\let\cl@chapter\relax \makeatother
\crefname{equation}{}{}
\newtheorem{assumption}{}
\crefname{assumption}{}{}
 \journalname{Numerische Mathematik}
\newcommand{\bfs}[1]{{\boldsymbol #1}}
\begin{document}

\title{Continuous Data Assimilation for Displacement in a Porous Medium\thanks{This work was funded by NSF grant DMS 1418838.}
}

%\titlerunning{Short form of title}        % if too long for running head

\author{H. Bessaih \and V. Ginting \and B. McCaskill}

\institute{H. Bessaih \at
              Mathematics \& Statistics, University of Wyoming, Laramie, WY, USA \\
              \email{bessaih@uwyo.edu}
           \and
           V. Ginting \at
                         Mathematics \& Statistics, University of Wyoming, Laramie, WY, USA \\
              \email{vginting@uwyo.edu}
                         \and
           B. McCaskill \at
                         Mathematics \& Statistics, University of Wyoming, Laramie, WY, USA \\
              \email{bmccaski@uwyo.edu}
}

%\date{Received: date / Accepted: date}
% The correct dates will be entered by the editor

\maketitle

\begin{abstract}
In this paper we propose the use of a continuous data assimilation algorithm for miscible flow models in a porous medium. In the absence of initial conditions for the model, observed sparse measurements are used to generate an approximation to the true solution. Under certain assumption of the sparse measurements and their incorporation into the algorithm it can be shown that the resulting approximate solution converges to the true solution at an exponential rate as time progresses. Various numerical examples are considered in order to validate the suitability of the algorithm. 
\keywords{Data Assimilation \and Miscible Flow \and Porous Media \and Nudging \and Downscaling}
% \PACS{PACS code1 \and PACS code2 \and more}
% \subclass{MSC code1 \and MSC code2 \and more}
\end{abstract}

\section{Introduction}
Computational and mathematical modeling is strongly motivated by the desire to predict future states of dynamical systems. However, the use of mathematical models to describe a system of interest becomes impractical if the initial state of the system can not be accurately described. For example, the construction of an accurate model for the spread of a contaminant as it infiltrates a porous subsurface is unrealistic if its concentration profile can not be measured at a sufficiently fine resolution. In contrast, it is reasonable to assume that a series of observational measurements at sparse spatial locations and times can be obtained. By properly handling this data, the current state of the dynamical system can be predicted.

The idea of feedback control can arguably be traced back to the notion of Luenberger observer (see for example \cite{MR3363684,MR1205006,1101300}). Utilization of this feedback control in combination with a Bayesian probabilistic method to weather prediction is seen in \cite{lorenc86}. Applications to data assimilation for problems modeled by Navier-Stokes equations were investigated, for example, in
\cite{MR3008183,MR3078113,2013:Titi}. The data assimilation is performed by
 introducing a set of sparse measurements as a feedback control term in the governing model. The resulting model problem then generates an approximate solution that tends toward the reference solution. Furthermore, under certain assumptions of regularity of the data it can be shown that this convergence will occur at an exponential rate. This approach provides a very practical and efficient way to find reasonable estimates of the current and future states of a dynamical system when sparse set of data measurements are available. A numerical validation study of this method for the 2D Navier-Stokes equations with periodic boundary conditions has previously been performed in \cite{Gesho}. In that study, a finite set of Fourier modes of the true solution are used to determine the feedback control term. Numerical experiments in this reference demonstrate that an exponential rate of convergence is achieved only when the value of the relaxation parameter for the feedback control term is chosen within an appropriate interval. This feature is consistent with theoretical results about the existence upper and lower bounds for this relaxation parameter.

In this work we incorporate the data assimilation algorithm introduced by Azouani, Olson, and Titi \cite{2013:Titi} with a model for miscible flow and transport to predict the spread of a contaminated fluid through a porous domain in the absence of information about initial condition. The underlying assumption is that sparse measurements of the contaminant concentration are available. In practice such measurements can be obtained in a relatively inexpensive fashion using direct current resistivity or frequency and time domain electromagnetic methods \cite{2010:Bear}. The proposed procedure gives a viable alternative to the reconstruction of the initial state of an aquifer from these measurements. Rather, the measurements can immediately be inserted into existing numerical schemes for computing the spread of the contaminant.  Furthermore, since the initial condition for the data assimilation algorithm can be chosen arbitrarily, it permits the use of smooth initial conditions in its simulation.  

The flow and transport model with which the data assimilation is integrated is in the form of a coupled system of elliptic-parabolic equations that governs the pressure and concentration of the contaminated fluid. The investigation combines theoretical and computational approaches to study this system. Although similar models have been considered (see for example \cite{2015:Farhat}), it is to the best of the authors knowledge that an application of a data assimilation technique to this particular model has not been previously made. The manner in which the system in coupled is highly nonlinear and presents many unique challenges in its analysis. In this paper, we prove the existence of a weak solution of the data assimilation algorithm and carry out several estimates in appropriate functional spaces.  When the relative permeability $\kappa$ is assumed to be Lipschitz continuous, we obtain better estimates on the pressure term, see Lemma
\ref{thm:wconv}. Moreover, under this assumption, good bounds on the data assimilation approximation 
$\hat{\theta}$  are obtained and the uniqueness of weak solutions for the data assimilation algorithm as well. We then use these estimates to establish the convergence in time of the assimilated solution to the true solution.

A numerical validation study the convergence is conducted using a first order-time marching scheme. The scheme utilizes a nodal polynomial interpolant at the sparse spatial scale to define the feedback control term. Theoretical results gathered from the analysis suggest that the convergence of the data assimilation solution depends on both the length of the sparse spatial scale and the chosen value of the relaxation parameter. Through numerical experiments, we examine  the effects of these parameters on the convergence rates of data assimilation algorithm. The applicability of the proposed methodology is further demonstrated in a prediction of a salt-water intrusion into a fresh water aquifer.

This paper is organized as follows. In \Cref{sec:preliminaries} we present the model problem of interest, define the notation and functional settings, and discuss the existence and properties of weak solutions to the model problem. In \Cref{sec:Data_Assimilation_Algorithm} we describe and analyze the proposed data assimilation algorithm for the model problem of interest. In \Cref{sec:existenceproof} existence of weak solutions to the data assimilation model are provided. In \Cref{sec:Error_Estimates} estimates for the convergence behavior of the assimilated solution towards the true model solution are established. In \Cref{Sec3.3}, we establish  further bounds on the data assimilation approximation $\hat{\theta}$ and  the uniqueness of the data assimilation approximation. In \Cref{sec:Numerical_Implementation} we develop a numerical scheme  to approximate solutions to the data assimilation algorithm. A numerical validation study of the proposed methodology is performed in \Cref{sec:Examples}. In \Cref{sec:example1,sec:example2} the dependence of the convergence rates of the data assimilation algorithm on the relaxation parameter of the proposed method are explored by simulating synthetic model problems with known solutions. In \Cref{sec:example3,sec:example4} the effect of the sparse data approximation scale on the resulting convergence rates is analyzed. Required $L^{\infty}$ estimates for the true concentration are proven in \Cref{sec:appendix1} using a maximum principle argument.

\section{Preliminaries}\label{sec:preliminaries}
The foundation for the present investigation is laid out in this section. \Cref{sec:modelproblem} discusses the model problem of interest followed by a description of functional spaces and notational conventions in \Cref{sec:funsetting}. The notion of weak solutions to the model problem is established in \Cref{sec:weaksolution}.

\subsection{Model Problem}\label{sec:modelproblem} 
The model problem under consideration is that of the miscible displacement of one incompressible fluid by another in a porous medium. The model which we adopt, developed by Peaceman and Rachford \cite{1972:bear,1962:Peaceman}, is described by the coupled elliptic-parabolic system

\begin{equation}\label{eq:modelproblem}
\begin{cases}
\displaystyle \phi \partial_t \theta - \nabla \cdot \left( D \nabla \theta - \boldsymbol{v}\theta \right)  + q_{\text{out}} \theta   = q_{\text{in}} \tilde{\theta}& \text{in } \Omega \times [0,T],\\
\displaystyle - \nabla \cdot \left( \kappa(\theta) \nabla p \right) = q_{\text{in}} - q_{\text{out}} & \text{in } \Omega \times [0,T],\\
\displaystyle \theta(\boldsymbol{x}, 0) = \theta_0(\boldsymbol{x}) & \text{in } \Omega,
\end{cases}
\end{equation}
where $\Omega$ that represents the aquifer is a bounded open domain in $\mathbb{R}^2$ with a Lipschitz boundary, $\partial \Omega$. The interval $[0,T] \subset \mathbb{R}$ is a time span of interest.  The differential system \cref{eq:modelproblem} is closed by imposing a set of boundary conditions on the boundary of $\Omega$, which is denoted by $\partial \Omega$.  This model dictates the behavior of two variables: the concentration of contaminated fluid, $\theta : \Omega \times [0,T] \to \mathbb{R}$ and the total fluid pressure, $p : \Omega \times [0,T] \to \mathbb{R}$. Relevant parameters for this system include $\phi$ the porosity of the medium, $D$ the diffusion coefficient, and $\kappa$ the relative permeability of the medium. Note that the system \eqref{eq:modelproblem} is coupled through the Darcy velocity of the fluid $\boldsymbol{v} = -\kappa(\theta) \nabla p$. The sums of the source terms for the contaminant and production well (sink) terms are given by the positive terms $q_{\text{in}} $ and $q_{\text{out}}$, respectively. The concentration of contaminated fluid entering the medium at the source terms is denoted by $\tilde{\theta}$. The concentration profile $\theta_0$ is the associated initial condition for this model problem.

Description of the proposed methodology is applied to the more general system
\begin{equation}\label{eq:genmodelproblem}
\begin{cases}
\displaystyle \partial_t \theta - \nabla \cdot \left( D \nabla \theta + \theta \kappa(\theta) \nabla p \right)  + q \theta = f \text{ in } \Omega \times [0,T],\\
\displaystyle - \nabla \cdot \left( \kappa(\theta) \nabla p \right) = g \text{ in } \Omega \times [0,T],\\
\displaystyle  \theta(\boldsymbol{x}, 0) = \theta_0(\boldsymbol{x}) \text{ in } \Omega,\\
\nabla \theta \cdot \boldsymbol{n} = 0, ~ \nabla p \cdot \boldsymbol{n} = 0,   \text{ on } \Gamma_{\text{N}} \times [0,T], \\
\theta = 0, ~ p = 0,  \text{ on } \Gamma_{\text{D}} \times [0,T],
\end{cases}
\end{equation}
where $g : \Omega \to \mathbb{R}$ and $f: \Omega \times [0,T] \to \mathbb{R}$. Specifically, this system is exactly \cref{eq:modelproblem} when $q = q_{\text{out}}, f = q_{\text{in}} \tilde{\theta}$, $g = q_{\text{in}}-q_{\text{out}}$, and the spatially dependent porosity function $\phi$ is ignored. The notation $\boldsymbol{n}$ is the unit normal vector pointing outward from $\partial \Omega$ and $\partial \Omega = \Gamma_{\text{N}} \cup \Gamma_{\text{D}}$, $\Gamma_{\text{N}} \cap \Gamma_{\text{D}} = \emptyset$ and $\Gamma_{\text{D}} \not= \emptyset$. While the proposed data assimilation algorithm is applicable for more realistic boundary conditions, the choice in the above system is to simplify presentation of the mathematical analysis. 

To give a brief overview (see \Cref{sec:Data_Assimilation_Algorithm} for a detailed exposition), the data assimilation for the above problem relies on solving
\begin{equation}\label{eq:das}
\begin{cases}
\displaystyle \partial_t \hat{\theta} - \nabla \cdot \left( D \nabla \hat{\theta} + \hat{\theta} \kappa(\hat{\theta}) \nabla \hat{p} \right)  + q \hat{\theta} + \mu P_{\hbar} (\hat{\theta}) = f +\mu P_{\hbar}(\theta) \text{ in } \Omega \times [0,T],\\
\displaystyle - \nabla \cdot \left( \kappa(\hat{\theta}) \nabla \hat{p} \right) = g \text{ in } \Omega \times [0,T],\\
\text{ same boundary conditions for $\hat{\theta}$ and $\hat{p}$},\\
\text{ a predetermined initial condition for $\hat{\theta}$.}
\end{cases}
\end{equation}
As elaborated earlier, the missing information is the initial condition of the concentration (denoted by $\theta_0$ in \cref{eq:genmodelproblem}), so as prescribed in \cref{eq:das}, the data assimilation imposes a pretty arbitrary initial condition for $\hat{\theta}$. To compensate for this missing information, the above procedure utilizes a  feedback that comes in the form of sparse measurement in space and time of the true concentration, which is represented by $P_\hbar(\theta)$. The operator $P_\hbar$ models the spatially sparse measurement of the concentration, which is assumed to be available at a scale of $\hbar$ in the porous medium $\Omega$.  The relaxation parameter, $\mu>0$, serves as a built-in tool to orient $\hat{\theta}$ to agree with the measured $\theta$ at the spatial locations and time level. Intuitively, as more available measurement information is fed to the above model, which is materialized by having increasingly smaller $\hbar$, it is expected that the produced $\hat{\theta}$ is increasingly similar to $\theta$. Achieving this behavior is dependent on the choice of $P_\hbar$ that exhibits a desirable approximation property (see \cref{eq:sparsebound}). As expounded later, the interplay between $\hbar$ and $\mu$ and a certain regularity condition of $\hat{p}$ will be crucial in establishing the convergence of $\hat{\theta}$ to $\theta$ as time progresses (see \Cref{thm:totconv}).

%----------------------------------------------------------------------------------------
\subsection{Functional Settings and Notations} \label{sec:funsetting}
%----------------------------------------------------------------------------------------
We employ the standard notations for the Sobolev space $W_p^k(\Omega)$, where $k \in \{0,1,\cdots\}$ and $p \in [1,\infty]$, whose norm is denoted by $\| \cdot \|_{k,p}$. Here $W^0_p(\Omega)$ is understood as the usual Lebesgue spaces over $\Omega$,  $L^p(\Omega)$.  In addition, we set $H^k(\Omega) = W^k_2(\Omega)$ and the corresponding norm to be $\| \cdot \|_k$. In the case of $k=0$, i.e., for $L^2(\Omega)$, we set $\| \cdot \|_0 = \| \cdot \|$.  The seminorms of all these spaces are defined similarly. We express the $L^2$ inner product of $u,v \in L^2(\Omega)$ with $\langle u, v \rangle := \int_{\Omega} u v ~\mathrm{d} \boldsymbol{x}$.
For the most part, we use the letter $C$ to denote constants which depend only on the domain $\Omega$. In cases where a precise bound is required, we drop this convention and specifically label the constant. For example, many of the estimates presented in this paper are established by using an interpolation inequality of Gagliardo-Nirenberg (see \cite{nirenberg59} and p. 313 of \cite{brezis2010functional}):
given $\Omega \subset \mathbb{R}^2$ and $u \in H^1(\Omega)$, then
\begin{equation} \label{eq:GNI}
\| u \|_{0,a} \le C_{\text{gn}} \| u \|^\zeta_{0,c} \, \| u \|^{1-\zeta}_1, ~~1\le c \le a < \infty, ~~\zeta = \frac{c}{a},
\end{equation}
where $C_{\text{gn}}$ depends on $\Omega$.
On the special case that $a=4$ and $c=2$, then we recover from it the Ladyzhenskaya's inequality \cite{2013:McCormick}. Since $\Omega \subset \mathbb{R}^2$, we will use a Sobolev embedding $H^1(\Omega) \hookrightarrow L^\sigma(\Omega)$ for $\sigma>2$ and $\| u \|_{0,\sigma} \le C_\text{emb} \| u \|_1$ (see for example p. 85 of \cite{200379}).
 Moreover, if a constant depends on some parameter that is not directly related to the domain $\Omega$ we emphasize this dependency with the functional notation $C(\cdot)$.

We denote by $H^1_\text{D} := \left\{ w \in H^1(\Omega) : w = 0 \text{ on } \Gamma_{\text{D}} \right\}$ with the usual norm for $H^1(\Omega)$. Note that the $H^1$ norm and semi-norm are equivalent for elements of $H^1_\text{D}$: there exists a constant $b_0 > 0$ such that $b_0 \|u\|_1 \le \|\nabla u\| \le \|u\|_1$ for $u \in H^1_{\text{D}}$. Let $\{e_k\}_{k=1}^{\infty} \subseteq  H^1_{\text{D}}$ be an orthonormal basis of $L^2(\Omega)$ that is also orthogonal in $H^1(\Omega)$. The construction of such a basis can be attained from normalizing eigenpairs of the Laplace differential operator over $\Omega$. Throughout this paper we use this basis to construct Galerkin approximations of relevant functions. To this end, for a fixed integer $m$, we denote by $\mathcal{W}^m \subset H^1_{\text{D}}$ the $\text{span} \{ e_k \}_{k=0}^{m}$, and define $\Pi_m$ to be the orthogonal projection of $H^1_{\text{D}}$ onto $\mathcal{W}^m$.

For a Sobolev space $X$ over $\Omega$ we define $L^p_T(X)$ to be the space of measurable functions $\boldsymbol{u} : [0,T] \to X$ such that for $0\le p < \infty$,
\begin{equation}
\|\boldsymbol{u}\|_{L^p_T(X)} = \left( \int_0^T \|\boldsymbol{u}(t)\|_X^p ~\mathrm{d}t \right)^{\frac{1}{p}} < \infty \text{ and }
\|\boldsymbol{u}\|_{L^\infty_T(X)} = \underset{0 \le t \le T}{\text{ess\,sup}} \|\boldsymbol{u}(t)\|_X  < \infty. 
\end{equation}
Furthermore, for a Banach space $Y$ we define $C_T^{\gamma}(Y)$ to be the space of H\"older continuous functions $\boldsymbol{u} : [0,T] \to Y$ equipped with norm
\begin{equation}
\|\boldsymbol{u}\|_{C_T^{\gamma}(Y)} = \sup_{\substack{s \in [0,T]}} \| \boldsymbol{u}(s) \|_Y + \sup_{\substack{s,t \in [0,T]\\ {s \not=t}}}\dfrac{\|\boldsymbol{u}(s) - \boldsymbol{u}(t)\|_Y}{(s-t)^{\gamma}}.
\end{equation}

\subsection{Weak Solution} \label{sec:weaksolution}
A description of weak solution of \cref{eq:modelproblem} is as follows.
\begin{definition}\label{def:weaksolution}
The pair $(\theta, p) \in L^{\infty}_T (L^2(\Omega)) \cap L_T^{2} (H^1_{\text{D}}) \times L^{\infty}_T (H^1_{\text{D}})$ is a weak solution of the model problem \cref{eq:genmodelproblem} if $\partial_t \theta \in L_T^{2}\left(H^{-1}(\Omega)\right)$, $0 \le \theta(\boldsymbol{x},t) \le 1$ for almost every $(\boldsymbol{x},t) \in \Omega \times [0,T]$, and 
\begin{subnumcases}{} \label{eq:genvarform}
\displaystyle \langle  \partial_t \theta, \psi \rangle + A(\theta, p,\psi; \theta)  =  \langle f, \psi \rangle ~\forall \psi \in H^1_{\text{D}}, \label{eq:subnuma}  \\
\displaystyle B(p,\varphi; \theta) = \langle g, \varphi \rangle ~\forall \varphi \in H^1_{\text{D}}, \label{eq:subnumb}   \\
\displaystyle \langle {\theta} (\cdot, 0), \psi \rangle = \langle {\theta}_0, \psi \rangle, ~\forall \psi \in H^1_{\text{D}}, \label{eq:subnumc}
\end{subnumcases}
 where
\begin{subequations}\label{eq:genvarforms}
\begin{align}
A(v,u,w;z) &= \langle D \nabla v, \nabla w \rangle + \langle v\kappa(z) \nabla u , \nabla w \rangle + \langle q v , w \rangle,  \label{eq:genvarformsa} \\
 B(u,w;z) &= \left\langle \kappa(z) \nabla u, \nabla w \right\rangle \label{eq:genvarformsb}
\end{align}
\end{subequations}
%{}
\end{definition}

The following assumptions on the input data are imposed to guarantee the existence of a solution according to \Cref{def:weaksolution}: 
\begin{itemize}
\item[] \begin{assumption}\label{as:init}  $\theta_0 \in L^{2}(\Omega)$ with $0 \le \theta_0(\boldsymbol{x}) \le 1$ for almost every $\boldsymbol{x} \in \Omega$, \end{assumption}
\item[] \begin{assumption}\label{as:dif} $D \in L^{\infty}(\Omega)$ with $0 < D_* < D(\boldsymbol{x}) \le D^*$  for almost every $\boldsymbol{x} \in \Omega$, \end{assumption}
\item[] \begin{assumption}\label{as:kappa}  $\kappa \in C^{0}(\mathbb{R}) \cap L^{\infty}(\mathbb{R})$ with $0 < \kappa_* \le \kappa(\zeta) \le \kappa^*$ for almost every $\zeta \in \mathbb{R}$, \end{assumption}
\item[] \begin{assumption}\label{as:q}  $q \in L^2(\Omega)$, \end{assumption} 
\item[] \begin{assumption}\label{as:g}  $g \in L^2(\Omega)$, \end{assumption}
\item[] \begin{assumption}\label{as:f}  $f \in L^{2}_T( L^2(\Omega))$, \end{assumption}
\item[] \begin{assumption}\label{eq:infbdassumptions} $g(\boldsymbol{x}) + 2q(\boldsymbol{x}) \ge 0$ and $g(\boldsymbol{x}) + q(\boldsymbol{x}) \ge f(\boldsymbol{x},t) \ge 0$ for almost every $(\boldsymbol{x},t) \in \Omega \times [0,T]$. \end{assumption}
\end{itemize}

\begin{theorem}\label{thm:modelexistence}
Under assumptions \cref{as:kappa,as:dif,as:g,as:f,as:q,as:init,eq:infbdassumptions} there exists a weak solution $(\theta, p)$ in the sense of \Cref{def:weaksolution}. Furthermore 
\begin{align}
& \|p\|_{L_T^{\infty}\left(H^1_{\text{D}}\right)} \le (\kappa_* b_0^2)^{-1}\|g\|\label{eq:truepbound}\\
&\| \theta \|_{L_T^{\infty}\left(L^2(\Omega)\right)}^2 \le  \left( \|\theta_0 \|^2 + \|f\|_{L_T^2(L^2(\Omega))}^2\right) e^{T} \label{eq:truethetainfbound}\\
&\|\theta\|_{L^{2}_T (H^1_{\text{D}})}^2 \le (D_* b_0^2)^{-1}  \left(\|\theta_0\|^2 + \|f\|_{L^{2}_T\left(L^2(\Omega)\right)}^2\right) e^{T}.\label{eq:truethetal2bound}
\end{align}
\end{theorem}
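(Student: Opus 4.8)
The plan is to construct the solution by a Faedo--Galerkin scheme in the concentration variable, derive the three a priori bounds uniformly in the discretization, and then pass to the limit by compactness. Fix $m$ and seek $\theta_m(t)=\sum_{k=1}^m c_k(t)e_k\in\mathcal W^m$. The coupling is resolved by solving the pressure equation \emph{exactly in the full space} $H^1_{\text{D}}$ rather than merely in $\mathcal W^m$: for each frozen $\theta_m(t)$, \cref{as:kappa} makes $B(\cdot,\cdot;\theta_m(t))$ bounded and coercive on $H^1_{\text{D}}$ (coercivity constant $\kappa_* b_0^2$), so Lax--Milgram produces a unique $p_m(t)\in H^1_{\text{D}}$ solving \eqref{eq:subnumb}. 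Testing this identity with $p_m$ and using $b_0\|p_m\|_1\le\|\nabla p_m\|$ immediately gives $\|\nabla p_m\|\le(\kappa_*b_0)^{-1}\|g\|$, hence \eqref{eq:truepbound}, uniformly in $t$ and $m$. Continuity of the map $\theta_m\mapsto p_m$ (from continuity of $\kappa$ together with uniform ellipticity) makes the Galerkin system $\langle\partial_t\theta_m,e_j\rangle+A(\theta_m,p_m,e_j;\theta_m)=\langle f,e_j\rangle$ a Carath\'eodory ODE in $c=(c_k)$, so a local absolutely continuous solution exists on some $[0,t_m)$.

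The heart of the argument is the concentration energy estimate, obtained by testing \eqref{eq:subnuma} with $\psi=\theta_m$. The diffusion term is bounded below by $D_*\|\nabla\theta_m\|^2$, but the convection term $\langle\theta_m\kappa(\theta_m)\nabla p_m,\nabla\theta_m\rangle$ and the reaction term $\langle q\theta_m,\theta_m\rangle$ are sign-indefinite. The key observation is that $\theta_m\nabla\theta_m=\tfrac12\nabla(\theta_m^2)$, so the convection term equals $\tfrac12\langle\kappa(\theta_m)\nabla p_m,\nabla(\theta_m^2)\rangle$; since $p_m$ was solved in the \emph{full} space $H^1_{\text{D}}$ and $\theta_m^2\in H^1_{\text{D}}$ (a finite combination of the smooth basis functions, vanishing on $\Gamma_{\text{D}}$), I may legitimately insert $\varphi=\theta_m^2$ into \eqref{eq:subnumb} to obtain $\tfrac12\langle g,\theta_m^2\rangle$. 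Adding the reaction term then produces $\tfrac12\langle g+2q,\theta_m^2\rangle\ge0$ by \cref{eq:infbdassumptions}. Discarding this nonnegative contribution and estimating $\langle f,\theta_m\rangle\le\tfrac12\|f\|^2+\tfrac12\|\theta_m\|^2$ gives $\tfrac{d}{dt}\|\theta_m\|^2+2D_*\|\nabla\theta_m\|^2\le\|\theta_m\|^2+\|f\|^2$; Gronwall's inequality then yields \eqref{eq:truethetainfbound}, and integrating the retained diffusion term in time while invoking the $L^\infty_T(L^2)$ bound just obtained yields \eqref{eq:truethetal2bound}. These bounds are uniform in $m$, so $|c(t)|$ cannot blow up and the Galerkin solution is global on $[0,T]$.

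With the uniform bounds in hand I would extract limits. The estimates place $\theta_m$ in a bounded set of $L^\infty_T(L^2)\cap L^2_T(H^1_{\text{D}})$ and $p_m$ in a bounded set of $L^\infty_T(H^1_{\text{D}})$. A uniform bound on $\partial_t\theta_m$ in a negative-order space follows from the equation; here the convection term is again the delicate one, and I would control $\theta_m\kappa(\theta_m)\nabla p_m$ either through the pointwise bound $0\le\theta_m\le1$ (enforced, if necessary, by truncating the convection coefficient $\theta\mapsto\theta\kappa(\theta)$ at the Galerkin level and later verifying the truncation is inactive) or through a Gagliardo--Nirenberg estimate based on \eqref{eq:GNI}. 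Aubin--Lions then provides a subsequence with $\theta_m\to\theta$ strongly in $L^2_T(L^2)$ and a.e., $\theta_m\rightharpoonup\theta$ weakly in $L^2_T(H^1_{\text{D}})$, $\partial_t\theta_m\rightharpoonup\partial_t\theta$, and $p_m\rightharpoonup p$ weakly in $L^2_T(H^1_{\text{D}})$.

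The final step is passing to the limit in the nonlinear coupling, which is the second main obstacle. Because $\kappa$ is continuous and bounded, the a.e. convergence of $\theta_m$ with dominated convergence gives $\kappa(\theta_m)\to\kappa(\theta)$ strongly in every $L^r_T(L^r)$, $r<\infty$; combined with the weak convergence of $\nabla p_m$ this identifies the weak limit of $\kappa(\theta_m)\nabla p_m$ and lets me pass to the limit in \eqref{eq:subnumb}, while the additional strong factor $\theta_m\to\theta$ handles the convection term in \eqref{eq:subnuma} by a weak-times-strong product argument (testing against fixed modes weighted by smooth temporal functions, then using density). The initial condition \eqref{eq:subnumc} is recovered from $\partial_t\theta\in L^2_T(H^{-1})$, which embeds the limit into $C([0,T];L^2(\Omega))$ in the weak sense; the three bounds survive by weak lower semicontinuity of the norms; and the pointwise constraint $0\le\theta\le1$ is obtained by the separate maximum-principle argument deferred to \Cref{sec:appendix1}. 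I expect the principal difficulties to be precisely the two convection-term steps: converting that term into the sign-definite quantity $\tfrac12\langle g+2q,\theta_m^2\rangle$ via the full-space pressure solve and \cref{eq:infbdassumptions} at the estimate stage, and the weak-times-strong limit passage at the compactness stage.
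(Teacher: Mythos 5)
Your proposal is correct, and its core estimates coincide exactly with the paper's: solving the pressure equation in the \emph{full} space $H^1_{\text{D}}$ (so that $\theta_m^2$ is an admissible test function), converting convection plus reaction into $\tfrac12\langle g+2q,\theta_m^2\rangle\ge 0$ via \cref{eq:infbdassumptions}, testing the elliptic equation with the pressure itself to get \cref{eq:truepbound}, and Gronwall for the remaining two bounds; these are precisely the computations of \Cref{lem:elliptic} and \Cref{lem:range} specialized to $\mu=0$. The architecture, however, is genuinely different. The paper omits the proof but points to \Cref{sec:existenceproof}, where the nonlinear coupling is resolved by a Schauder fixed-point argument: the concentration $z$ is frozen inside $\kappa$, the elliptic problem is solved by Lax--Milgram, the resulting \emph{linear} parabolic problem is solved by Galerkin, and all of this is packaged as a map $S$ whose continuity and relatively compact range (via H\"older-in-time estimates and Vishik's compactness theorem) produce a fixed point. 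You instead run a monolithic nonlinear Galerkin scheme with $\kappa(\theta_m)$ retained in the coefficients, get the ODE system from Carath\'eodory theory (uniqueness is not needed, which matters since $\kappa$ is merely continuous), and handle the coupling at the limit-passage stage by Aubin--Lions together with a.e. convergence and dominated convergence. The paper's route buys linear sub-problems and localizes all nonlinearity in one topological argument; yours avoids the fixed-point machinery at the price of a nonlinear ODE system and a more delicate time-derivative bound. On that last point, one caution: uniformly in $m$, Gagliardo--Nirenberg alone only places $\theta_m\kappa(\theta_m)\nabla p_m$ in $L^{4/3}(\Omega)$ (since $\theta_m\in L^4$, $\nabla p_m\in L^2$), so $\partial_t\theta_m$ lands in a space strictly weaker than $L^2_T\left(H^{-1}(\Omega)\right)$ --- still enough for Aubin--Lions, but \Cref{def:weaksolution} demands $\partial_t\theta\in L^2_T\left(H^{-1}(\Omega)\right)$ for the limit. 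You must therefore either invoke the Meyers estimate \cref{eq:lala} of \Cref{lem:elliptic}, $\|\nabla p_m\|_{0,r}\le C(r)\|g\|$ with $r>2$, as the paper does both in \Cref{lem:holdercontinuity} and in its maximum-principle appendix \Cref{sec:appendix1}, or recover the $H^{-1}$ bound a posteriori from the equation once $0\le\theta\le1$ is known; your truncation device also accomplishes this. Since you anticipated the issue and one of your proposed fixes goes through, this is a repair, not a gap.
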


 In the interest of brevity, formal proof of \Cref{thm:modelexistence} is omitted, with a note that it can be achieved in a similar manner as analysis presented in \Cref{sec:existenceproof}.
The model problem \cref{eq:modelproblem} is well studied with extensive literature devoted to establishing the existence of solutions that satisfy \Cref{def:weaksolution}. Let us emphasize an important feature of the model that makes the analysis completely different. When $\kappa=\kappa(\boldsymbol{x})$, that is a function of $\boldsymbol{x}$ only, then the model is only one way coupled.  Model with this setting has been studied by Droniou and Talbot \cite{2014:Talbot}. On the other hand, when $\kappa=\kappa(\theta)$, then the two PDEs are strongly coupled. Strong solutions for such a system have been found to exist for the stationary model (see for example \cite{1991:Mikelic}).  Chen and Ewing have performed existence studies for both single and two phase flow models in petroleum reservoirs \cite{1999:Chen}. In addition, Fabrie and Gallou\"{e}t have found similar results with more general assumptions on $f$ and $g$ \cite{2000:Fabrie}. It is worth noting that  under the assumptions that have been utilized in the present investigation, uniqueness of the weak solution to the model remains open. Required regularity estimates are difficult to obtain due to the persistent nonlinear coupling.  By using the maximum principle, it was shown (see for example \cite{1999:Chen}) that  $0 \le \theta \le 1$ almost everywhere in $\Omega \times [0,T]$ for the general functions $f$ and $g$.

\section{A Data Assimilation Algorithm} \label{sec:Data_Assimilation_Algorithm}
\setcounter{equation}{0}
\setcounter{figure}{0}
This section devises a methodology that can be used to approximate $\theta$ when $\theta_0(\boldsymbol{x})$ is entirely unknown. The algorithm crucially relies on a set of collected sparse spatial and temporal measurements of $\theta$, which is incorporated to the governing model problem through a control term. To this end let $\mu$ be a positive relaxation parameter and $P_{\hbar}:H^1(\Omega) \to L^2(\Omega)$ be a linear operator which interpolates its input function at a length scale $\hbar > 0$. The approximation $(\hat{\theta}, \hat{p}):\Omega \times [0,T] \to (\mathbb{R},\mathbb{R})$ is set to satisfy the following formulation.

\begin{definition}\label{def:daweaksolution}{}
Given an arbitrary $\hat{\theta}_0 \in L^2(\Omega)$, find $(\hat{\theta}, \hat{p}) \in L_T^{\infty}\left(L^2(\Omega)\right) \cap L_T^{2}\left(H^1_{\text{D}}\right) \times L_T^{\infty} (H^1_{\text{D}})$ that is governed by
\begin{subnumcases}{}\label{eq:davarform}
\displaystyle \langle  \partial_t \hat{\theta}, \psi \rangle + A(\hat{\theta}, \hat{p}, \psi; \hat{\theta})  =  \langle f - \mu P_{\hbar}(\hat{\theta}-\theta) , \psi \rangle ~\forall \psi \in H^1_{\text{D}},  & \label{eq:davarform1}\\
\displaystyle B(\hat{p},\varphi;\hat{\theta}) = \langle g, \varphi \rangle ~\forall \varphi \in H^1_{\text{D}}, & \label{eq:davarform2}\\
\displaystyle \langle \hat{\theta} (\cdot, 0), \psi \rangle = \langle \hat{\theta}_0, \psi \rangle ~\forall \psi \in H^1_{\text{D}} \label{eq:davarform3},
\end{subnumcases}
and for almost every $t \in [0,T]$, with $\partial_t \hat{\theta} \in L_T^{2}\left(H^{-1} (\Omega)\right)$.
% Moreover, $\hat{\theta} \in C_T^{\gamma} (H^{-2}(\Omega))$ for ${0 \le \gamma \le \frac{1}{2}}$.
\end{definition}

Convergence of $\hat{\theta}$ towards $\theta$ as $t\to \infty$ in the appropriate metric hinges on having some constraints on the quality of the interpolation and the length scale of $\hbar$. In the present investigation, we employ a nodal-based piecewise polynomial of first degree that is continuous over $\Omega$. It has been established that under some relatively flexible assumptions (see for example \cite{2008:Brenner}) that 
\begin{equation}\label{eq:sparsebound}
\|P_{\hbar}(u) - u\| \le c_0 \hbar^k \|u \|_k, ~\forall u \in H^k(\Omega), ~k=1,2.
\end{equation}
It is also known that piecewise polynomial interpolation of this type preserves the sign
of the function that it interpolates, namely,
\begin{equation} \label{eq:Phsign}
P_\hbar (u) \ge 0 \text{ in } \Omega \text{ if } u \ge 0 \text{ in } \Omega.
\end{equation}
Furthermore, for the purpose of analysis of the data assimilation procedure, we require that
\begin{equation}\label{eq:muhbarbound}
\mu c_0^2 \hbar^2 < D_*.
\end{equation}
Other assumptions on $P_{\hbar}$ have been used (see e.g., \cite{2015:Farhat}), but approximation properties similar to \cref{eq:sparsebound} and \cref{eq:Phsign} are readily met by a large class of interpolation operators, such as splines (see for example \cite{DeBoor:1428148} and B-splines (see for example \cite{hollig03}).
The well-posedness of \Cref{def:daweaksolution} is stated next, whose proof is laid out in \Cref{sec:existenceproof}.

\begin{theorem}\label{thm:existence}
Provided that assumptions \cref{as:kappa,as:kappa,as:dif,as:g,as:f,as:q,as:init,eq:muhbarbound,eq:infbdassumptions,eq:sparsebound} are satisfied there exists a $(\hat{\theta}, \hat{p})$ in the sense of \Cref{def:daweaksolution}. Moreover, 
\begin{align}
& \|\hat{p}\|_{L_T^{\infty}\left(H^1_{\text{\em D}}\right)} \le (\kappa_* b_0^2)^{-1} \|g\|\label{eq:hatpbound}\\
&\| \hat{\theta}\|_{L_T^{\infty}\left(L^2(\Omega)\right)}^2 \le \beta  e^{\mu T}\label{eq:hatthetainfbound}\\
&\|\hat{\theta}\|_{L^{2}_T (H^1_{\text{\em D}})}^2 \le b_0^{-2}(D_* - \mu c_0^2 {\hbar}^2)^{-1} \beta e^{\mu T}, \label{eq:hatthetal2bound}
\end{align}
where
\begin{equation}\label{eq:alphabetadef}
\begin{aligned}
 \beta &= \| \hat{\theta}_0\|^2 + \frac{1}{\mu} \|f\|_{L_T^2\left(L^2(\Omega)\right)}^2
  +\mu (b_0 c_0 \hbar + 1)^2 (D_* b_0^2)^{-1} \left(\|\theta_0\|^2 + \|f\|_{L^{2}_T\left(L^2(\Omega)\right)}^2 \right) e^T.
\end{aligned}
\end{equation}
\end{theorem}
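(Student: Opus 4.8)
The plan is to establish existence through a semi-Galerkin scheme in which only the concentration is discretized while the pressure is solved exactly in $H^1_{\text{D}}$; this is precisely what makes the cancellation of the transport term below admissible. Fix $m$ and seek $\hat\theta_m(t)\in\mathcal W^m$. For each such $\hat\theta_m$, the pressure equation \cref{eq:davarform2} with the form \cref{eq:genvarformsb} is linear in $\hat p$, and \cref{as:kappa} together with the norm equivalence on $H^1_{\text{D}}$ makes $B(\cdot,\cdot;\hat\theta_m)$ coercive and bounded, so Lax--Milgram produces a unique $\hat p_m=\hat p(\hat\theta_m)\in H^1_{\text{D}}$ that depends continuously on $\hat\theta_m$ (by continuity of $\kappa$). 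Substituting this into \cref{eq:davarform1} and testing against $e_j$, $j=1,\dots,m$, reduces the problem to a system of ODEs for the coefficients of $\hat\theta_m$ with continuous right-hand side; Peano's theorem yields a local-in-time solution, and the a priori bounds below extend it to all of $[0,T]$.

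The a priori estimates are where the explicit constants of the statement are produced. Testing \cref{eq:davarform2} with $\varphi=\hat p_m$ and using $\kappa\ge\kappa_*$ and $b_0\|\hat p_m\|_1\le\|\nabla\hat p_m\|$ gives \cref{eq:hatpbound}, uniformly in $m$ and $t$. For the concentration I would test \cref{eq:davarform1} with $\psi=\hat\theta_m$. The diffusion term is bounded below by $D_*\|\nabla\hat\theta_m\|^2$, and the troublesome transport term $\langle\hat\theta_m\kappa(\hat\theta_m)\nabla\hat p_m,\nabla\hat\theta_m\rangle$ is rewritten as $\tfrac12\langle\kappa(\hat\theta_m)\nabla\hat p_m,\nabla(\hat\theta_m^2)\rangle=\tfrac12 B(\hat p_m,\hat\theta_m^2;\hat\theta_m)$; since $\hat\theta_m^2\in H^1_{\text{D}}$ (here the exact, non-projected pressure is essential), \cref{eq:davarform2} converts it into $\tfrac12\langle g,\hat\theta_m^2\rangle$, which combines with the reaction term $\langle q\hat\theta_m,\hat\theta_m\rangle$ into $\tfrac12\langle g+2q,\hat\theta_m^2\rangle\ge0$ by \cref{eq:infbdassumptions} and is discarded. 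The feedback term is split as $-\mu\langle P_\hbar\hat\theta_m,\hat\theta_m\rangle+\mu\langle P_\hbar\theta,\hat\theta_m\rangle$; writing $P_\hbar\hat\theta_m=\hat\theta_m-(\hat\theta_m-P_\hbar\hat\theta_m)$ and invoking \cref{eq:sparsebound} with $k=1$ and Young's inequality produces a dissipative $-\mu\|\hat\theta_m\|^2$, a gradient contribution of size $\mu c_0^2\hbar^2\|\nabla\hat\theta_m\|^2$ absorbed into the diffusion thanks to \cref{eq:muhbarbound} (this is the origin of the factor $D_*-\mu c_0^2\hbar^2$ in \cref{eq:hatthetal2bound}), and the forcing $\mu\|P_\hbar\theta\|^2$, which by \cref{eq:sparsebound} is controlled by $\|\theta\|_{H^1_{\text{D}}}$ up to the factor $(b_0c_0\hbar+1)$ and then by the true-solution bound \cref{eq:truethetal2bound}. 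Combined with $\langle f,\hat\theta_m\rangle\le\tfrac1{2\mu}\|f\|^2+\tfrac\mu2\|\hat\theta_m\|^2$, a Grönwall argument yields \cref{eq:hatthetainfbound}, with $\beta$ from \cref{eq:alphabetadef} assembling exactly these contributions; integrating the surviving gradient term in time then gives \cref{eq:hatthetal2bound}.

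Finally I would pass to the limit $m\to\infty$. A dual bound $\partial_t\hat\theta_m\in L^2_T(H^{-1})$ is read off from \cref{eq:davarform1} using the $H^1$-stability of the projection onto $\mathcal W^m$: every term except the transport term is immediately controlled by $\|\psi\|_1$ from the bounds already in hand, while $\langle\hat\theta_m\kappa(\hat\theta_m)\nabla\hat p_m,\nabla\psi\rangle$ is handled by noting that $\nabla\hat p_m$ enjoys a uniform higher-integrability (Meyers-type) estimate in $L^{2+\delta}$---uniform because the ellipticity ratio $\kappa^*/\kappa_*$ is fixed---which, together with the two-dimensional embedding $H^1\hookrightarrow L^\sigma$ for every finite $\sigma$, places $\hat\theta_m\nabla\hat p_m$ in $L^2_T(L^2)$. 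The uniform bounds give weak-$*$ limits $\hat\theta$, $\hat p$ and, via Aubin--Lions, strong convergence $\hat\theta_m\to\hat\theta$ in $L^2_T(L^p)$ for all finite $p$, hence a.e. along a subsequence; continuity of $\kappa$ then gives $\kappa(\hat\theta_m)\to\kappa(\hat\theta)$ a.e. with the uniform bound $\kappa^*$, so dominated convergence passes \cref{eq:davarform2} to the limit and the same higher integrability with a weak--strong pairing passes the transport term in \cref{eq:davarform1}, the initial condition \cref{eq:davarform3} following from weak continuity in time. I expect the main obstacle to be exactly this nonlinear transport term: under the mere continuity of $\kappa$ in \cref{as:kappa} it cannot be estimated by brute force, so its treatment rests on the structural cancellation with the pressure equation for the energy estimates and on the uniform higher integrability of $\nabla\hat p$ for both the time-derivative bound and the passage to the limit.
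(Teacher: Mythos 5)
Your proposal is correct in its essentials, but it takes a genuinely different route from the paper. The paper does not run a semi-Galerkin scheme on the coupled nonlinear system; instead it decouples first: it defines a map $S$ on $L_T^2(L^2(\Omega))$ by freezing the concentration $z$ in the coefficients, so that the pressure solves $B(u,\varphi;z)=\langle g,\varphi\rangle$ (Lax--Milgram plus Meyers, as in your first step) and $Sz$ solves a \emph{linear} parabolic problem with drift $\kappa(z)\nabla u$. The Galerkin approximation, the energy estimates (with the same cancellation $\langle S_mz\,\kappa(z)\nabla u,\nabla S_mz\rangle=\tfrac12\langle g,(S_mz)^2\rangle$, the same splitting of the feedback term, and the same Gr\"onwall argument producing exactly the constant $\beta$), and the H\"older-in-time bound are all carried out for this linear problem, where the passage to the limit $m\to\infty$ is straightforward; the nonlinear coupling is then restored by proving that $S$ is continuous with relatively compact range and invoking Schauder's fixed point theorem. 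Your scheme keeps the full coupling $\kappa(\hat\theta_m)$ at the Galerkin level, which forces you to use Peano (the ODE field is only continuous, since $\kappa$ is merely continuous--- justifying that continuity already requires the elliptic stability and dominated-convergence argument that the paper uses to prove continuity of $S$) and to identify the nonlinear weak limits of $\kappa(\hat\theta_m)\nabla\hat p_m$ and of the transport term directly via Aubin--Lions, a.e.\ convergence, and the uniform Meyers bound. What your route buys is structural economy: a single limit passage, no fixed-point theorem, and no need to prove relative compactness of the range of $S$. What the paper's route buys is that all hard estimates are performed on a linear equation, with the nonlinearity quarantined inside the Schauder step; the technical ingredients (Lax--Milgram, Meyers, the pressure-equation cancellation --- valid in both cases because the pressure equation holds against all of $H^1_{\text{D}}$, not just $\mathcal{W}^m$ --- dominated convergence for $\kappa$, and compactness in $L_T^2(L^2(\Omega))$) are the same in both arguments, merely redistributed. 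Your constants and both a priori bounds \cref{eq:hatthetainfbound,eq:hatthetal2bound} come out identically, so the proposal is a valid alternative proof.
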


%%%%%%%%%%%%%%%%%%%%%%%%%%%%%%%%%%%%%%%%%%%%%%%%%%%%%%%%%%%%%%%%%%%%%%%%%%%%%%%%%%%%%%%%%%%%%%%%%%%%%%%%%%%%%%%%%%%%%%%%%%%%%%%%%%%%%%%%%%%%%%%%%%%%%%
%%%%%%%%%%%%%%%%%%%%%%%%%%%%%%%%%%%%%%%%%%%%%%%%%%%%%%%%%%%%%%%%%%%%%%%%%%%%%%%%%%%%%%%%%%%%%%%%%%%%%%%%%%%%%%%%%%%%%%%%%%%%%%%%%%%%%%%%%%%%%%%%%%%%%%
%--------------------------------------------------------------------------------------------
\subsection{Existence of \texorpdfstring{$(\hat{p}, \hat{\theta})$}{TEXT}}\label{sec:existenceproof}
%--------------------------------------------------------------------------------------------

This section gives a formal proof of \Cref{thm:existence} by applying Schauder's fixed point theorem to the mapping $S: L_T^{2}\left(L^2(\Omega)\right) \to L^2_T(L^2(\Omega))$ defined via
\begin{subnumcases}{\label{eq:sepvarform}}
\displaystyle {\langle \partial_t S z, \psi \rangle +  A(Sz,u,\psi;z) = \langle f - \mu P_{\hbar}(Sz-\theta) , \psi \rangle}, \label{eq:sepparabolic}\\
\displaystyle B(u,\varphi;z) = \langle g, \varphi \rangle,  \label{eq:sepellptic}\\
\displaystyle \langle S z(\cdot,0), \psi \rangle = \langle \hat{\theta}_0, \psi \rangle
\end{subnumcases}
for every $\psi \in H^1_{\text{D}}$, $\varphi \in H^1_{\text{D}}$ and for almost every $t \in [0,T]$. In fact, we will show that the range of $S$ is characterized by a subset of $L_T^2\left(L^2(\Omega)\right)$. In particular, it will be shown that the evaluation of each form in \eqref{eq:sepparabolic} is well defined since $Sz \in L_T^{\infty}\left(L^2(\Omega)\right) \cap L_T^{2}\left(H^1_{\text{D}}\right)$. By showing that $S$ meets the criteria of Schauder's fixed point theorem, one can conclude that $S$ has a fixed point. By construction, this fixed point is the function $\hat{\theta}$ which satisfies \cref{eq:davarform}.

Observe that the map defined by \eqref{eq:sepvarform} depends on the profile of $u$ through its associated Darcy velocity. Existence of $u$ satisfying \eqref{eq:sepellptic} is established in the following lemma.

\begin{lemma}\label{lem:elliptic}
Given ${z \in L_T^{2}\left(L^{2}(\Omega)\right)}$ and $g \in L^2(\Omega)$, there is ${u \in L_T^\infty(H^1_{\text{\em{D}}})}$ satisfying \eqref{eq:sepellptic} and
\begin{equation}
\|u\|_1 \le (\kappa_* b_0^2)^{-1} \|g\|. 
\end{equation}
 Furthermore,
there is a positive number $r_0$ with $2<r_0\le \infty$ such that $u \in L_T^\infty(W^1_r(\Omega))$ for $r \in [2,r_0)$, and
\begin{equation} \label{eq:lala}
\| u \|_{1,r} \le C(r) \| g \|,
\end{equation}
where $r_0$ only depends on $\kappa_*$, $\kappa^*$, $\Omega \cup \Gamma_{\text{N}}$ and $C(r)$ depends on  $\kappa_*$, $\kappa^*$, $\Omega$, and $r$.
\end{lemma}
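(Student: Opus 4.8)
The plan is to treat the elliptic problem \cref{eq:sepellptic} pointwise in time, since the right-hand side $g$ is time-independent and the coefficient $\kappa(z(\cdot,t))$ inherits the uniform bounds of \cref{as:kappa} for almost every $t$. First I would fix $t \in [0,T]$ and regard $B(\cdot,\cdot;z(t))$ as a bilinear form on $H^1_{\text{D}} \times H^1_{\text{D}}$. Boundedness is immediate from $\kappa(z) \le \kappa^*$, while coercivity follows from $\kappa(z) \ge \kappa_*$ together with the norm equivalence $b_0 \|u\|_1 \le \|\nabla u\|$ on $H^1_{\text{D}}$, which yields $B(u,u;z(t)) \ge \kappa_* b_0^2 \|u\|_1^2$. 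The Lax--Milgram theorem then produces a unique $u(t) \in H^1_{\text{D}}$ solving \cref{eq:sepellptic}.

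The $H^1$ bound is obtained by testing with $\varphi = u(t)$ and combining coercivity with the Cauchy--Schwarz inequality:
\begin{equation*}
\kappa_* b_0^2 \|u(t)\|_1^2 \le B(u(t),u(t);z(t)) = \langle g, u(t) \rangle \le \|g\| \, \|u(t)\|_1,
\end{equation*}
which gives $\|u(t)\|_1 \le (\kappa_* b_0^2)^{-1} \|g\|$. Because this estimate is uniform in $t$, the map $t \mapsto u(t)$ lands in $L^\infty_T(H^1_{\text{D}})$ once its measurability in $t$ is confirmed; the latter follows from the continuity of $\kappa$ in \cref{as:kappa} together with the continuous dependence of the Lax--Milgram solution on its coefficient, verified along a sequence of simple functions approximating $z$.

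For the higher-integrability claim, the central tool is a Meyers-type estimate for divergence-form elliptic equations with bounded measurable coefficients. The idea is that the self-improving property of the gradient, captured through a reverse H\"older inequality and Gehring's lemma, produces an exponent $r_0 > 2$ such that $\nabla u(t) \in L^r(\Omega)$ for every $r \in [2,r_0)$, together with the quantitative bound $\|u(t)\|_{1,r} \le C(r)\|g\|$. The decisive feature is that $r_0$ and $C(r)$ depend only on the ellipticity constants $\kappa_*, \kappa^*$ (equivalently, on the ratio $\kappa^*/\kappa_*$) and on the geometry of $\Omega$ together with its Neumann portion $\Gamma_{\text{N}}$, but not on the particular profile $z$; this is precisely why the uniform bounds in \cref{as:kappa} are indispensable. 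The uniformity in $t$ of these constants then upgrades the pointwise regularity to $u \in L^\infty_T(W^1_r(\Omega))$.

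The step I expect to be the main obstacle is this last higher-integrability estimate, for two reasons. First, one must verify that the exponent $r_0$ is genuinely independent of $z$, which hinges on running the reverse-H\"older/Gehring machinery using only the uniform ellipticity constants and never the pointwise values of $\kappa(z)$. Second, the mixed Dirichlet--Neumann boundary conditions require care: the cleanest versions of Meyers' result are stated for pure Dirichlet or pure Neumann data, and the mixed setting forces the dependence of $r_0$ on the boundary geometry $\Omega \cup \Gamma_{\text{N}}$ recorded in the statement. Controlling the estimate near the interface between $\Gamma_{\text{D}}$ and $\Gamma_{\text{N}}$ is where the technical weight of the argument lies.
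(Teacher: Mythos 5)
Your proposal is correct and follows essentially the same route as the paper: Lax--Milgram applied pointwise in time with coercivity from \cref{as:kappa} and the norm equivalence on $H^1_{\text{D}}$, the energy estimate by testing with $u$ itself, and a Meyers-type higher-integrability estimate whose exponent $r_0$ and constant $C(r)$ depend only on $\kappa_*$, $\kappa^*$, and the geometry of $\Omega \cup \Gamma_{\text{N}}$. The only step you leave implicit --- converting the Meyers bound $\|u\|_{1,r} \le \tilde{C}(r)\|F\|_{-1,r}$ into a bound by $\|g\|$ --- is handled in the paper by a one-line H\"older estimate, $|\langle g,\varphi\rangle| \le \|g\|\,|\Omega|^{\frac{r-2}{2r}}\|\varphi\|_{0,r}$, and the mixed Dirichlet--Neumann difficulty you flag is resolved there by citing versions of Meyers' estimate established precisely for that boundary configuration.
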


\begin{proof}
Let $t\in [0,T]$ be arbitrary and the problem is to find $u = u(\cdot,t) \in H^1_{\text{D}}$ that is governed by \eqref{eq:sepellptic}.
Indeed, $B(\cdot, \cdot; z)$ is bounded and coercive in $H^1_{\text{D}}$ by \cref{as:kappa} and the Poincar\'{e} inequality, and \cref{as:g} guarantees that $\langle g, \cdot \rangle$ is bounded in $H^1_{\text{D}}$. Since $t$ was chosen arbitrarily, this variational formulation is satisfied almost everywhere in $[0,T]$, and thus the existence and uniqueness of this $u$ is guaranteed by the Lax-Milgram theorem \cite{2008:Brenner}.  Furthermore, replacing $\varphi$ in \eqref{eq:sepellptic} by $u \in H^1_{\text{D}}$, using \cref{as:kappa} and applying the Cauchy-Schwarz inequality result in $\kappa_* \| \nabla u\|^2 \le \langle g, \varphi \rangle =  \|g\| \, \|u\|$.
Noting that the $H^1$ seminorm and $H^1$ norm are equivalent over $H^1_{\text{D}}$, and $\|u\| \le \|u\|_1$ by definition, it is concluded that
 \begin{equation}\label{eq:genpbound}
\|u\|_1 \le (\kappa_* b_0^2)^{-1} \|g\|. 
 \end{equation}

Furthermore,  by Meyers' type estimate (see for example \cite{MR159110,MR990595,MR1789483}), there is a positive number $r_0$ with $r_0>0$ such that if $u \in H^1_\text{D}$ is governed by
$B(u,\varphi ; z) = F(\varphi)$ for every $\varphi \in H^1_\text{D}$, and $F \in W^{-1}_r(\Omega)$ for $r \in [2,r_0)$, then $u \in W^1_r(\Omega)$. In particular, there is a $C(r)$ that depends only on $\kappa_*$, $\kappa^*$, $\Omega$, and $r$ such that
\begin{equation} \label{eq:gugu}
\| u \|_{1,r} \le \tilde{C}(r) \| F \|_{-1,r}.
\end{equation}
The constant $r_0$ only depends on $\kappa_*$, $\kappa^*$, $\Omega \cup \Gamma_{\text{N}}$. In our situation, $F(\varphi) = \langle g, \varphi \rangle$ where $g \in L^2(\Omega)$. By Cauchy-Schwarz and H\"older's inequalities
\begin{equation}
|F(\varphi)| \le \| g \| \, \| \varphi \| \le \| g \| \, |\Omega|^{\frac{r-2}{2r}} \| \varphi \|_{0,r},
\end{equation}
which implies that $\| F \|_{-1,r} \le  \| g \| \, |\Omega|^{\frac{r-2}{2r}}$. Using this in \cref{eq:gugu} gives \cref{eq:lala}. The whole proof is complete.
\qed
\end{proof}

The above bounds are true for any $z\in L_T^{2}\left(L^{2}(\Omega)\right)$ and holds for almost every $t \in [0,T]$. Consequently, the bounds for $p$ and $\hat{p}$ presented in \cref{eq:truepbound}  can be viewed as a consequence of \cref{eq:genpbound}. Also, bounds for $p$ and $\hat{p}$ in the fashion of \cref{eq:lala} will be used in the forthcoming analysis. We remark that utilization of Meyers' type estimate in the analysis miscible displacement in a porous medium has been done in \cite{MR1185634}.

It turns out the Meyers' type estimate in the preceding lemma in combination with Lipschitz continuity of $\kappa$ can be used to establish a quantification of discrepancy of the pressure in terms of discrepancy in the coupling caused by $\kappa$. The following lemma states this result.

\begin{lemma}\label{thm:wconv}
Assume that $\kappa$ is Lipschitz continuous, i.e.,
\begin{equation} \label{eq:LipsK}
|\kappa(\zeta_1) - \kappa(\zeta_2)| \le L_\kappa |\zeta_1 - \zeta_2|, ~~\zeta_1,\zeta_2 \in \mathbb{R}.
\end{equation}
Given $z_j \in H^1_\text{D}$, $j=1,2$, let $u_j \in H^1_\text{D}$ be governed by
\begin{equation}
B(u_j,\varphi ; z_j) = \langle g, \varphi \rangle, ~~ \varphi \in H^1_\text{D}.
\end{equation}
 Then
\begin{equation} \label{eq:erru}
\| \nabla (u_1 - u_2) \| \le C_{\boldsymbol{w}} C(r) \, \| g \| \, \| z_1 - z_2 \|^{2/s}  \, \| \nabla (z_1 - z_2) \|^{(s-2)/s},
\end{equation}
and
\begin{equation} \label{eq:errw}
\| \kappa(z_1)\nabla u_1 - \kappa(z_2) \nabla u_2 \| \le C_{\boldsymbol{w}} C(r) \, \| g \| \, \| z_1 - z_2 \|^{2/s}  \, \| \nabla (z_1 - z_2) \|^{(s-2)/s},
\end{equation}
where $\frac{1}{r} + \frac{1}{s} = \frac{1}{2}$, and $C_{\boldsymbol{w}}$ depends on $\kappa_*$, $\kappa^*$, $L_\kappa$, $b_0$, $C_{\text{\em{gn}}}$, and $C(r)$ is as in \cref{eq:lala} of \Cref{lem:elliptic}.
\end{lemma}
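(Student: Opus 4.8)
The plan is to derive both bounds from a single subtracted weak formulation, exploiting an algebraic identity that separates the gradient difference from the coefficient difference, combined with a three-factor Hölder inequality whose exponents are dictated precisely by the relation $\frac{1}{r}+\frac{1}{s}=\frac{1}{2}$. First I would subtract the two identities $B(u_j,\varphi;z_j)=\langle g,\varphi\rangle$ and rewrite the integrand using
\[
\kappa(z_1)\nabla u_1 - \kappa(z_2)\nabla u_2 = \kappa(z_1)\nabla(u_1-u_2) + (\kappa(z_1)-\kappa(z_2))\nabla u_2,
\]
so that $\langle \kappa(z_1)\nabla(u_1-u_2),\nabla\varphi\rangle = -\langle(\kappa(z_1)-\kappa(z_2))\nabla u_2,\nabla\varphi\rangle$ for all $\varphi\in H^1_\text{D}$. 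Testing with $\varphi=u_1-u_2$ and invoking the lower bound $\kappa_*$ from \cref{as:kappa} yields coercivity on the left, namely $\kappa_*\|\nabla(u_1-u_2)\|^2$, while the Lipschitz hypothesis \cref{eq:LipsK} controls the right-hand side by $L_\kappa\int_\Omega |z_1-z_2|\,|\nabla u_2|\,|\nabla(u_1-u_2)|\,\mathrm{d}\boldsymbol{x}$.

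The crucial step is to distribute this triple product by Hölder's inequality with exponents $(s,r,2)$: the factor $\nabla u_2$ is placed in $L^r$, where $r$ is the Meyers exponent of \Cref{lem:elliptic}, the test-function gradient $\nabla(u_1-u_2)$ sits in $L^2$, and the remaining factor $z_1-z_2$ lands in $L^s$, which is admissible exactly because $\frac{1}{s}=\frac{1}{2}-\frac{1}{r}$. After cancelling one power of $\|\nabla(u_1-u_2)\|$ I obtain $\kappa_*\|\nabla(u_1-u_2)\|\le L_\kappa\|z_1-z_2\|_{0,s}\|\nabla u_2\|_{0,r}$. I would then bound $\|\nabla u_2\|_{0,r}\le C(r)\|g\|$ via \cref{eq:lala}, and interpolate $\|z_1-z_2\|_{0,s}$ through the Gagliardo--Nirenberg inequality \cref{eq:GNI} with $a=s$, $c=2$, $\zeta=2/s$, which produces precisely the exponents $2/s$ and $(s-2)/s$; converting the residual $H^1$-norm to $\|\nabla(z_1-z_2)\|$ by the norm equivalence on $H^1_\text{D}$ and collecting the constants $L_\kappa/\kappa_*$, $C_{\text{gn}}$, and the appropriate power of $b_0$ into $C_{\boldsymbol{w}}$ establishes \cref{eq:erru}.

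For \cref{eq:errw} I would reuse the same splitting. The term $\kappa(z_1)\nabla(u_1-u_2)$ is bounded in $L^2$ by $\kappa^*\|\nabla(u_1-u_2)\|$ through the upper bound in \cref{as:kappa}, hence by the already-proven \cref{eq:erru}. The remaining piece $\|(\kappa(z_1)-\kappa(z_2))\nabla u_2\|$ is treated by the Lipschitz bound followed by a two-factor Hölder inequality applied to $\int_\Omega |z_1-z_2|^2|\nabla u_2|^2\,\mathrm{d}\boldsymbol{x}$ with conjugate exponents $\frac{r}{r-2}$ and $\frac{r}{2}$; since $s=\frac{2r}{r-2}$ this again returns the product $\|z_1-z_2\|_{0,s}\|\nabla u_2\|_{0,r}$, to which the identical Gagliardo--Nirenberg and Meyers estimates apply. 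Summing the two contributions gives \cref{eq:errw}, with the constant now depending on $\kappa^*$ as well, consistent with the statement.

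I expect the main obstacle to be purely the bookkeeping of the Hölder exponents: the whole argument hinges on recognizing that the Meyers integrability exponent $r$ and the interpolation exponent $s$ interlock through $\frac{1}{r}+\frac{1}{s}=\frac{1}{2}$, so that the triple product closes with the test function in $L^2$, and that the Gagliardo--Nirenberg exponent $\zeta=c/a=2/s$ matches the target powers in \cref{eq:erru} and \cref{eq:errw} exactly. The only subtleties to verify are that $s>2$ (so that the $L^s$ norm is genuinely stronger than the $L^2$ norm and \cref{eq:GNI} is applicable) and that $z_1-z_2\in H^1_\text{D}$, which legitimizes the norm equivalence used in the final step.
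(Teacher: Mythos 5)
Your proposal is correct and follows essentially the same route as the paper's proof: an energy (coercivity) argument on the subtracted weak formulations, H\"older with the exponent pairing $\frac{1}{r}+\frac{1}{s}=\frac{1}{2}$, the Meyers bound \cref{eq:lala}, and Gagliardo--Nirenberg plus the $b_0$ norm equivalence, followed by the add-and-subtract triangle inequality for \cref{eq:errw}. The only cosmetic differences are that you attach the coefficient difference to $\nabla u_2$ where the paper attaches it to $\nabla u_1$ (a symmetric relabeling), and you apply a single three-factor H\"older step where the paper uses Cauchy--Schwarz followed by a two-factor H\"older; both yield the identical constant structure.
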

\begin{proof}
To simplify the presentation, denote
$$
\varepsilon_{\boldsymbol{v}} = \kappa(z_1)\nabla u_1 - \kappa(z_2) \nabla u_2,~~ \varepsilon_u = u_1 - u_2, ~~\varepsilon_z = z_1 - z_2.
$$
By H\"{o}lder equality and applying the Lipschitz continuity of $\kappa$,
\begin{equation} \label{eq:rrr}
\| (\kappa(z_1) - \kappa(z_2) )\nabla u_1 \| \le \| \kappa(z_1) - \kappa(z_2) \|_{0,s} \, \| \nabla u_1 \|_{0,r} \le L_\kappa \| \varepsilon_z \|_{0,s} \, \| \nabla u_1 \|_{0,r},
\end{equation}
where $\frac{1}{r} + \frac{1}{s} = \frac{1}{2}$.
Furthermore, by Gagliardo-Nirenberg inequality \cref{eq:GNI} and Poincare inequality (since each $z_j \in H^1_{\text{D}})$,
\begin{equation} \label{eq:etagn}
\| \varepsilon_z \|_{0,s} \le C_{\text{gn}} \| \varepsilon_z \|^{2/s}  \, \| \varepsilon_z \|_1^{(s-2)/s} \le C_{\text{gn}} \,b_0^{(2-s)/s} \, \| \varepsilon_z \|^{2/s}  \, \| \nabla \varepsilon_z \|^{(s-2)/s}. 
\end{equation}
Applying \cref{eq:etagn} and \cref{eq:lala} to \cref{eq:rrr} gives
\begin{equation} \label{eq:sss}
\| (\kappa(z_1) - \kappa(z_2) )\nabla u_1 \| \le L_\kappa C(r) C_{\text{gn}} \,b_0^{(2-s)/s} \| g \|  \, \| \varepsilon_z \|^{2/s}  \, \| \nabla \varepsilon_z \|^{(s-2)/s}.
\end{equation}

Since $B(u_1,\varphi ; z_1) = B(u_2,\varphi ; z_2)$ for every $\varphi \in H^1_{\text{D}}$, 
$$
 \begin{aligned}
 B(\varepsilon_u,\varphi ; z_2) &=  B(u_1,\varphi ; z_2) - B(u_2,\varphi ; z_2) \\
 &= B(u_1,\varphi ; z_2) - B(u_1,\varphi ; z_1)\\
 & = \langle (\kappa(z_2) - \kappa(z_1) ) \nabla u_1, \nabla \varphi \rangle.
  \end{aligned}
 $$
 Therefore, by choosing $\varphi = \varepsilon_u$ in the above identity and using \cref{as:kappa}, the following estimate can be performed: 
 $$
 \begin{aligned}
\kappa_* \| \nabla \varepsilon_u \|^2 &\le \langle (\kappa(z_2) - \kappa(z_1)) \nabla u_1, \nabla \varepsilon_u \rangle
\le \|(\kappa(z_2) - \kappa(z_1)) \nabla u_1\| \, \|\nabla \varepsilon_u \|,
\end{aligned}
 $$
and thus $ \| \nabla \varepsilon_u \| \le \kappa_*^{-1}   \|(\kappa(z_2) - \kappa(z_1)) \nabla u_1\|$ so that its combination with \cref{eq:sss} yields
\begin{equation}
\| \nabla \varepsilon_u \| \le \kappa_*^{-1} \, L_\kappa \, C_{\text{gn}} \,b_0^{(2-s)/s} \, C(r)  \, \| g \|  \, \| \varepsilon_z \|^{2/s}  \, \| \nabla \varepsilon_z \|^{(s-2)/s}.
\end{equation}

Finally, by adding and subtracting $\kappa(z_2)\nabla u_1$ in $\varepsilon_{\boldsymbol{v}}$, using the triangle inequality and \cref{eq:sss},
\begin{equation}\label{eq:wbound}
\begin{aligned}
\|\varepsilon_{\boldsymbol{v}}\| &\le \| (\kappa(z_1) - \kappa(z_2) )\nabla u_1 \| + \|\kappa(z_2) \nabla \varepsilon_u\| \\
&\le \| (\kappa(z_1) - \kappa(z_2) )\nabla u_1 \| + \kappa^* \| \nabla \varepsilon_u\|\\
&\le (1 + \kappa^*  \kappa_*^{-1} ) \| (\kappa(z_1) - \kappa(z_2) )\nabla u_1 \|\\
&\le (1 + \kappa^*  \kappa_*^{-1} ) L_\kappa  C_{\text{gn}} \,b_0^{(2-s)/s} \, C(r) \, \| g \|  \, \| \varepsilon_z \|^{2/s}  \, \| \nabla \varepsilon_z \|^{(s-2)/s}.
\end{aligned}
\end{equation}
The proof is completed by setting
$$
C_{\boldsymbol{w}} = \max\{ \kappa_*^{-1},  1 + \kappa^*  \kappa_*^{-1} \} \, L_\kappa  C_{\text{gn}} \,b_0^{(2-s)/s}.
$$
\qed
\end{proof}

The next investigation is on the construction of a sequence of Galerkin approximations of $S z$ in finite-dimensional subspace of $H^1_{\text{D}}$. Set $\hat{\theta}_0^m = \sum_{k=0}^m \langle \hat{\theta}_0, e_k \rangle e_k$ and note that it is an element of the space $\mathcal{W}^m$ defined in \Cref{sec:preliminaries}. For $z \in L_T^2\left(L^2(\Omega)\right)$ let $S_m: L_T^{2}\left(L^2(\Omega)\right) \to L_T^{2}\left(L^2(\Omega)\right)$ be defined via
\begin{equation}\label{eq:smdef}
\begin{cases}
&\langle \partial_t S_m z, \psi \rangle +  A(S_mz,u,\psi;z)~\mathrm{d}s =   \langle f - \mu P_{\hbar}(S_m z-\theta) , \psi \rangle\\
&\langle S_m z(\cdot,0), \psi \rangle = \langle \hat{\theta}_0^m, \psi \rangle,
\end{cases}
\end{equation}
for all $\psi \in \mathcal{W}^m$ and for almost every $t \in [0,T]$, where $u$ is the solution of \eqref{eq:sepellptic}. Recall from \Cref{sec:funsetting} that $\mathcal{W}^m \subset H^1_{\text{D}}$. The existence of $S_m z$ is established in the following lemma. 

\begin{lemma}\label{lem:galerkinapprox}
Given $z \in L_T^2\left(L^2(\Omega)\right)$ there is ${S_m z} :\Omega \times [0,T] \to \mathbb{R}$ satisfying \cref{eq:smdef}. 
\end{lemma}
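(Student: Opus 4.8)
The plan is to pass from the variational identity \cref{eq:smdef} to an initial value problem for a linear system of ordinary differential equations, and then invoke Carath\'eodory's existence theory. Writing the unknown in the Galerkin basis as $S_m z(\cdot,t) = \sum_{k=0}^m c_k(t)\, e_k$ with coefficient vector $\bfs{c}(t)=(c_0(t),\dots,c_m(t))^{\top}$ and testing \cref{eq:smdef} against $\psi = e_j$ for $j=0,\dots,m$, the orthonormality of $\{e_k\}$ in $L^2(\Omega)$ reduces the time-derivative term to $\dot c_j$, and collecting the rest yields
\[
\dot{\bfs{c}}(t) + \bigl(M(t)+N\bigr)\bfs{c}(t) = \bfs{b}(t), \qquad \bfs{c}(0) = \bigl(\langle\hat{\theta}_0,e_0\rangle,\dots,\langle\hat{\theta}_0,e_m\rangle\bigr)^{\top},
\]
where $M_{jk}(t) = \langle D\nabla e_k,\nabla e_j\rangle + \langle e_k\kappa(z(t))\nabla u(t),\nabla e_j\rangle + \langle q e_k,e_j\rangle$, the feedback matrix $N_{jk} = \mu\langle P_{\hbar}(e_k),e_j\rangle$ is constant in time, and $b_j(t) = \langle f(t),e_j\rangle + \mu\langle P_{\hbar}(\theta(t)),e_j\rangle$; the initial data are read off from $\hat{\theta}_0^m=\sum_{k=0}^m\langle\hat{\theta}_0,e_k\rangle e_k$. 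Here $u=u(\cdot,t)$ is the elliptic field supplied by \Cref{lem:elliptic}.

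The crux is to check that this system falls within the Carath\'eodory framework, i.e.\ that the entries of $M$ and $\bfs{b}$ are measurable and integrable on $[0,T]$ (the constant matrix $N$ and the constant diffusion/reaction entries of $M$ need no attention). The only genuinely time-dependent and potentially delicate entry is $t\mapsto\langle e_k\kappa(z(t))\nabla u(t),\nabla e_j\rangle$. To bound it I would use \cref{as:kappa} to control $\kappa$ by $\kappa^*$ and, decisively, the Meyers-type estimate \cref{eq:lala} of \Cref{lem:elliptic}, which gives a uniform-in-$t$ bound $\|u(t)\|_{1,r}\le C(r)\|g\|$ for some $r\in(2,r_0)$. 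Applying H\"older's inequality with exponents $q,r,2$ satisfying $\tfrac1q=\tfrac12-\tfrac1r$ (so $q>2$) and the planar Sobolev embedding $H^1(\Omega)\hookrightarrow L^q(\Omega)$ to the fixed basis functions gives, for almost every $t$,
\[
\bigl|\langle e_k\kappa(z(t))\nabla u(t),\nabla e_j\rangle\bigr| \le \kappa^*\,\|e_k\|_{0,q}\,\|\nabla u(t)\|_{0,r}\,\|\nabla e_j\| \le \kappa^* C_{\text{emb}}\,C(r)\,\|g\|\,\|e_k\|_1\,\|e_j\|_1,
\]
so each entry of $M$ lies in $L^{\infty}(0,T)\subset L^1(0,T)$. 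For the forcing, \cref{as:f} gives $\langle f(\cdot),e_j\rangle\in L^2(0,T)$, while boundedness of the linear operator $P_{\hbar}$ combined with $\theta\in L^2_T(H^1_{\text D})$ (from \Cref{thm:modelexistence}) gives $\langle P_{\hbar}(\theta(\cdot)),e_j\rangle\in L^2(0,T)$; hence $\bfs{b}\in L^2(0,T)^{m+1}\subset L^1(0,T)^{m+1}$.

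With these integrability properties established, Carath\'eodory's theorem for linear systems produces an absolutely continuous solution $\bfs{c}$ on $[0,T]$; since the system is linear, a Gr\"onwall estimate on $|\bfs{c}(t)|$ rules out finite-time blow-up, so the solution exists on the whole interval, and from $\dot{\bfs{c}} = \bfs{b}-(M+N)\bfs{c}$ with $\bfs{b}\in L^2$, $M\in L^{\infty}$ one even gets $\dot{\bfs{c}}\in L^2(0,T)$. Setting $S_m z=\sum_{k=0}^m c_k e_k$ then yields an element of $\mathcal{W}^m$ for each $t$ that satisfies \cref{eq:smdef}, with $\partial_t S_m z=\sum_k\dot c_k e_k$ inheriting the $L^2(0,T)$ time regularity. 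The main obstacle I anticipate is the measurability-and-integrability verification for the coupling coefficient above: one must first confirm that $t\mapsto u(t)$ is measurable as a map into $W^1_r(\Omega)$, so that the measurability of $z$ in time propagates through the elliptic solve \cref{eq:sepellptic} and through the continuous nonlinearity $\kappa$, and then extract the uniform spatial bound, which is exactly where the higher integrability of $\nabla u$ from \Cref{lem:elliptic} is indispensable. Everything else is routine linear ODE theory.
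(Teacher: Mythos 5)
Your proposal is correct, and its skeleton is identical to the paper's proof: expand $S_m z$ in the basis $\{e_k\}_{k=0}^m$, use $L^2$-orthonormality to collapse the time-derivative term, and reduce \cref{eq:smdef} to a linear initial value problem for the coefficient vector, with the same matrix entries $A(e_k,u,e_j;z) + \mu\langle P_\hbar(e_k),e_j\rangle$ and forcing $\langle f+\mu P_\hbar(\theta),e_j\rangle$ that the paper writes down. Where you genuinely diverge is the existence step. The paper simply cites ``standard existence theory for first order IVPs'' (a Picard--Lindel\"of-type result), which presumes coefficients at least continuous in $t$; but since $z$ is only an element of $L_T^2(L^2(\Omega))$, the coupling entry $t\mapsto\langle e_k\kappa(z(t))\nabla u(t),\nabla e_j\rangle$ is merely measurable, so your Carath\'eodory framework is in fact the appropriate tool, and your verification of its hypotheses --- the $L^\infty(0,T)$ bound on the coupling entries via \cref{eq:lala} of \Cref{lem:elliptic}, H\"older, and the planar Sobolev embedding (precisely the chain the paper itself uses later in \Cref{lem:holdercontinuity}), together with $\bfs{b}\in L^2(0,T)$ from \cref{as:f} and the stability of $P_\hbar$ --- supplies rigor where the paper is terse. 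What your write-up buys is a proof that actually covers the regularity class in which $z$ lives; what it still owes is the point you yourself flag: the measurability of $t\mapsto u(t)$ as a $W^1_r(\Omega)$-valued (or at least $H^1_{\text{D}}$-valued) map, which is needed before the $L^\infty$ bound on the coupling entries can be combined with measurability to conclude integrability. This can be closed by noting that the solution map $z(t)\mapsto u(t)$ of \cref{eq:sepellptic} is continuous from $L^2(\Omega)$ into $H^1_{\text{D}}$ (the argument appears in the paper's continuity lemma for $S$, via a.e.\ convergence of $\kappa(z_n)$ and dominated convergence), so composing it with the strongly measurable map $t\mapsto z(t)$ preserves measurability; the paper's own proof is silent on this point as well, so this is a gap you share with, rather than introduce relative to, the original.
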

\begin{proof}
Let $\alpha_{k}: [0,T] \to \mathbb{R}$, $k=1 \ldots m$, and set
$(S_m z)(\boldsymbol{x}, t) = \sum_{k=0}^m \alpha_{k}(t) e_k(\boldsymbol{x})$.
The intention is to characterize $\alpha_k$ in such a way that $S_m z$ satisfies \cref{eq:smdef} over $\mathcal{W}^m$. Using $e_i$ as test functions in \cref{eq:smdef} and applying the orthogonality of this basis yields a system of ODEs for $\boldsymbol{\alpha} = [ \alpha_{0}, \dots \alpha_{m} ]^{\top}$:
\begin{equation}\label{eq:integraleq}
\boldsymbol{\alpha}^\prime(t) + M \boldsymbol{\alpha} = \boldsymbol{f}, ~~
\boldsymbol{\alpha}(0) =  \boldsymbol{\alpha}_0,
\end{equation}
where $\boldsymbol{\alpha}_0 = [ \langle\hat{\theta}_0, e_0 \rangle, \cdots, \langle\hat{\theta}_0, e_m \rangle]^\top$, and the entries of $M$ and $\boldsymbol{f}$ are given by
\begin{equation}
M_{ik} = A(e_k,u,e_i;z) + \left\langle  \mu P_{\hbar}(e_k), e_i \right\rangle
\text{ and }  f_i = \langle f + \mu P_{\hbar}(\theta), e_i \rangle, ~ i,k=1,\cdots,m.
\end{equation}
In accordance with standard existence theory for first order IVPs (see e.g. \cite{2017:Meiss}), there exists a unique continuous ${\boldsymbol{\alpha} : [0,T] \to \mathbb{R}^{m+1}}$ governed by \cref{eq:integraleq}.
\qed
\end{proof}

\begin{lemma}\label{lem:range}
$S_m z$ in \eqref{eq:smdef} belongs to $L_T^{\infty}\left(L^2(\Omega)\right) \cap L_T^2\left(H^1_{\text{D}}\right)$ for $z \in L_T^2\left(L^2(\Omega)\right)$.
\end{lemma}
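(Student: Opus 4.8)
The plan is to establish the stated membership through an energy estimate, which also yields the $m$-uniform bound needed later for Schauder's theorem and the passage to the limit. Since $S_m z(t) = \sum_{k=0}^m \alpha_k(t) e_k$ with each $\alpha_k$ continuous by \Cref{lem:galerkinapprox}, the map $t \mapsto S_m z(t)$ is continuous into the finite-dimensional space $\mathcal{W}^m$, on which all Sobolev norms are equivalent; this already gives $S_m z \in L^\infty_T(L^2(\Omega)) \cap L^2_T(H^1_{\text{D}})$ for each fixed $m$. The substantive content, and what I would actually prove, is a bound independent of $m$. To this end I would take $\psi = S_m z$ in \cref{eq:smdef} (admissible since $S_m z(t) \in \mathcal{W}^m$) and write $w := S_m z$ for brevity, so that $\langle \partial_t w, w\rangle = \tfrac{1}{2}\tfrac{d}{dt}\|w\|^2$ and, by \cref{as:dif}, the leading part of $A(w,u,w;z)$ satisfies $\langle D\nabla w, \nabla w\rangle \ge D_* \|\nabla w\|^2$.

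The work lies in controlling the remaining terms so that a positive multiple of $\|\nabla w\|^2$ survives on the left. The trilinear coupling term $\langle w\,\kappa(z)\nabla u, \nabla w\rangle$ is the main obstacle: since $z$ lives only in $L^2_T(L^2(\Omega))$, one cannot control $\nabla u$ in $L^2$ alone against the product $w\,\nabla w$. Here I would invoke the Meyers-type improvement from \Cref{lem:elliptic}, namely $\|\nabla u\|_{0,r} \le C(r)\|g\|$ for some $r \in (2, r_0)$, together with Hölder's inequality with exponents $(r, s, 2)$ where $\tfrac1r + \tfrac1s = \tfrac12$, to obtain $|\langle w\,\kappa(z)\nabla u,\nabla w\rangle| \le \kappa^* \|w\|_{0,s}\,\|\nabla u\|_{0,r}\,\|\nabla w\|$. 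Applying the Gagliardo-Nirenberg inequality \cref{eq:GNI} and the norm equivalence on $H^1_{\text{D}}$ exactly as in \cref{eq:etagn} bounds $\|w\|_{0,s}$ by $C\|w\|^{2/s}\|\nabla w\|^{(s-2)/s}$, so the coupling term is dominated by $C\|g\|\,\|w\|^{2/s}\|\nabla w\|^{2-2/s}$; since the exponent $2 - 2/s < 2$, Young's inequality splits off an arbitrarily small multiple of $\|\nabla w\|^2$ plus a lower-order $\|w\|^2$ term. The zeroth-order term $\langle q w, w\rangle$ is handled similarly using Ladyzhenskaya's inequality (the case $a=4$, $c=2$ of \cref{eq:GNI}) and \cref{as:q}, again absorbing into $\|\nabla w\|^2$ at the cost of a $\|w\|^2$ term. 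For the feedback I would split $P_\hbar(w) = w + (P_\hbar(w) - w)$ and use the approximation property \cref{eq:sparsebound} with $k=1$ together with the smallness condition \cref{eq:muhbarbound}; this is precisely where the coefficient of $\|\nabla w\|^2$ is reduced to the positive quantity $D_* - \mu c_0^2\hbar^2$. The forcing is estimated by Cauchy-Schwarz and Young (using \cref{as:f}), with $\|P_\hbar(\theta)\| \le \|\theta\| + c_0\hbar\|\theta\|_1$ controlled via the true-solution bounds of \Cref{thm:modelexistence}.

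Collecting these estimates produces a differential inequality of the form $\tfrac{d}{dt}\|w\|^2 + (D_* - \mu c_0^2\hbar^2)\|\nabla w\|^2 \le K \|w\|^2 + F(t)$, where $K$ depends only on the fixed data ($\kappa_*, \kappa^*, C(r), \|g\|, \|q\|, C_{\text{gn}}, b_0$) and $F \in L^1(0,T)$ collects the forcing contributions. Gronwall's inequality then gives the $L^\infty_T(L^2(\Omega))$ bound, and integrating the surviving $\|\nabla w\|^2$ term over $[0,T]$ gives the $L^2_T(H^1_{\text{D}})$ bound; both are independent of $m$ and match the form of $\beta$ in \cref{eq:alphabetadef}. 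The main difficulty throughout is the coupling term: without the improved $L^r$ integrability of $\nabla u$ supplied by \Cref{lem:elliptic}, the product $w\,\nabla u\,\nabla w$ cannot be closed, so the Meyers estimate is indispensable here.
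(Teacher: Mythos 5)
Your proof is correct, but at the decisive step it takes a genuinely different route from the paper's. You control the trilinear coupling term $\langle S_m z\,\kappa(z)\nabla u,\nabla S_m z\rangle$ by H\"older with the Meyers exponent from \Cref{lem:elliptic}, Gagliardo--Nirenberg, and Young, absorbing a small multiple of $\|\nabla S_m z\|^2$ into the diffusion, and you treat $\langle q S_m z, S_m z\rangle$ similarly via Ladyzhenskaya. The paper instead exploits the structure of the coupled system: since $\nabla (S_m z)^2 = 2 S_m z \nabla S_m z$ and $u$ solves \eqref{eq:sepellptic}, testing the elliptic equation with $(S_m z)^2$ gives that the coupling term equals $\tfrac{1}{2}\langle g,(S_m z)^2\rangle$, so its sum with the $q$-term is $\tfrac{1}{2}\langle g+2q,(S_m z)^2\rangle \ge 0$ by the sign assumption \cref{eq:infbdassumptions} and is simply discarded. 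The trade-off: the paper's argument needs neither Meyers nor Gagliardo--Nirenberg in this lemma and produces exactly the constants $\beta$ and $e^{\mu T}$ of \cref{eq:alphabetadef} that are quoted in \Cref{thm:existence}, whereas your argument dispenses with the sign hypothesis \cref{eq:infbdassumptions} at this step but pays with a Gr\"onwall rate involving $\|g\|^s$ and $\|q\|^2$, so your final bounds do not literally ``match the form of $\beta$'' as you claim --- they have the same shape but a different (larger) exponential rate, which would propagate into the statement of \Cref{thm:existence}. One side remark should be corrected: your assertion that the coupling term ``cannot be closed'' without the Meyers estimate is refuted by the paper's identity; Meyers is genuinely indispensable elsewhere (e.g.\ in \Cref{thm:wconv} and \Cref{lem:holdercontinuity}), but not here. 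On the other hand, your opening observation --- that membership for each fixed $m$ is immediate from finite-dimensionality and continuity of the $\alpha_k$, so the real content of the lemma is the $m$-uniform estimate --- is a correct and worthwhile reading that the paper leaves implicit.
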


\begin{proof}
Using $\psi = S_m z$ in \eqref{eq:smdef} along with \cref{as:q,as:dif} gives
\begin{equation}\label{eq:finvarformctest}
\begin{aligned}
 \frac{1}{2} \frac{\mathrm{d}}{\mathrm{d}{t}} \|  S_m z \|^2 \, + & \, \frac{D_*}{2}  \| \nabla S_m z \|^2  + J_4 \le J_1 + J_2 + J_3,
\end{aligned}
\end{equation}
with
\begin{equation}
\begin{aligned}
J_1 &= \langle f, S_m z \rangle,\\
J_2 &= - \mu\langle  P_{\hbar}(S_m z), S_m z \rangle,\\
J_3 &= \mu \langle  P_{\hbar}(\theta), S_m z \rangle,\\
J_4 &= \langle S_m z \, \kappa(z) \nabla u, \nabla S_m z \rangle + \langle q S_m z, S_mz \rangle \\
\end{aligned}
\end{equation}
Here the task is to bound each of these terms.

Cauchy-Schwarz and Young's inequalities give
\begin{equation}\label{eq:fmubound}
J_1 \le \frac{1}{2 \mu} \|f\|^2 + \frac{\mu}{2} \|S_m z\|^2.
\end{equation} 

Adding and subtracting $\mu \langle S_m z, S_m z \rangle$ and using \cref{eq:sparsebound} gives
\begin{equation}\label{eq:finctrbound}
\begin{aligned}
J_2 &= \mu \langle S_m z -  P_{\hbar}(S_m z), S_m z\rangle - \mu \| S_m z\|^2\\
&\le \mu \|P_{{\hbar}}(S_m z) - S_m z\|  \|S_m z\| - \mu \|S_m z\|^2\\
&\le \frac{\mu c_0^2 {\hbar}^2}{2} \| \nabla S_m z\|^2 - \frac{\mu}{2}\|S_m z\|^2.
\end{aligned}
\end{equation}

Applying Cauchy-Schwarz and Young's inequalities gives
\begin{equation}\label{eq:ctrltermbound}
J_3 \le \frac{\mu}{2} \|P_{\hbar}(\theta)\|^2 + \frac{\mu}{2} \|S_m z\|^2,
\end{equation}
where in lieu of the stability of $P_\hbar$:
\begin{equation}
 \|P_{\hbar}(\theta) \| \le \| P_{\hbar}(\theta) - \theta\| + \|\theta \| \le c_0 \hbar \|\nabla \theta\| + \|\theta\| \le (b_0 c_0 \hbar +1) \|\theta\|_1,
\end{equation}
yield
\begin{equation}\label{eq:ctrtermbd}
J_3 \le \frac{\mu}{2} (b_0 c_0 \hbar + 1)^2 \|\theta\|_1^2 + \frac{\mu}{2} \|S_m z\|^2. 
\end{equation}

Since $\nabla (S_m z)^2 = 2 S_m z \nabla S_m z$ and $u$ satisfies \eqref{eq:sepellptic},
\begin{equation}\label{eq:advecbound1}
\begin{aligned}
J_4 &= \frac{1}{2} \langle \kappa(z) \nabla u , \nabla (S_m z)^2 \rangle + \langle q S_m z, S_mz \rangle\\
&= \frac{1}{2} \langle g, (S_m z)^2 \rangle +  \langle q S_m z, S_mz \rangle\\
&= \frac{1}{2} \langle g + 2q, (S_m z)^2 \rangle\\
&\ge 0,
\end{aligned}
\end{equation}
where we have used \cref{eq:infbdassumptions} to get the bound.

Putting all these estimates back to \cref{eq:finvarformctest} yields an observation that $S_m z$ satisfies
\begin{equation}\label{eq:finvarformtestbounded}
\begin{aligned}
& \frac{\mathrm{d}}{\mathrm{d}{t}} \|  S_m z \|^2 + \left(D_* - \mu c_0^2 {\hbar}^2\right) \| \nabla S_m z\|^2  \le 
 \mu \| S_m z \|^2  + \frac{1}{\mu} \|f\|^2 + \mu (b_0 c_0 \hbar + 1)^2 \|\theta\|_1^2,
\end{aligned}
\end{equation}
which due to $D_* - \mu c_0^2 {\hbar}^2 > 0$ in \cref{eq:muhbarbound}, along with integration over $(0,t)$ further  yields
\begin{equation}
 \|  S_m z \|^2  \le \int_0^t  \left( \mu\| S_m z \|^2 + \frac{1}{\mu} \|f\|^2 + \mu (b_0 c_0 \hbar + 1)^2 \|\theta\|_1^2 \right) ~\mathrm{d} s  + \| \hat{\theta}_0^m\|^2.
\end{equation}
By the integral form of Gr\"{o}nwall's inequality \cite{2002:Majada},
\begin{equation}
\|S_m z\|^2 \le \left(  \| \hat{\theta}_0^m\|^2  + \int_0^t \Big(\frac{1}{\mu} \|f\|^2 + \mu (b_0 c_0 \hbar + 1)^2 \|\theta\|_1^2  \Big)~\mathrm{d} s \right)  e^{\mu t}.
\end{equation}
Taking the supremum over all $t \in [0,T]$  and noting that $\theta$ satisfies \cref{eq:truethetal2bound} it follows that
\begin{equation}\label{eq:thetaml2bound}
 \|  S_m z \|^2  \le \beta e^{\mu t},
\end{equation}
where $\beta$ is as defined in \cref{eq:alphabetadef}. It then follows that $S_m z \in L_T^{\infty}\left(L^2(\Omega)\right)$ as desired.  

Furthermore, returning to \cref{eq:finvarformtestbounded} and integrating it over $(0,T)$ and noting that $\| S_m z \|^2$ is a positive quantity one observes that
\begin{equation}
\left(D_* - \mu c_0^2 {\hbar}^2\right)  b_0^2  \|S_m z\|_{L_T^2(H^1_{\text{D}})}^2 \le \mu \int_0^T \| S_m z \|^2 ~\mathrm{d} s + \beta.
\end{equation}
Since $D_* - \mu c_0^2 \hbar^2 > 0$ and $S_m z \in L_T^{\infty}\left(L^2(\Omega)\right)$ satisfies \cref{eq:thetaml2bound} we conclude that
\begin{equation}\label{eq:varformtestbounded}
\| S_m z\|_{L_T^2(H^1_{\text{D}})}^2 \le b_0^{-2}(2D_* - \mu c_0^2 {\hbar}^2)^{-1} \beta e^{\mu T}.
\end{equation}
Consequently,  $S_m z \in L_T^2(H^1_{\text{D}})$ as desired.
\qed
\end{proof}

In \Cref{lem:holdercontinuity} we turn our attention to establishing a sense of continuity of $S_m z$ with respect to time. It is worth noting that since $S z$ and consequently $\hat{\theta}$ are obtained by passing to the limit on $S_m z$ the bounds appearing in \cref{eq:hatthetainfbound,eq:hatthetal2bound} are an immediate consequence of \cref{eq:thetaml2bound} and \cref{eq:varformtestbounded}. Similarly, it is also the case that $S z$ and $\hat{\theta}$ can be shown to be equicontinuous with respect to time using the same process for proving \Cref{lem:holdercontinuity}. 
\begin{lemma}\label{lem:holdercontinuity}
Let $z \in L_T^2(L^2(\Omega)$ be fixed and $S_m z$ be the solution of \cref{eq:smdef}. Then $\partial_t S_m z \in L_T^2\left( H^{-1}(\Omega)\right)$ and $ S_m z \in C_T^\gamma(H^{-1}(\Omega))$ for $0\le\gamma \le \frac{1}{2}$.
\end{lemma}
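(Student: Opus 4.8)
The plan is to first show that $\partial_t S_m z \in L^2_T(H^{-1}(\Omega))$ and then deduce the H\"older continuity by integrating this bound in time. To estimate $\|\partial_t S_m z\|_{H^{-1}}$ at a fixed time, I would take an arbitrary $\psi \in H^1_{\text{D}}$ with $\|\psi\|_1 \le 1$ and exploit the structure of the Galerkin scheme. Since the basis $\{e_k\}$ is orthogonal in both $L^2(\Omega)$ and $H^1(\Omega)$, the projection $\Pi_m$ coincides with the $L^2$-orthogonal projection onto $\mathcal{W}^m$; because $\partial_t S_m z \in \mathcal{W}^m$, this gives $\langle \partial_t S_m z, \psi \rangle = \langle \partial_t S_m z, \Pi_m \psi \rangle$. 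Testing \cref{eq:smdef} with $\Pi_m \psi \in \mathcal{W}^m$ then yields
$$
\langle \partial_t S_m z, \psi \rangle = \langle f - \mu P_{\hbar}(S_m z - \theta), \Pi_m \psi \rangle - A(S_m z, u, \Pi_m \psi; z),
$$
and since $\|\Pi_m \psi\|_1 \le \|\psi\|_1 \le 1$, it remains to bound the right-hand side uniformly in such $\psi$.

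The linear contributions are routine: the source term is controlled by $\|f\|$, the feedback term by $\mu(\|S_m z\| + \|P_\hbar(\theta)\|)$ together with the stability estimate for $P_\hbar$ already used to obtain \cref{eq:ctrtermbd}, the diffusion term $\langle D \nabla S_m z, \nabla \Pi_m \psi\rangle$ by $D^* \|\nabla S_m z\|$, and the zeroth-order term $\langle q S_m z, \Pi_m \psi\rangle$ by $\|q\|\,\|S_m z\|_{0,4}\|\Pi_m\psi\|_{0,4}$ via the embedding $H^1(\Omega) \hookrightarrow L^4(\Omega)$. The one genuinely delicate piece is the nonlinear advection term $\langle S_m z\,\kappa(z)\nabla u, \nabla \Pi_m \psi\rangle$, which I expect to be the main obstacle. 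Here I would use H\"older's inequality to bound it by $\kappa^* \|S_m z\|_{0,s}\|\nabla u\|_{0,r}\|\nabla \Pi_m \psi\|$ with $\frac{1}{r} + \frac{1}{s} = \frac{1}{2}$, invoke the Meyers-type estimate \cref{eq:lala} from \Cref{lem:elliptic} to get $\|\nabla u\|_{0,r} \le C(r)\|g\|$ for some $r \in (2, r_0)$, and apply the Gagliardo-Nirenberg inequality \cref{eq:GNI} to obtain $\|S_m z\|_{0,s} \le C_{\text{gn}}\|S_m z\|^{2/s}\|S_m z\|_1^{(s-2)/s}$.

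Collecting these bounds gives a pointwise-in-time estimate of the form $\|\partial_t S_m z\|_{H^{-1}} \le C(\|f\| + \|S_m z\|_1 + \|S_m z\|_{0,s} + \|\theta\|_1 + 1)$, after absorbing the $L^\infty_T(L^2)$ bound \cref{eq:thetaml2bound} into the constant. Squaring and integrating over $[0,T]$, every term is seen to lie in $L^2_T$: $\|f\|$ by \cref{as:f}, the quantities $\|S_m z\|_1$ and $\|\theta\|_1$ by \Cref{lem:range} and \cref{eq:truethetal2bound}, and the nonlinear term by combining the $L^\infty_T(L^2)$ bound on $S_m z$ with a H\"older-in-time argument (using $2(s-2)/s < 2$) applied to $\|S_m z\|_1 \in L^2_T$. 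This establishes $\partial_t S_m z \in L^2_T(H^{-1}(\Omega))$.

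Finally, writing $S_m z(t) - S_m z(\tau) = \int_\tau^t \partial_t S_m z\,\mathrm{d}s$ as an identity in $H^{-1}(\Omega)$ and applying the Cauchy-Schwarz inequality in time gives $\|S_m z(t) - S_m z(\tau)\|_{H^{-1}} \le |t-\tau|^{1/2}\|\partial_t S_m z\|_{L^2_T(H^{-1})}$, which furnishes the $\gamma = \tfrac{1}{2}$ H\"older seminorm. The cases $0 \le \gamma < \tfrac{1}{2}$ follow since $[0,T]$ is bounded, and the supremum part of the $C_T^\gamma(H^{-1}(\Omega))$ norm is finite because $S_m z \in L^\infty_T(L^2(\Omega)) \hookrightarrow L^\infty_T(H^{-1}(\Omega))$, completing the argument.
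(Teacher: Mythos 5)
Your proposal is correct, and it reorganizes the paper's argument rather than departing from it in substance. The paper proves the H\"older bound directly: it integrates the Galerkin equation \cref{eq:smdef} over $(s,t)$, splits the increment $\langle S_m z(\cdot,t)-S_m z(\cdot,s),\psi\rangle$ into the same four contributions you identify (diffusion, advection, zeroth-order, source/feedback), and applies Cauchy--Schwarz in time to each to extract the factor $(t-s)^{1/2}$; the membership $\partial_t S_m z\in L_T^2(H^{-1}(\Omega))$ is asserted separately in one terse sentence. You instead prove a pointwise-in-time duality bound on $\|\partial_t S_m z\|_{H^{-1}}$, integrate its square, and recover the H\"older seminorm afterwards via the fundamental theorem of calculus and Cauchy--Schwarz in time. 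The core estimates are identical (Meyers' bound \cref{eq:lala}, Sobolev embedding or Gagliardo--Nirenberg, stability of $P_\hbar$), but your organization buys two things. First, it makes explicit a step the paper glosses over: \cref{eq:smdef} holds only for test functions in $\mathcal{W}^m$, so passing to arbitrary $\psi\in H^1_{\text{D}}$ with $\|\psi\|_1\le 1$ requires precisely your observation that, for the eigenfunction basis, $\Pi_m$ coincides with the $L^2$-projection and $\|\Pi_m\psi\|_1\le\|\psi\|_1$; the paper's direct increment estimate implicitly needs the same device, since $S_m z(\cdot,t)-S_m z(\cdot,s)\in\mathcal{W}^m$. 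Second, your route derives the H\"older continuity as a corollary of the time-derivative bound, tying together the two assertions of the lemma, which the paper establishes essentially independently. A minor variation: for the advection term the paper bounds $\|S_m z\|_{0,\sigma}\le C_{\text{emb}}\|S_m z\|_1$ directly, whereas you use Gagliardo--Nirenberg; both work, and your choice correctly requires the extra H\"older-in-time step (exploiting $2(s-2)/s<2$) that you indicate to conclude square-integrability of the nonlinear term.
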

\begin{proof}
Since as a test function $\psi \in H^1_\text{D}$, it follows from \Cref{lem:range} that $\partial_t S_m z \in L_T^2 \left( H^{-1}(\Omega) \right)$. 
Integration of \cref{eq:smdef} over $(s,t)$ gives
\begin{equation}\label{eq:cequation}
|\langle  S_m z(\cdot,t) - S_m z(\cdot, s), \psi \rangle| \le L_1 + L_2 + L_3 + L_4,
\end{equation}
where
$$
\begin{aligned}
L_1 &=   \int_s^t |\langle D \nabla S_m z, \nabla \psi \rangle| \, \mathrm{d} \tau, ~~
L_2 =   \int_s^t |\langle S_m z \, \kappa(z) \nabla u, \nabla \psi \rangle| \, \mathrm{d} \tau\\
L_3 &=   \int_s^t |\langle q S_m z, \psi \rangle| \, \mathrm{d} \tau, ~~
\hspace*{0.7cm}  L_4 = \int_s^t |\langle f - \mu P_{\hbar}(S_m z-\theta) , \psi \rangle| \, \mathrm{d} \tau 
\end{aligned}
$$
Estimates for each of the above terms are shown next.

Using \cref{as:dif} and Cauchy-Schwarz inequality it holds that
\begin{equation}\label{eq:cdiffbound}
L_1  \le D^* \|\nabla \psi \| \, \|S_m z\|_{L_T^2(H^1_{\text{D}})} (t-s)^{\frac{1}{2}}.
\end{equation}

By Cauchy-Schwarz and H\"older's inequalities along with Sobolev embedding $H^1(\Omega) \hookrightarrow L^\sigma(\Omega)$ for $\sigma>2$ (see for example p. 85 of \cite{200379}), and \cref{eq:lala},
\begin{equation}
\begin{aligned}
|\langle S_m z \, \kappa(z) \nabla u, \nabla \psi \rangle|
&\le \kappa^* \| S_m z \nabla u \| \, \| \nabla \psi \| \\
&\le \kappa^* \| S_m z \|_{0,\sigma} \, \| \nabla u \|_{0,r} \, \| \nabla \psi \|, \text{ where } \frac{1}{\sigma}+\frac{1}{r}  = \frac{1}{2}  \\
&\le \kappa^* C_\text{emb} \| S_m z \|_1 \, C(r) \| g \| \, \| \nabla \psi \|.
\end{aligned}
\end{equation}
Using this last inequality to estimate $L_2$ gives
\begin{equation}\label{eq:cadvecbound}
\begin{aligned}
L_2 &\le \kappa^* C_\text{emb} C(r) \| g \| \, \| \nabla \psi \|  \int_s^t \| S_m z\|_1 \, {\rm d} \tau\\
&\le \kappa^* C_\text{emb} C(r) \| g \| \, \| \nabla \psi \| \, \| S_m z\|_{L_T^2(H^1_{\text{D}})} (t-s)^{\frac{1}{2}}.
\end{aligned}
\end{equation}

Using Cauchy-Schwarz and H\"older's inequalities and applying Sobolev embedding $H^1(\Omega) \hookrightarrow L^4(\Omega)$,
we find that
\begin{equation}
| \langle q S_m z, \psi \rangle | \le   \| q \| \, \| S_m z \, \psi \| \le 
\| q \| \, \| S_m z \|_{0,4} \, \| \psi \|_{0,4} \le C^2_\text{emb} \| q \| \, \| S_m z \|_1 \, \| \psi \|_1.
\end{equation}
Application of this last estimate to $L_3$ yields
\begin{equation}\label{eq:csourcebound}
L_3 \le C^2_\text{emb} \| q \| \, \| \psi \|_1 \int_s^t  \, \| S_m z \|_1  \mathrm{d} \tau \le C^2_\text{emb} \| q \| \, \| \psi \|_1 \, \| S_m z \|_{L^2_T(H^1_\text{D})} (t-s)^{\frac{1}{2}}.
\end{equation}

By Cauchy-Schwarz and triangle inequalities, and \cref{eq:sparsebound} we find that
\begin{equation}\label{eq:csparsebound}
\begin{aligned}
L_4 &\le \|\psi\|  \int_s^t \big( \|f\| + \mu \| P_{\hbar} ( S_m z - \theta) - \left(S_m z - \theta\right) \| + \mu \| S_m z - \theta \| \big) ~\mathrm{d} \tau \\
&\le \|\psi\| \int_s^t \big( \|f\| + \mu c_0 \hbar \|\nabla(S_m z -\theta)\|  + \mu \| S_m z - \theta \| \big) ~\mathrm{d} \tau\\
&\le C(\hbar, \mu) \|\psi\|  \left( \|f\|_{L_T^2 \left(L^2(\Omega)\right)} + \|S_m z - \theta\|_{L_T^2(H^1_{\text{D}})} \right)(t-s)^{\frac{1}{2}}.
\end{aligned}
\end{equation}

Combining \cref{eq:cdiffbound}, \cref{eq:cadvecbound}, \cref{eq:csourcebound}, and \cref{eq:csparsebound} with \cref{eq:cequation} and taking the supremum over all $s,t \in [0,T]$ yield
\begin{equation*}
\hspace{-0.cm}\|S_m z\|_{C_T^{\frac{1}{2}}\left(H^{-1}(\Omega)\right)}  \le C\left(\|g \| + \|q\| + \|f\|_{L_T^2 \left(L^2(\Omega)\right)} + \|S_m z - \theta\|_{L_T^2(H^1_{\text{D}})} + \|\theta\|_{L_T^2(H^1_{\text{D}})}\right),
\end{equation*}
confirming that $S_m z \in C_T^{\gamma} (H^{-1}(\Omega))$ for $0\le\gamma \le \frac{1}{2}$.
\qed 
\end{proof}

\begin{proposition}\label{thm-Sz}
For a given $\hat{\theta}_0 \in L^2(\Omega)$, $z \in L_T^{2}\left( L^2(\Omega) \right)$ and $u$ the solution of \eqref{eq:sepellptic}, there exists a unique solution $S z \in L_T^{\infty}\left(L^2(\Omega)\right) \cap L_T^2\left(H^1_{\text{D}}\right)$ of \eqref{eq:sepparabolic}.
\end{proposition}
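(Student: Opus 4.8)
The plan is to obtain $Sz$ as a limit of the Galerkin approximations $S_m z$ constructed in \Cref{lem:galerkinapprox}, exploiting the crucial fact that, for \emph{fixed} $z$ and $u$, the problem \eqref{eq:sepparabolic} is \emph{linear} in $Sz$; this linearity is what makes both the passage to the limit and the uniqueness tractable. First I would collect the a priori bounds already at hand: by \Cref{lem:range} the family $\{S_m z\}$ is bounded in $L_T^{\infty}(L^2(\Omega)) \cap L_T^2(H^1_{\text{D}})$ uniformly in $m$ (see \cref{eq:thetaml2bound,eq:varformtestbounded}), and by \Cref{lem:holdercontinuity} it is bounded in $C_T^{1/2}(H^{-1}(\Omega))$. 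From the reflexivity of $L_T^2(H^1_{\text{D}})$ and the Banach--Alaoglu theorem I would extract a subsequence (not relabeled) with $S_m z \rightharpoonup Sz$ weakly in $L_T^2(H^1_{\text{D}})$ and weakly-$*$ in $L_T^{\infty}(L^2(\Omega))$. The uniform $L_T^{\infty}(L^2)$ bound together with the compact embedding $L^2(\Omega) \hookrightarrow\hookrightarrow H^{-1}(\Omega)$ and the equicontinuity furnished by the $C_T^{1/2}(H^{-1})$ bound yields, via an Arzel\`a--Ascoli argument, strong convergence $S_m z \to Sz$ in $C_T(H^{-1}(\Omega))$; this last mode of convergence is what lets me recover the initial condition.

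Next I would pass to the limit in \cref{eq:smdef}. Fixing a test function $\psi \in \mathcal{W}^{m_0}$ and a smooth temporal weight, the Galerkin identity holds for all $m \ge m_0$, and since every term is linear in $S_m z$, weak convergence suffices. The diffusion term $\langle D\nabla S_m z, \nabla\psi\rangle$ and the reaction term $\langle q S_m z, \psi\rangle$ converge by the weak $L_T^2(H^1_{\text{D}})$ convergence. The delicate term is the advection contribution $\langle S_m z\,\kappa(z)\nabla u, \nabla\psi\rangle$: here the coefficient $\kappa(z)\nabla u$ is only in $L^r(\Omega)$ rather than $L^\infty(\Omega)$, but the Meyers-type estimate \cref{eq:lala} of \Cref{lem:elliptic} places it in $L_T^\infty(L^r(\Omega))$ with $r>2$, so that $\kappa(z)\nabla u\cdot\nabla\psi$ lies in a space dual to the one in which $S_m z$ converges weakly (using the Sobolev embedding $H^1(\Omega)\hookrightarrow L^{\sigma}(\Omega)$ with $\sigma$ the conjugate exponent). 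The feedback term passes to the limit because $P_{\hbar}:H^1(\Omega)\to L^2(\Omega)$ is bounded and hence weakly continuous, and the time-derivative term is handled by integrating against the temporal weight and transferring the derivative. Combining these, $Sz$ satisfies \eqref{eq:sepparabolic}; the $C_T(H^{-1})$ convergence together with $\hat{\theta}_0^m \to \hat{\theta}_0$ in $L^2(\Omega)$ identifies $Sz(\cdot,0)=\hat{\theta}_0$, and $\partial_t Sz \in L_T^2(H^{-1}(\Omega))$ follows by reading it off the equation once $Sz\in L_T^2(H^1_{\text{D}})$.

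Finally, for uniqueness I would subtract two solutions, set $w = Sz_1 - Sz_2$, which satisfies the homogeneous linear problem with $w(\cdot,0)=0$, and test with $\psi = w$ to obtain
\begin{equation*}
\frac{1}{2}\frac{\mathrm{d}}{\mathrm{d}t}\|w\|^2 + \langle D\nabla w, \nabla w\rangle + \langle w\,\kappa(z)\nabla u, \nabla w\rangle + \langle q w, w\rangle + \mu\langle P_{\hbar}(w), w\rangle = 0 .
\end{equation*}
As in \cref{eq:advecbound1}, I would rewrite the advection-plus-reaction terms via $\nabla w^2 = 2w\nabla w$ and the elliptic identity \eqref{eq:sepellptic} as $\frac{1}{2}\langle g + 2q, w^2\rangle \ge 0$ using \cref{eq:infbdassumptions}, and rewrite the feedback term as in \cref{eq:finctrbound} to obtain $\mu\langle P_{\hbar} w, w\rangle \ge -\frac{\mu c_0^2\hbar^2}{2}\|\nabla w\|^2 + \frac{\mu}{2}\|w\|^2$. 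The constraint \cref{eq:muhbarbound} then lets the $\|\nabla w\|^2$ contribution be absorbed by the diffusion term, leaving $\frac{\mathrm{d}}{\mathrm{d}t}\|w\|^2 \le 0$, whence Gr\"onwall's inequality and $w(\cdot,0)=0$ force $w\equiv 0$. I expect the main obstacle to be precisely the advection term $\langle S_m z\,\kappa(z)\nabla u,\nabla\psi\rangle$, whose coefficient lacks $L^\infty$ control; its treatment is where the higher integrability from \Cref{lem:elliptic} is indispensable, while everything else reduces to the standard Galerkin limit, made routine here by the linearity of the problem in $Sz$.
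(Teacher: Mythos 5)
Your proposal is correct and, for the existence part, follows essentially the same route as the paper: the a priori bounds of \Cref{lem:range} and \Cref{lem:holdercontinuity}, a compactness argument yielding weak convergence in $L_T^2(H^1_{\text{D}})$ and strong convergence in $C_T(H^{-1}(\Omega))$, and a term-by-term passage to the limit in the Galerkin identity. The differences are ones of packaging and scope: where you assemble compactness from Banach--Alaoglu plus an Arzel\`a--Ascoli argument with the compact embedding $L^2(\Omega) \hookrightarrow\hookrightarrow H^{-1}(\Omega)$, the paper simply invokes Theorem 4.1 of Chapter 4 in the book of Vishik \cite{1988:Vishik}, which delivers the same strong $L_T^2(L^2(\Omega))$ and $C_T(H^{-1}(\Omega))$ convergences; and, more substantively, you actually prove the uniqueness assertion (testing the homogeneous difference equation with $w$, using the identity behind \cref{eq:advecbound1}, the feedback estimate \cref{eq:finctrbound}, and the smallness condition \cref{eq:muhbarbound}), whereas the paper's proof establishes existence only and leaves uniqueness implicit, even though it is claimed in the statement and later relied upon in the continuity lemma for $S$. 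One caveat in your uniqueness step: the identity $\langle w\,\kappa(z)\nabla u, \nabla w \rangle = \tfrac{1}{2}\langle g, w^2 \rangle$ is immediate for the Galerkin functions in \cref{eq:advecbound1} (finite combinations of smooth basis elements), but for the limit solutions it requires the density argument of \Cref{sec:appendix1}, exactly as the paper invokes it in the proof of \Cref{thm:convergence}; with that reference supplied, your argument is complete, and your treatment of the advection term in the limit passage via the Meyers estimate \cref{eq:lala} and the embedding $H^1(\Omega) \hookrightarrow L^\sigma(\Omega)$ is precisely the right duality to make weak convergence suffice.
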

\begin{proof}
As a consequence of \Cref{lem:range,lem:holdercontinuity} we find that the sequence of functions $\{S_m z\}_{m=0}^{\infty}$ is bounded in $L_T^2\left(H^1_{\text{D}}\right)$ and in $C_T^{\gamma}(H^{-1}(\Omega))$. Using Theorem 4.1 in Chapter 4 in \cite{1988:Vishik} there exists a subsequence $\{S_{m_k} z\}_{k=0}^{\infty} \subset \{S_m z\}_{m=0}^{\infty}$ and  $S z$ such that 
\begin{equation}
S_{m_k} z \rightarrow S z \text{ in } L_T^2(L^2(\Omega)) \text{ and }
S_{m_k} z \rightarrow S z \text{ in } C_T(H^{-1}(\Omega) ).
\end{equation}
Moreover, we also have
\begin{equation}
S_{m_k} z \rightharpoonup S z \text{ in } L_T^2(H^1_{\text{D}}).
\end{equation}

Using an arbitrary $\psi \in H^1_{\text{D}}$ as a test function in \cref{eq:smdef} and integrating over time we find that
\begin{equation}\label{eq:testedfiniteparabolicvarform}
\langle  S_{m_k} z, \psi \rangle +  \int_0^t A(S_{m_k} z, u,\psi; z)  ~\mathrm{d} s  =   \int_0^t \langle f - \mu P_{\hbar}(S_{m_k} z-\theta) , \psi \rangle  ~\mathrm{d} s + \langle  \hat{\theta}_0^m, \psi \rangle.
\end{equation} 
 Upon passing the weak limit to each term of \cref{eq:testedfiniteparabolicvarform} we find that
\begin{equation}\label{eq:varpassedlimits}
\langle S z, \psi \rangle +  \int_0^t A(S z, u,\psi; z)  ~\mathrm{d} s  =   \int_0^t \langle f - \mu P_{\hbar}(S z-\theta) , \psi \rangle  ~\mathrm{d} s + \langle \hat{\theta}_0, \psi \rangle,
\end{equation}
for all $\psi \in  H^1_{\text{D}}$.
\qed \end{proof}

In addition to showing that $S$ satisfying \eqref{eq:sepvarform} is well-posed, we must establish that it is continuous with a relatively compact range to invoke Schauder's fixed point theorem.

\begin{lemma}
$S: L^2_T(L^2(\Omega)) \to L_T^2(L^2(\Omega))$ in \eqref{eq:sepvarform} is continuous.
\end{lemma}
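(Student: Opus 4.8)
The plan is to prove sequential continuity: given $z_n \to z$ in $L^2_T(L^2(\Omega))$, I will show $Sz_n \to Sz$ in $L^2_T(L^2(\Omega))$. A direct route would test the equation satisfied by $Sz_n - Sz$ against $Sz_n - Sz$ and try to absorb the advective discrepancy into the diffusion; however, controlling the velocity difference $\kappa(z_n)\nabla u_n - \kappa(z)\nabla u$ quantitatively in terms of $z_n - z$ appears to require the Lipschitz hypothesis of \Cref{thm:wconv}, which is not available under \cref{as:kappa} alone. I will therefore argue by compactness together with the subsequence principle: it suffices to show that every subsequence of $(z_n)$ admits a further subsequence along which $Sz_n$ converges to $Sz$ in $L^2_T(L^2(\Omega))$, since then the full sequence must converge to the same limit.

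First I would pass to a subsequence (not relabeled) with $z_n \to z$ almost everywhere in $\Omega \times [0,T]$. Continuity and boundedness of $\kappa$ (\cref{as:kappa}) and dominated convergence then give $\kappa(z_n) \to \kappa(z)$ in every $L^p(\Omega\times[0,T])$, $p<\infty$. Writing $u_n,u$ for the elliptic solutions of \eqref{eq:sepellptic} associated with $z_n,z$, subtracting the two weak forms and testing with $u_n-u$ yields $\|\nabla(u_n-u)\| \le \kappa_*^{-1}\|(\kappa(z)-\kappa(z_n))\nabla u\|$ pointwise in $t$; since $|\nabla u|^2 \in L^1(\Omega\times[0,T])$, a second application of dominated convergence gives $u_n \to u$ in $L^2_T(H^1_{\text{D}})$, whence the Darcy velocities $\boldsymbol{v}_n := \kappa(z_n)\nabla u_n \to \boldsymbol{v} := \kappa(z)\nabla u$ strongly in $L^2_T(L^2(\Omega))$. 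The Meyers estimate \eqref{eq:lala} of \Cref{lem:elliptic} moreover bounds $\boldsymbol{v}_n$ uniformly in $L^\infty_T(L^r(\Omega))$ for some $r>2$, so by interpolation $\boldsymbol{v}_n \to \boldsymbol{v}$ in $L^2_T(L^\rho(\Omega))$ for every $\rho \in [2,r)$.

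Next I use that the bound $\beta$ in \eqref{eq:alphabetadef} is independent of $z$, so \Cref{lem:range} and the time-equicontinuity of \Cref{lem:holdercontinuity} (noted there to carry over to the limit) make $(Sz_n)$ bounded in $L^2_T(H^1_{\text{D}})$ and in $C^\gamma_T(H^{-1}(\Omega))$, uniformly in $n$. The same Vishik-type compactness used in \Cref{thm-Sz} then furnishes a further subsequence and a limit $\xi \in L^\infty_T(L^2(\Omega))\cap L^2_T(H^1_{\text{D}})$ with $Sz_n \to \xi$ strongly in $L^2_T(L^2(\Omega))$ and $Sz_n \rightharpoonup \xi$ weakly in $L^2_T(H^1_{\text{D}})$. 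I would then pass to the limit in the integrated weak form \eqref{eq:varpassedlimits} written for $Sz_n$, $u_n$, $z_n$. The linear diffusion, reaction, source, and feedback terms pass by weak/strong convergence, using the $H^1(\Omega)\to L^2(\Omega)$ boundedness of $P_\hbar$ implied by \eqref{eq:sparsebound}.

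The main obstacle is the nonlinear advection term $\langle Sz_n\,\boldsymbol{v}_n, \nabla\psi\rangle$. Splitting $Sz_n\boldsymbol{v}_n - \xi\boldsymbol{v} = Sz_n(\boldsymbol{v}_n-\boldsymbol{v}) + (Sz_n-\xi)\boldsymbol{v}$ and applying a three-factor Hölder inequality with exponents $\frac{1}{\sigma}+\frac{1}{\rho}+\frac{1}{2}=1$, I would control the first piece by $\|Sz_n\|_{0,\sigma}\,\|\boldsymbol{v}_n-\boldsymbol{v}\|_{0,\rho}\,\|\nabla\psi\|$ and the second by $\|Sz_n-\xi\|_{0,\sigma}\,\|\boldsymbol{v}\|_{0,\rho}\,\|\nabla\psi\|$, choosing $\rho\in(2,r)$ and $\sigma = 2\rho/(\rho-2)<\infty$. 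The factors in $L^\sigma(\Omega)$ are handled through the 2D embedding $H^1(\Omega)\hookrightarrow L^\sigma(\Omega)$ (uniform boundedness of $Sz_n$ and strong $L^2_T(L^\sigma)$ convergence of $Sz_n \to \xi$ by interpolation against the uniform $L^2_T(H^1_{\text{D}})$ bound), while $\|\boldsymbol{v}_n-\boldsymbol{v}\|_{L^2_T(L^\rho)}\to 0$ by the second step; an integration in time and Cauchy--Schwarz then close the estimate. Passing to the limit shows $\xi$ satisfies exactly \eqref{eq:varpassedlimits} with data $z,u$, so $\xi = Sz$ by the uniqueness in \Cref{thm-Sz}. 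Since this limit is independent of the chosen subsequence, the subsequence principle yields $Sz_n \to Sz$ and hence the continuity of $S$.
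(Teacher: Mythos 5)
Your proposal is correct and follows essentially the same route as the paper's proof: strong convergence of the Darcy velocities $\kappa(z_n)\nabla u_n \to \kappa(z)\nabla u$ via a.e.\ convergence of $\kappa(z_n)$ and dominated convergence, compactness of $(Sz_n)$ from the uniform bounds of \Cref{lem:range} and \Cref{lem:holdercontinuity} together with the Vishik-type theorem, passage to the limit in the weak formulation, and identification of the limit with $Sz$ by the uniqueness in \Cref{thm-Sz}. The only differences are ones of rigor rather than of method: you explicitly invoke the subsequence principle (which the paper implicitly needs, since the a.e.\ convergence of $z_n$ only holds along a subsequence) and you spell out the three-factor H\"older/interpolation argument for the nonlinear advection term, which the paper dispatches in a single sentence.
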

\begin{proof}
Let $\{z_n\} \subset L_T^2\left(L^2(\Omega)\right)$ such that 
$\| z_n - z \|_{ L_T^2(L^2(\Omega))} \to 0$ as $n \to \infty$.
For each $z_n$ let $(S z_n, u_n)$ be the associated solution of \eqref{eq:sepvarform}. Note that by definition \eqref{eq:sepellptic} we have that
\begin{equation}\label{eq:darcydiff}
\langle \kappa(z_n) \nabla u_n - \kappa(z)\nabla u, \nabla \varphi \rangle = 0
\end{equation}
for any $\varphi \in L_T^{\infty}\left(H^1_{\text{D}} \right)$. Adding and subtracting the mixed term $\kappa(z_n) \nabla u$ in \cref{eq:darcydiff} and using $\varphi = u_n - u$ as a test function it follows that
\begin{equation}\label{eq:pressequal}
 \kappa_* \|\nabla (u_n - u) \|^2  \le  \langle(\kappa(z) - \kappa(z_n) ) \nabla u, \nabla (u_n - u) \rangle.\\
\end{equation}
Integrating \cref{eq:pressequal} over $[0,T]$, and applying the Cauchy-Schwarz inequality in space yields
\begin{equation}
 \kappa_* \|\nabla (u_n - u) \|_{L_T^2(L^2(\Omega)^2)}^2  \le \int_0^T \left\|\left(\kappa(z) - \kappa(z_n) \right) \nabla u \right\| \left \| \nabla (u_n - u)\right\| ~\mathrm{d} s.
\end{equation}
Expanding $\left\|\left(\kappa(z) - \kappa(z_n) \right) \nabla u \right\| \left \| \nabla (u_n - u)\right\|$ with Young's inequality we find that
\begin{equation}\label{eq:pnbound}
 \|\nabla (u_n- u) \|_{L_T^2(L^{2}(\Omega)^2)}^2 \le C(\kappa_*)\left\| \left(\kappa(z)  - \kappa(z_n) \right)  \nabla u \right\|_{L_T^2(L^{2}(\Omega)^2)}^2.
\end{equation}
Moreover, by adding and subtracting the mixed term $\kappa(z_n) \nabla u$, using the triangle inequality, and \cref{eq:pnbound},
$$
\begin{aligned}
\|\kappa(z_n) \nabla u_n - \kappa(z) \nabla u \|_{L_T^2(L^{2}(\Omega)^2)} &= \left \| \kappa(z_n) \nabla (u_n -u) + \left(\kappa(z_n)-\kappa(z)\right)\nabla u \right\|_{L_T^2(L^{2}(\Omega)^2)}\\
& \le C(\kappa_*) \left \|\left(\kappa(z)-\kappa(z_n)\right)\nabla u \right\|_{L_T^2(L^{2}(\Omega)^2)}.
\end{aligned}
$$

Recall that $z_n \to z$ strongly in $L_T^{2}\left(L^{2}(\Omega)\right)$. Consequently, there exists a subsequence of $\{z_n\}$ for which $z_n(\boldsymbol{x}, t) \to z(\boldsymbol{x},t)$ for almost every $(\boldsymbol{x},t)$. Then using model assumption \cref{as:kappa} we find that $\kappa(z_n) \to \kappa(z)$ almost everywhere on $\Omega \times [0,T]$. Since $\kappa$ is bounded it follows from the Lebesgue dominated convergence theorem that
\begin{equation}
\lim_{n \to \infty}  \left \|\left(\kappa(z)-\kappa(z_n)\right)\nabla u \right\|_{L_T^2(L^{2}(\Omega)^2)} = 0.
\end{equation}
It then follows that $\kappa (z_n) \nabla u_n$ converges to $\kappa(z) \nabla u$ strongly in $L_T^2(L^2(\Omega)^2)$.
Using classical compactness results it follows from \Cref{lem:range} that there exists a function $Y$ such that
$$
S z_n \rightharpoonup Y  \text{ in } L_T^\infty(L^2(\Omega))\cap L_T^2(H^1_{\text{D}}) 
\text{ and }
S z_n \rightharpoonup Y  \text{ in } C_T^{\gamma}(H^{-1}(\Omega)), \text{ for } 0 \le \gamma \le \frac{1}{2}.
$$

Since $S z_n$ satisfies \eqref{eq:sepvarform}, passing to the limit in \eqref{eq:sepparabolic} gives
\begin{equation}
\langle Y, \psi \rangle + \int_0^t A(Y,u,\psi;z)~\mathrm{d}s =  \int_0^t \langle f - \mu P_{\hbar}(Y-\theta) , \psi \rangle ~\mathrm{d} s + \langle \hat{\theta}_0, \psi \rangle
\end{equation} 
for all $\psi \in H^1_{\text{D}}$. Note that convergence of the advection term in \eqref{eq:sepparabolic} follows from the fact that $\kappa (z_n) \nabla u_n$ converges to $\kappa(z) \nabla u$ strongly in $L_T^2\left(L^2(\Omega)^2\right)$.
Since solutions to the decoupled system \eqref{eq:sepvarform} are unique we conclude that $Y = Sz$ and the whole sequence $S z_n$ converges to $S z$ in $L_T^2\left(L^2(\Omega)\right)$. Consequently, we find that $S$ is a continuous mapping as desired.
\qed \end{proof}

Recall that any function $Sz$ satisfying \eqref{eq:sepvarform} is bounded in $L_T^2\left(H^1_{\text{D}}\right)$. Furthermore, we have shown that $Sz$ is also an element of $C_T^\gamma\left(H^{-1}(\Omega)\right)$ for $0 \le \gamma \le \frac{1}{2}$. As a consequence of Theorem 4.1 in \cite{1988:Vishik} we conclude that the range of $S$ is relatively compact in $L_T^2\left(L^2(\Omega)\right)$ since $H^1(\Omega) \subset L^2(\Omega) \subset H^{-1}(\Omega)$. By Schauder's fixed point theorem there exists a fixed point for $S$. Thus, there exists a weak solution $(\hat{\theta}, \hat{p})$ which satisfies \Cref{def:daweaksolution}.
%%%%%%%%%%%%%%%%%%%%%%%%%%%%%%%%%%%%%%%%%%%%%%%%%%%%%%%%%%%%%%%%%%%%%%%%%%%%%%%%%%%%%%%%%%%%%%%%%%%%%%%%%%%%%%%%%%%%%%%%%%%%%%%%%%%%%%%%%%%%%%%%%%%%%%
%%%%%%%%%%%%%%%%%%%%%%%%%%%%%%%%%%%%%%%%%%%%%%%%%%%%%%%%%%%%%%%%%%%%%%%%%%%%%%%%%%%%%%%%%%%%%%%%%%%%%%%%%%%%%%%%%%%%%%%%%%%%%%%%%%%%%%%%%%%%%%%%%%%%%%

\subsection{Error Estimates for Data Assimilation Algorithm}\label{sec:Error_Estimates}
The manner in which the approximation $(\hat{\theta},\hat{p})$ converges to $(\theta,p)$ of \Cref{def:weaksolution} is disseminated by analysis of discrepancy between the two pairs: $(\eta, \boldsymbol{w})$, with 
\begin{equation} \label{eq:errornich}
\eta = \theta-\hat{\theta} \text{ and } \boldsymbol{w} =  \kappa(\theta) \nabla p - \kappa(\hat{\theta}) \nabla \hat{p}.
\end{equation}
 Subtracting \cref{eq:davarform} from \cref{eq:genvarform} gives
\begin{equation}\label{eq:DifferenceProblem}
\begin{cases}
~\displaystyle \langle \partial_t \eta, \psi \rangle + \langle D \nabla \eta, \nabla \psi \rangle + \langle \theta \boldsymbol{w}, \nabla \psi\rangle 
+ \langle \eta \kappa(\hat{\theta}) \nabla \hat{p} ,\nabla \psi\rangle
+ \langle q \eta , \psi\rangle \hspace*{-0.04cm} = \hspace*{-0.04cm}- \mu \langle P_\hbar(\eta), \psi \rangle,\\
~ \langle \boldsymbol{w}, \nabla \varphi \rangle = 0,\\
\displaystyle \langle \eta (\cdot, 0),\psi \rangle = \langle \eta_0, \psi \rangle, \\
\end{cases}
\end{equation}
for all $\varphi \in H^1_{\text{D}}$ and $\psi \in H^1_{\text{D}}$,
where identity $\theta \kappa(\theta) \nabla p - \hat{\theta} \kappa(\hat{\theta}) \nabla \hat{p} = \theta \boldsymbol{w} + \eta \kappa(\hat{\theta}) \nabla \hat{p}$ and $\eta_0 =  \theta_0 - \hat{\theta}_0$ have been used.

\begin{lemma}\label{thm:convergence}
Under the assumptions \cref{as:kappa,as:dif,as:g,as:f,as:q,as:init,eq:infbdassumptions}, \cref{eq:LipsK}, \cref{eq:sparsebound}, and \cref{eq:muhbarbound}, $\eta$  in \cref{eq:DifferenceProblem} satisfies
\begin{equation}\label{eq:errorbound}
\|\eta(\cdot,t)\|^2 \le \|\eta_0\|^2 \, e^{\xi t},  \text{ with } \xi = \left(2\tilde{C}_{\boldsymbol{w}} \| g \|^s  - \mu\right) ,
\end{equation}
for every $t>0$, where $\frac{1}{r} + \frac{1}{s} = \frac{1}{2}$, and
\begin{equation}
 \tilde{C}_{\boldsymbol{w}} = \frac{1}{s} \frac{(C_{\boldsymbol{w}} C(r))^s}{(\epsilon D_*)^{s-1}},  \text{ and } \epsilon = \frac{s}{2(s-1)}.
\end{equation}
\end{lemma}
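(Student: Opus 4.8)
The plan is to derive an energy estimate for $\eta$ by choosing $\psi = \eta$ as the test function in the first equation of \cref{eq:DifferenceProblem}. Setting $\psi = \eta$ turns the time-derivative term into $\frac{1}{2}\frac{\mathrm{d}}{\mathrm{d}t}\|\eta\|^2$, the diffusion term into $\langle D\nabla\eta, \nabla\eta\rangle \ge D_* \|\nabla\eta\|^2$, and produces the feedback term $-\mu\langle P_\hbar(\eta),\eta\rangle$. The goal is to absorb all remaining terms into $D_*\|\nabla\eta\|^2$ and a multiple of $\|\eta\|^2$, so that Gr\"onwall's inequality yields the exponential bound \cref{eq:errorbound}. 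First I would handle the feedback term as in the proof of \Cref{lem:range}: by adding and subtracting $\mu\langle \eta,\eta\rangle$ and invoking \cref{eq:sparsebound}, one obtains $-\mu\langle P_\hbar(\eta),\eta\rangle \le \frac{\mu c_0^2\hbar^2}{2}\|\nabla\eta\|^2 - \frac{\mu}{2}\|\eta\|^2$, which contributes the crucial $-\mu$ to the exponent and is controlled in its gradient part by \cref{eq:muhbarbound}.

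Next I would dispose of the terms that are nonnegative or vanish. The reaction-type term $\langle q\eta,\eta\rangle$ should combine with part of the advective structure; more importantly, the pair $\langle\theta\boldsymbol{w},\nabla\eta\rangle + \langle\eta\kappa(\hat\theta)\nabla\hat p,\nabla\eta\rangle + \langle q\eta,\eta\rangle$ must be reorganized. The key observation is that the term $\langle\eta\kappa(\hat\theta)\nabla\hat p,\nabla\eta\rangle = \frac{1}{2}\langle\kappa(\hat\theta)\nabla\hat p,\nabla(\eta^2)\rangle$, and since $\hat p$ satisfies \cref{eq:davarform2} with right-hand side $g$, integrating by parts converts this into $\frac{1}{2}\langle g,\eta^2\rangle$, which combines with $\langle q\eta,\eta\rangle$ to give $\frac{1}{2}\langle g+2q,\eta^2\rangle \ge 0$ by \cref{eq:infbdassumptions}, exactly as in \cref{eq:advecbound1}. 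These terms can therefore be dropped to the left-hand side and discarded.

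The genuinely delicate term is $\langle\theta\boldsymbol{w},\nabla\eta\rangle$, where $\boldsymbol{w} = \kappa(\theta)\nabla p - \kappa(\hat\theta)\nabla\hat p$ measures the pressure/velocity discrepancy caused by the coupling through $\kappa$. This is where \Cref{thm:wconv} enters: applying \cref{eq:errw} with $z_1=\theta$, $z_2=\hat\theta$ (so $\varepsilon_z = \eta$) bounds $\|\boldsymbol{w}\| \le C_{\boldsymbol{w}}C(r)\|g\|\,\|\eta\|^{2/s}\|\nabla\eta\|^{(s-2)/s}$. Using $0\le\theta\le 1$ to bound $\|\theta\boldsymbol{w}\| \le \|\boldsymbol{w}\|$, Cauchy--Schwarz gives $|\langle\theta\boldsymbol{w},\nabla\eta\rangle| \le C_{\boldsymbol{w}}C(r)\|g\|\,\|\eta\|^{2/s}\|\nabla\eta\|^{(s-2)/s}\cdot\|\nabla\eta\| = C_{\boldsymbol{w}}C(r)\|g\|\,\|\eta\|^{2/s}\|\nabla\eta\|^{2(s-1)/s}$. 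I expect this to be the main obstacle, because the gradient factor now carries the exponent $2(s-1)/s$, which is strictly less than $2$, so a straightforward absorption into $D_*\|\nabla\eta\|^2$ is not immediate.

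The resolution is a weighted Young's inequality with the conjugate exponents suggested by the statement, namely $\epsilon = \frac{s}{2(s-1)}$ and its conjugate. Writing $\|\eta\|^{2/s}\|\nabla\eta\|^{2(s-1)/s}$ and splitting it via Young's inequality $ab \le \frac{a^{p}}{p} + \frac{b^{p'}}{p'}$ with the exponent on $\|\nabla\eta\|$ chosen as $\epsilon' = \frac{s}{s-1}$ (so that $\|\nabla\eta\|^{2(s-1)/s \cdot s/(s-1)} = \|\nabla\eta\|^2$) and the conjugate exponent $s$ acting on $\|\eta\|$, one gets a term of the form $\epsilon D_* \|\nabla\eta\|^2$ plus a constant multiple of $\|g\|^s\|\eta\|^2$; choosing the free weight so the gradient piece reads exactly $\epsilon D_*\|\nabla\eta\|^2$ produces the constant $\tilde C_{\boldsymbol{w}} = \frac{1}{s}\frac{(C_{\boldsymbol{w}}C(r))^s}{(\epsilon D_*)^{s-1}}$ in front of $\|g\|^s\|\eta\|^2$. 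After this split, the total gradient contribution is $\bigl(\epsilon D_* + \tfrac{1}{2}\mu c_0^2\hbar^2 - D_*\bigr)\|\nabla\eta\|^2$; since $\epsilon < 1$ and \cref{eq:muhbarbound} holds, this is nonpositive and may be discarded, leaving the differential inequality $\frac{\mathrm{d}}{\mathrm{d}t}\|\eta\|^2 \le \bigl(2\tilde C_{\boldsymbol{w}}\|g\|^s - \mu\bigr)\|\eta\|^2$. Gr\"onwall's inequality then gives \cref{eq:errorbound} with $\xi = 2\tilde C_{\boldsymbol{w}}\|g\|^s - \mu$, completing the proof.
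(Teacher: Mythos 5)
Your overall route coincides with the paper's: test with $\psi=\eta$, handle the feedback term by adding and subtracting $\mu\langle\eta,\eta\rangle$ and using \cref{eq:sparsebound}, discard $\langle\eta\,\kappa(\hat\theta)\nabla\hat p,\nabla\eta\rangle + \langle q\eta,\eta\rangle = \frac{1}{2}\langle g+2q,\eta^2\rangle \ge 0$ (note the paper justifies this identity with a density argument, since $\eta^2$ is not a priori an admissible test function in \cref{eq:davarform2}; your bare ``integrating by parts'' glosses this), and control $\langle\theta\boldsymbol{w},\nabla\eta\rangle$ via \cref{eq:errw} of \Cref{thm:wconv} followed by a weighted Young inequality. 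The gap is in your final absorption step, and it is genuine: you allocate a gradient piece of exactly $\epsilon D_*\|\nabla\eta\|^2$ to the advective term, with $\epsilon = \frac{s}{2(s-1)}$, and claim that $\epsilon D_* + \frac{1}{2}\mu c_0^2\hbar^2 - D_* \le 0$ follows from $\epsilon<1$ and \cref{eq:muhbarbound}. It does not: $\frac{s}{2(s-1)} > \frac{1}{2}$ for every $s>2$, so the remaining diffusion margin $(1-\epsilon)D_*$ is strictly less than $\frac{D_*}{2}$, whereas \cref{eq:muhbarbound} permits $\frac{1}{2}\mu c_0^2\hbar^2$ to be arbitrarily close to $\frac{D_*}{2}$. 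Concretely, with $s=4$ (so $\epsilon=\frac{2}{3}$) and $\mu c_0^2\hbar^2 = \frac{9}{10}D_*$, your gradient coefficient equals $\frac{7}{60}D_* > 0$; since nothing bounds $\|\nabla\eta\|^2$ in terms of $\|\eta\|^2$, the Gr\"onwall step collapses. As written, your argument requires the stronger hypothesis $\mu c_0^2\hbar^2 \le \frac{s-2}{s-1}D_*$, which is not what the lemma assumes.

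The root cause is an inconsistent pairing of the two constants in the weighted Young inequality. If the gradient piece is forced to be $\epsilon D_*\|\nabla\eta\|^2$, the companion $\|\eta\|^2$-constant is $\tilde{C}_{\boldsymbol{w}}\left(\frac{s-1}{s}\right)^{s-1}\|g\|^s$, not $\tilde{C}_{\boldsymbol{w}}\|g\|^s$; conversely, the weight that produces exactly $\tilde{C}_{\boldsymbol{w}} = \frac{1}{s}\frac{(C_{\boldsymbol{w}}C(r))^s}{(\epsilon D_*)^{s-1}}$ puts only $\frac{(s-1)\epsilon}{s}D_* = \frac{D_*}{2}$ in front of $\|\nabla\eta\|^2$. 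Use that pairing and the accounting closes: the total gradient coefficient becomes $\frac{D_*}{2} + \frac{\mu c_0^2\hbar^2}{2} - D_* = \frac{1}{2}\left(\mu c_0^2\hbar^2 - D_*\right) < 0$ by \cref{eq:muhbarbound}, yielding $\frac{\mathrm{d}}{\mathrm{d}t}\|\eta\|^2 \le \xi\|\eta\|^2$ and then \cref{eq:errorbound}. This is exactly what the paper's two-step splitting achieves: first $|\langle\theta\boldsymbol{w},\nabla\eta\rangle| \le \frac{1}{2\epsilon D_*}\|\boldsymbol{w}\|^2 + \frac{\epsilon D_*}{2}\|\nabla\eta\|^2$, then a second weighted Young applied to $\|\boldsymbol{w}\|^2$ contributing $\frac{s-2}{2s}\epsilon D_*\|\nabla\eta\|^2$, the two gradient pieces summing to $\frac{(s-1)\epsilon}{s}D_* = \frac{D_*}{2}$.
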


\begin{proof}
We replace $\psi$ in the first equation of \cref{eq:DifferenceProblem} by $\eta$, use 
 \cref{as:dif,as:q} (so that $D_* \|\nabla \eta\|^2 \le \langle D \nabla \eta , \nabla \eta \rangle$ to get
\begin{equation}\label{eq:diffeqtest}
\frac{1}{2} \frac{\mathrm{d}}{\mathrm{d} t} \| \eta \|^2 + D_* \|\nabla \eta\|^2 + I_3(\eta) \le 
I_1(\eta) + I_2(\eta),
\end{equation}
where
\begin{equation} \label{eq:III}
I_1(\zeta) = \langle  -\mu P_{\hbar}(\zeta), \zeta \rangle, ~~
I_2(\zeta) = \langle \theta \boldsymbol{w}, \nabla \zeta  \rangle, ~\text{and}~
I_3(\zeta) = \langle \zeta \kappa(\hat{\theta}) \nabla \hat{p}, \nabla \zeta \rangle + \langle q \zeta,\zeta \rangle.
\end{equation}

Adding and subtracting $\mu \langle \eta,\eta \rangle$, using \cref{eq:sparsebound} and 
Cauchy-Schwarz and Young's inequalities gives
\begin{equation}\label{eq:ctrbound}
\begin{aligned}
I_1(\eta) &= \mu \langle \eta - P_{\hbar}(\eta), \eta\rangle - \mu \| \eta \|^2\\
&\le \mu \|P_{\hbar}(\eta) - \eta\|  \|\eta\| - \mu \|\eta\|^2\\
&\le \frac{\mu c_0^2 \hbar^2}{2} \| \nabla \eta\|^2 - \frac{\mu}{2}\|\eta\|^2.
\end{aligned}
\end{equation}

Since $0 \le \theta \le 1$ almost everywhere in $\Omega \times [0,T]$ we conclude by using Cauchy-Schwarz and Young's inequalities that
\begin{equation}\label{eq:mixedadvecbound}
 I_2(\eta) \le |\left\langle  \theta \boldsymbol{w}, \nabla \eta \right\rangle| \le
\|\boldsymbol{w}\| \, \|\nabla \eta \|
\le \frac{1}{2 \epsilon D_*} \|\boldsymbol{w}\|^2 + \frac{\epsilon D_*}{2} \|\nabla \eta \|^2, ~~\text{ for some } \epsilon>0. 
\end{equation}
Recognizing the expression of $\boldsymbol{w}$ in \cref{eq:errornich}, we may use
\cref{eq:errw} in \Cref{thm:wconv} along with Young's inequality to perform the following estimate:
\begin{equation} \label{eq:ijkl}
\begin{aligned}
\| \boldsymbol{w} \|^2 &\le \left( C_{\boldsymbol{w}} C(r) \, \| g \| \, \| \eta \|^{2/s}  \, \| \nabla \eta \|^{(s-2)/s} \right)^2\\
&= \left( \left(C_{\boldsymbol{w}} C(r) \, \| g \| \right)^{s} \| \eta \|^{2} \right)^{2/s}  \,
\left(\| \nabla \eta \|^2 \right)^{(s-2)/s}\\
&\le \frac{2}{s} \frac{(C_{\boldsymbol{w}} C(r) \| g \|)^s}{(\epsilon D_*)^{s-2}} \,  \| \eta \|^2 + \frac{s-2}{s} (\epsilon D_* \| \nabla \eta \|)^2.
\end{aligned}
\end{equation}
Combining this last estimate with \cref{eq:mixedadvecbound} yields
\begin{equation}
 I_2(\eta) \le \tilde{C}_{\boldsymbol{w}}  \| g \|^s \,  \| \eta \|^2 + \frac{s-2}{2s} \epsilon D_* \, \| \nabla \eta \|^2, \text{ where }
 \tilde{C}_{\boldsymbol{w}} = \frac{1}{s} \frac{(C_{\boldsymbol{w}} C(r))^s}{(\epsilon D_*)^{s-1}}
\end{equation}

An application of density argument such as done in \Cref{sec:appendix1} gives
$\langle \eta \kappa(\hat{\theta}) \nabla \hat{p}, \nabla \eta \rangle = \frac{1}{2} \langle g , \eta^2 \rangle$, which further implies
\begin{equation}  \label{eq:myI33}
 I_3(\eta)  =  \frac{1}{2} \langle g , \eta^2 \rangle + \langle q \eta,\eta \rangle =
 \frac{1}{2} \langle g + 2 q, \eta^2 \rangle \ge 0,
\end{equation}
where we have used \cref{eq:infbdassumptions}.

Collecting all the estimates for $I_j(\eta)$, $j=1,2,3$ back to \cref{eq:diffeqtest} gives
\begin{equation}\label{eq:etainequalityexpanded}
\begin{aligned}
 \frac{\mathrm{d}}{\mathrm{d} t} \| \eta \|^2 + (c(\epsilon) D_* - & \mu c_0^2 {\hbar}^2 ) \| \nabla \eta\|^2 \le  \xi \|\eta\|^2.
\end{aligned}
\end{equation}
where
\begin{equation} \label{eq:ttt}
\xi = \left(2\tilde{C}_{\boldsymbol{w}} \| g \|^s  - \mu\right) \text{ and }
c(\epsilon) = 2 - \frac{2(s-1) \epsilon}{s}.
\end{equation}
Choose $\epsilon = \frac{s}{2(s-1)}$ so that $c(\epsilon) = 1$. This choice is well defined since $s-1>0$, due to the fact that $1/r+1/s=1/2$, i.e., $r,s>2$.
Using the bound \cref{eq:muhbarbound} in \cref{eq:etainequalityexpanded} yields
\begin{equation}\label{eq:etainequality}
 \frac{\mathrm{d}}{\mathrm{d} t} \| \eta \|^2_{2} \le \xi \|\eta\|_2^2, 
\end{equation}
from which the proof is completed.
\qed \end{proof}

Without further specifications on components involved in $\xi$,
the preceding lemma guarantees the boundedness of $\| \eta \| = \| \theta - \hat{\theta} \|$ in finite time as long as $\xi$ remains bounded. However, notice that $\mu$ in the expression of $\xi$ is a free parameter, so there is a certain degree of flexibility to choose $\mu$ that renders a negative value for $\xi$.

\begin{theorem} \label{thm:totconv}
Adopt hypotheses in \Cref{thm:wconv} and \Cref{thm:convergence}. Choose $\mu > 0$ such that $\xi<0$ in \Cref{thm:convergence}. Then 
\begin{equation} \label{eq:errone}
\| \theta(\cdot,t) - \hat{\theta}(\cdot,t) \| \to 0 \text{ as } t \to \infty. 
\end{equation}
Furthermore,
\begin{equation} \label{eq:errtwo}
\| [\kappa(\theta) \nabla p - \kappa(\hat{\theta}) \nabla \hat{p}](\cdot,t) \| \to 0 \text{ as } t \to \infty. 
\end{equation}
\end{theorem}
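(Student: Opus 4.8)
The plan is to treat the two convergence statements separately: the first is an immediate consequence of \Cref{thm:convergence}, while the second is reduced to it through the pressure-discrepancy estimate of \Cref{thm:wconv}. For \cref{eq:errone}, I would simply note that the hypothesis selecting $\mu>0$ so that $\xi<0$ makes the right-hand side of \cref{eq:errorbound} decay: from $\|\eta(\cdot,t)\|^2\le\|\eta_0\|^2 e^{\xi t}$ with $\xi<0$ we get $\|\eta(\cdot,t)\|^2\to0$, and taking square roots yields $\|\theta(\cdot,t)-\hat\theta(\cdot,t)\|=\|\eta(\cdot,t)\|\to0$, which is exactly \cref{eq:errone}. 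No additional estimate is required here.

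For \cref{eq:errtwo}, I would recognize the quantity in question as $\boldsymbol{w}=\kappa(\theta)\nabla p-\kappa(\hat\theta)\nabla\hat p$ from \cref{eq:errornich} and apply \cref{eq:errw} of \Cref{thm:wconv} for almost every $t$, taking $z_1=\theta(\cdot,t)$, $z_2=\hat\theta(\cdot,t)$ with the associated pressures $u_1=p$, $u_2=\hat p$, so that $z_1-z_2=\eta$. This gives
\begin{equation*}
\|\boldsymbol{w}(\cdot,t)\| \le C_{\boldsymbol{w}}\,C(r)\,\|g\|\,\|\eta(\cdot,t)\|^{2/s}\,\|\nabla\eta(\cdot,t)\|^{(s-2)/s}.
\end{equation*}
The first factor $\|\eta(\cdot,t)\|^{2/s}$ already tends to $0$ by the first part, so the whole task collapses to controlling the gradient factor $\|\nabla\eta(\cdot,t)\|^{(s-2)/s}$.

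To control the gradient, I would return to the energy inequality \cref{eq:etainequalityexpanded}, which with $c(\epsilon)=1$ reads $\tfrac{\mathrm{d}}{\mathrm{d}t}\|\eta\|^2+\nu\|\nabla\eta\|^2\le\xi\|\eta\|^2$ with $\nu=D_*-\mu c_0^2\hbar^2>0$ by \cref{eq:muhbarbound}. Integrating over a window $[t,t+1]$ and discarding the nonpositive term $\xi\int_t^{t+1}\|\eta\|^2\,\mathrm{d}s\le0$ (since $\xi<0$) yields $\nu\int_t^{t+1}\|\nabla\eta\|^2\,\mathrm{d}s\le\|\eta(\cdot,t)\|^2\le\|\eta_0\|^2 e^{\xi t}$. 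Combining this with the Young-inequality splitting $\|\boldsymbol{w}\|^2\le C_1\|\eta\|^2+C_2\|\nabla\eta\|^2$ already performed in \cref{eq:ijkl} shows that $\int_t^{t+1}\|\boldsymbol{w}\|^2\,\mathrm{d}s\to0$, i.e. the pressure discrepancy vanishes in a time-averaged sense.

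The main obstacle is upgrading this to the genuinely pointwise limit \cref{eq:errtwo}. The cleanest route is to establish a uniform-in-time bound $\sup_t\|\nabla\eta(\cdot,t)\|\le M$: the displayed estimate would then give $\|\boldsymbol{w}(\cdot,t)\|\le C_{\boldsymbol{w}}C(r)\|g\|\,M^{(s-2)/s}\|\eta(\cdot,t)\|^{2/s}\to0$ directly. Such a uniform $H^1$ bound on $\eta$ does not follow from the $L^2_T(H^1_{\text{D}})$ estimates already in hand and would require an $H^1$-level energy estimate (testing against $-\Delta\eta$ or $\partial_t\eta$); this is precisely where the persistent nonlinear coupling through $\kappa(\hat\theta)\nabla\hat p$ makes the analysis delicate, so I expect this to be the crux of the argument.
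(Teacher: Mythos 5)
Your treatment of \cref{eq:errone} is exactly the paper's argument and is complete: $\xi<0$ in \cref{eq:errorbound} gives exponential decay of $\|\eta(\cdot,t)\|$. For \cref{eq:errtwo} you also follow the paper's route — reduce to \cref{eq:errw} of \Cref{thm:wconv} and try to control the factor $\|\nabla\eta(\cdot,t)\|^{(s-2)/s}$ from the energy inequality \cref{eq:etainequalityexpanded} — and your windowed estimate $(D_*-\mu c_0^2\hbar^2)\int_t^{t+1}\|\nabla\eta\|^2\,\mathrm{d}s\le\|\eta(\cdot,t)\|^2\le\|\eta_0\|^2e^{\xi t}$ is correct (indeed sharper than a global-in-time bound). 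But, as you say yourself, this only yields decay of $\|\boldsymbol{w}\|^2$ in a time-averaged sense: square-integrability, or even exponentially decaying window averages, of $\|\nabla\eta(\cdot,t)\|^2$ does not prevent $\|\nabla\eta(\cdot,t)\|$ from spiking on short intervals at arbitrarily late times, so the product bound $\|\boldsymbol{w}(\cdot,t)\|\le C_{\boldsymbol{w}}C(r)\|g\|\,\|\eta(\cdot,t)\|^{2/s}\|\nabla\eta(\cdot,t)\|^{(s-2)/s}$ cannot be closed pointwise. That is a genuine gap in the proposal: \cref{eq:errtwo} is a statement for every $t\to\infty$, and you do not prove it.

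What you should know, however, is that the crux you identified is precisely where the paper's own proof is weakest. The paper integrates \cref{eq:etainequalityexpanded} over $(0,T)$, invokes \cref{eq:errorbound}, and asserts $(D_*-\mu c_0^2\hbar^2)\int_0^T\|\nabla\eta(\cdot,t)\|^2\,\mathrm{d}t\le\|\eta_0\|^2e^{\xi T}$, concluding that this integral "tends to $0$ as $T\to\infty$, so $\|\nabla\eta(\cdot,t)\|$ remains bounded as $t\to\infty$," and then feeds that boundedness into \cref{eq:errw}. Two steps there do not withstand scrutiny: (i) since $\xi<0$, substituting the upper bound $\|\eta(\cdot,t)\|^2\le\|\eta_0\|^2e^{\xi t}$ into the term $\xi\int_0^T\|\eta\|^2\,\mathrm{d}t$ reverses the inequality, so the displayed estimate does not follow (the legitimate conclusion is only $(D_*-\mu c_0^2\hbar^2)\int_0^\infty\|\nabla\eta\|^2\,\mathrm{d}t\le\|\eta_0\|^2$, obtained exactly as you did by discarding the nonpositive term); and (ii) the inference from time-integrability of $\|\nabla\eta\|^2$ to pointwise boundedness of $\|\nabla\eta(\cdot,t)\|$ as $t\to\infty$ is the very $L^2_t$-to-$L^\infty_t$ upgrade you flagged as missing, and the paper supplies no higher-order (uniform-in-time $H^1$) estimate to justify it. So your diagnosis — that a uniform bound $\sup_t\|\nabla\eta(\cdot,t)\|\le M$ is what is really needed, and that obtaining it runs into the nonlinear coupling through $\kappa(\hat\theta)\nabla\hat p$ — is accurate; neither your proposal nor, rigorously, the paper's proof closes this step.
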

\begin{proof}
Establishing this statement is straightforward from the inequalities in  \Cref{thm:convergence}. In particular, we choose $\mu>0$ such that
\begin{equation}
\mu > 2\tilde{C}_{\boldsymbol{w}}  \| g \|^s, 
\end{equation}
which gives $\xi<0$. Inequality in \cref{eq:errorbound} of \Cref{thm:convergence} gives \cref{eq:errone}.

Next, with the same choice of $\epsilon$ to give $c(\epsilon)=1$ in \cref{eq:ttt} and integrating  \cref{eq:etainequalityexpanded} over $(0,T)$, one obtains
\begin{equation}
 \| \eta(\cdot,t) \|^2 - \| \eta_0 \|^2 +
 (D_* - \mu c_0^2 {\hbar}^2 ) \int_0^T \| \nabla \eta(\cdot,t) \|^2  \, {\rm d} t \le  
 \xi  \int_0^T \|\eta(\cdot,t)\|^2 \, {\rm d} t,
\end{equation}
from which inequality in \Cref{eq:errorbound} is used to deduce 
\begin{equation}
(D_* - \mu c_0^2 {\hbar}^2 ) \int_0^T \| \nabla \eta(\cdot,t) \|^2  \, {\rm d} t \le
 \| \eta_0 \|^2 \left( 1 + \xi  \int_0^T  e^{\xi t} \, {\rm d} t \right) =  \| \eta_0 \|^2 e^{\xi T}.
\end{equation}
This last inequality implies that $\int_0^T \| \nabla \eta(\cdot,t) \|^2  \, {\rm d} t \to 0$ as $T \to \infty$, so $\| \nabla \eta(\cdot,t) \|$ remains bounded as $t \to \infty$. This fact together with \cref{eq:errone} are used in \cref{eq:errw} of \Cref{thm:wconv} to get \cref{eq:errtwo}. This completes the proof.
\qed
\end{proof}

\subsection{Physical Relevance of the Data Assimilation Solution}\label{Sec3.3}
After establishing its convergence, the next task is to look into the physical relevance 
of $\hat{\theta}$. As seen in \Cref{sec:appendix1}, under appropriate assumptions, the true concentration, $\theta$, maintains its value between 0 and 1 in $\Omega \times [0,T]$. The next lemma and theorem confirms the same behavior for $\hat{\theta}$ as long as $\mu$ is chosen in accordance with the setting in \Cref{thm:totconv}, and a correct choice of initial condition $\hat{\theta}_0$ is used. It is then followed by a theorem that establishes the uniqueness of this $\hat{\theta}$.

\begin{lemma} \label{thm:etanonnegative}
Assume all hypotheses in \Cref{thm:totconv} and choose $\hat{\theta}_0$ such that
$\eta_0 = \theta_0 - \hat{\theta}_0 \ge 0$. Then
\begin{equation}
(\theta(\boldsymbol{x},t) - \hat{\theta}(\boldsymbol{x},t) ) \ge 0, ~~\text{for almost every}~~(\boldsymbol{x},t) \in \Omega \times [0,T].
\end{equation}
\end{lemma}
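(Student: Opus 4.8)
The plan is to run a negative-part (truncation) energy estimate on the difference system \cref{eq:DifferenceProblem}, mimicking the computation in \Cref{thm:convergence} but testing against the negative part of $\eta$ rather than $\eta$ itself. Write $\eta^+=\max\{\eta,0\}$ and $\eta^-=\max\{-\eta,0\}$, so $\eta=\eta^+-\eta^-$, $\eta^-\ge0$, $\eta\,\eta^-=-(\eta^-)^2$, and $\nabla\eta^-=-\nabla\eta$ on $\{\eta<0\}$ while $\eta^-\equiv0$, $\nabla\eta^-\equiv0$ on $\{\eta\ge0\}$. Since $0\le\theta\le1$ and $\hat\theta$ is bounded under the present hypotheses (so $\eta$ is bounded) and $\eta(\cdot,t)\in H^1_{\text D}$ for almost every $t$, the usual truncation lemmas give $\eta^-\in H^1_{\text D}$ and $(\eta^-)^2\in H^1_{\text D}$, so both are admissible test functions. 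First I would take $\psi=\eta^-$ in the first line of \cref{eq:DifferenceProblem}. The time term yields $\langle\partial_t\eta,\eta^-\rangle=-\tfrac12\frac{d}{dt}\|\eta^-\|^2$, and the diffusion term gives $\langle D\nabla\eta,\nabla\eta^-\rangle=-\int_\Omega D|\nabla\eta^-|^2\le-D_*\|\nabla\eta^-\|^2$ by \cref{as:dif}.

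Next I would dispatch the pressure-coupling and reaction terms exactly as the identity \cref{eq:myI33} is obtained: on $\{\eta<0\}$ one has $\eta\,\nabla\eta^-=-\tfrac12\nabla(\eta^-)^2$, so $\langle\eta\,\kappa(\hat\theta)\nabla\hat p,\nabla\eta^-\rangle=-\tfrac12\langle\kappa(\hat\theta)\nabla\hat p,\nabla(\eta^-)^2\rangle$; inserting $\varphi=(\eta^-)^2$ into the pressure equation \cref{eq:davarform2} turns this into $-\tfrac12\langle g,(\eta^-)^2\rangle$, and combined with $\langle q\eta,\eta^-\rangle=-\langle q,(\eta^-)^2\rangle$ it produces $-\tfrac12\langle g+2q,(\eta^-)^2\rangle\le0$ by \cref{eq:infbdassumptions}. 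For the nudging term I would write $\mu\langle P_\hbar(\eta),\eta^-\rangle=\mu\langle P_\hbar(\eta^+),\eta^-\rangle-\mu\langle P_\hbar(\eta^-),\eta^-\rangle$ and use the sign-preservation property \cref{eq:Phsign} ($P_\hbar(\eta^\pm)\ge0$) with the approximation bound \cref{eq:sparsebound}: the piece $-\mu\langle P_\hbar(\eta^-),\eta^-\rangle$ supplies the crucial damping $-\mu\|\eta^-\|^2$, after adding and subtracting $\mu\|\eta^-\|^2$ and absorbing $\mu c_0\hbar\|\eta^-\|_1\|\eta^-\|$ via Young's inequality and \cref{eq:muhbarbound}, exactly as in \cref{eq:ctrbound}. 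The advection term $\langle\theta\boldsymbol{w},\nabla\eta^-\rangle$ I would treat using $0\le\theta\le1$ and the estimate \cref{eq:errw} of \Cref{thm:wconv}, with Young's inequality peeling off a fraction of $D_*\|\nabla\eta^-\|^2$ as in \cref{eq:mixedadvecbound,eq:ijkl}.

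Assembling these estimates and using \cref{eq:muhbarbound} to keep the residual diffusion coefficient positive, the target is a Gr\"onwall-type inequality $\frac{d}{dt}\|\eta^-\|^2\le\lambda(t)\,\|\eta^-\|^2$ with $\lambda$ locally integrable. Since $\eta_0\ge0$ forces $\eta^-(\cdot,0)=0$, Gr\"onwall's inequality then yields $\|\eta^-(\cdot,t)\|=0$ for every $t$, i.e. $\eta\ge0$ almost everywhere, which is the assertion.

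The main obstacle is that two of the terms are genuinely nonlocal in $\eta$ rather than in $\eta^-$: the interpolant $P_\hbar(\eta^+)$ couples the positive and negative parts across its stencil, and the velocity discrepancy $\boldsymbol{w}$ in the advection term is determined by the full $\eta=\theta-\hat\theta$ through the two pressure solves, so neither can be bounded by $\|\eta^-\|$ alone via \cref{eq:errw}. The only favorable structural facts I can lean on are that $\langle P_\hbar(\eta^+),\eta^-\rangle=\langle P_\hbar(\eta^+)-\eta^+,\eta^-\rangle$ is $O(\hbar)$ small (because $\eta^+\eta^-=0$ pointwise), and that $\boldsymbol{w}$ is divergence free, $\langle\boldsymbol{w},\nabla\varphi\rangle=0$ (the second line of \cref{eq:DifferenceProblem}), which lets the derivative be transferred off $\eta^-$ onto the bounded factor $\theta$. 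Forcing these two contributions to close into the coefficient $\lambda(t)\|\eta^-\|^2$—rather than leaving an additive forcing that would spoil the conclusion $\eta^-\equiv0$—is the delicate step, and it is precisely here that one must invoke the a priori decay and the space-time bounds on $\eta$ and $\nabla\eta$ furnished by \Cref{thm:convergence} and \Cref{thm:totconv} under the present choice of $\mu$.
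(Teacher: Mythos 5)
Your proposal takes the same route as the paper's own proof: test the difference system \cref{eq:DifferenceProblem} against the negative part of $\eta$ (the paper uses $\psi=-\eta^-$, you use $\psi=\eta^-$; these are equivalent), treat the time, diffusion, reaction and pressure-coupling terms exactly as in \Cref{thm:convergence} (your reduction of $\langle \eta\kappa(\hat\theta)\nabla\hat p,\nabla\eta^-\rangle$ to $-\tfrac12\langle g,(\eta^-)^2\rangle$ via $\varphi=(\eta^-)^2$ in \cref{eq:davarform2} is precisely the paper's handling of $I_3$), and close with Gr\"onwall from $\eta^-(\cdot,0)=0$. Up to the two terms you flag, your execution is correct and matches the paper.

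But your attempt does not close, and you say so yourself; this is a genuine gap. The cross term $\mu\langle P_\hbar(\eta^+),\eta^-\rangle$ and the advection term $\langle\theta\boldsymbol{w},\nabla\eta^-\rangle$ — where $\boldsymbol{w}$ is controlled by \cref{eq:errw} only through $\|\eta\|$ and $\|\nabla\eta\|$ — enter the energy inequality as forcing terms in the full $\eta$, not as multiples of $\|\eta^-\|^2$ or $\|\nabla\eta^-\|^2$. The remedy you gesture at (invoking the decay furnished by \Cref{thm:convergence} and \Cref{thm:totconv}) cannot rescue the conclusion: once the inequality has the form $\frac{\mathrm{d}}{\mathrm{d}t}\|\eta^-\|^2 \le \lambda\|\eta^-\|^2 + F(t)$ with $F\not\equiv 0$, Gr\"onwall with zero initial data yields only $\|\eta^-(t)\|^2 \le \int_0^t e^{\lambda(t-s)}F(s)\,\mathrm{d}s$; exponential decay of $F$ gives smallness of $\eta^-$, never the exact vanishing $\eta^-\equiv 0$ that the lemma asserts. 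It is worth noting that the obstacle you identify is real and is exactly the point the paper's proof elides: the paper claims the estimates of $I_1$ and $I_2$ carry over ``verbatim'' with $\eta^-$ in place of $\eta$, but after testing, the nudging contribution is $\mu\langle P_\hbar(\eta),\eta^-\rangle = \mu\langle P_\hbar(\eta^+),\eta^-\rangle - \mu\langle P_\hbar(\eta^-),\eta^-\rangle$, and the discarded piece $\mu\langle P_\hbar(\eta^+),\eta^-\rangle$ is nonnegative by \cref{eq:Phsign} — the wrong sign to drop — while $\|\boldsymbol{w}\|$ in $I_2$ simply cannot be bounded by $\eta^-$ alone. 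So your diagnosis doubles as a legitimate criticism of the paper's argument, but your proposal, as written, does not prove the lemma.
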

\begin{proof}
Set $\eta = \theta - \hat{\theta}$ and write $\eta = \eta^+ - \eta^-$, where $\eta^{+} = \text{max}\{0,\eta\}$ and $\eta^{-} = \text{max}\{0,-\eta\}$ and
$$
\nabla \eta^+ =
\begin{cases}
\nabla \eta  &\text{ if } \eta>0,\\
0 & \text{ if }  \eta\le 0,
\end{cases}
\text{ and }
\nabla \eta^- =
\begin{cases}
0 &\text{ if } \eta>0,\\
-\nabla \eta & \text{ if }  \eta\le 0.
\end{cases}
$$
Using the above definition, it is clear that $-\eta \eta^- = -(\eta^+ - \eta^-) \eta^- = (\eta^-)^2$. By the same token, $\nabla \eta \cdot \nabla \eta^{-} = | \nabla \eta^- |^2$, and $\eta \nabla \eta^- = \eta^- \nabla \eta^-$. Thus, we may use $\psi = -\eta^-$ in the first equation of \cref{eq:DifferenceProblem} to get
\begin{equation}
\frac{1}{2} \frac{\mathrm{d}}{\mathrm{d} t} \| \eta^- \|^2 + D_* \|\nabla \eta^-\|^2 + I_3(\eta^-) \le 
I_1(\eta^-) + I_2(\eta^-),
\end{equation}
where $I_j(\zeta)$, $j=1,2,3$ are as described in \cref{eq:III}. Much of the remaining proof are verbatim of shown in the proof of \Cref{thm:convergence}. In particular, following the exact estimates of $I_j$, $j=1,2,3$ in the proof of that lemma, we arrive at
\begin{equation}
 \frac{\mathrm{d}}{\mathrm{d} t} \| \eta^- \|^2 \le \xi \|\eta^- \|^2 \le 0,
\end{equation}
since $\xi<0$ has been chosen as stated in \Cref{thm:totconv}. Integrating over $(0,s) \subset [0,T]$ gives $\| \eta^-(\cdot,s) \|^2 \le \| \eta^-(\cdot,0) \|^2 = \| \eta^-_0 \|^2$. By the stated choice of $\hat{\theta}_0$,  $\eta_0 \ge 0$, so this means $\| \eta^-_0 \| = 0$. It follows that  $\| \eta^-(\cdot,s) \| = 0$, and thus $\eta^-(\boldsymbol{x},s) = 0$ for almost every $(\boldsymbol{x},s) \in \Omega \times [0,T]$. This implies that $\eta(\boldsymbol{x},s) \ge 0$.
\qed
\end{proof}

\begin{theorem} \label{eq:hatthetabdd}
Assume hypotheses in \Cref{thm:totconv} and choose $\hat{\theta}_0 \ge 0$ such that
$\eta_0 = \theta_0 - \hat{\theta}_0 \ge 0$. Then
\begin{equation}
\hat{\theta}(\boldsymbol{x},t) \in [0,1], ~~\text{for almost every}~~(\boldsymbol{x},t) \in \Omega \times [0,T].
\end{equation}
\end{theorem}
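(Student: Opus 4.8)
The plan is to establish the two one-sided bounds $\hat{\theta} \le 1$ and $\hat{\theta} \ge 0$ separately, the first being essentially free and the second requiring an energy estimate on the negative part of $\hat{\theta}$.

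For the upper bound I would simply combine \Cref{thm:etanonnegative} with the maximum principle for the true concentration. Under the present hypotheses \Cref{thm:etanonnegative} yields $\eta = \theta - \hat{\theta} \ge 0$ almost everywhere, so $\hat{\theta} \le \theta$; since the $L^\infty$ bound $\theta \le 1$ almost everywhere on $\Omega \times [0,T]$ is established by the maximum-principle argument of \Cref{sec:appendix1}, the inequality $\hat{\theta} \le 1$ follows at once.

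The substance of the proof is the lower bound $\hat{\theta} \ge 0$, which I would obtain by a maximum-principle energy estimate that closely mirrors the proof of \Cref{thm:etanonnegative}. Writing $\hat{\theta}^- = \max\{0,-\hat{\theta}\}$, I would test \cref{eq:davarform1} with $\psi = -\hat{\theta}^-$. The time-derivative term produces $\tfrac12 \tfrac{\mathrm{d}}{\mathrm{d}t}\|\hat{\theta}^-\|^2$, while the diffusion term contributes $\int_\Omega D|\nabla\hat{\theta}^-|^2 \ge 0$ after using $\nabla\hat{\theta}\cdot\nabla(-\hat{\theta}^-) = |\nabla\hat{\theta}^-|^2$. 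For the advection and reaction terms I would invoke the pointwise identity $\hat{\theta}\,\nabla(-\hat{\theta}^-) = \tfrac12\nabla\big((\hat{\theta}^-)^2\big)$ and then test the elliptic equation \cref{eq:davarform2} with $\varphi = (\hat{\theta}^-)^2$ (legitimate by the same density argument used in \Cref{sec:appendix1}); together with the $q$-term this collapses to $\tfrac12\langle g + 2q, (\hat{\theta}^-)^2\rangle \ge 0$ by assumption \cref{eq:infbdassumptions}. On the right-hand side, $-\langle f, \hat{\theta}^-\rangle \le 0$ because $f \ge 0$ by \cref{eq:infbdassumptions}, while the feedback term rewrites, using linearity, as $\mu\langle P_\hbar(\hat{\theta}-\theta),\hat{\theta}^-\rangle = -\mu\langle P_\hbar(\eta),\hat{\theta}^-\rangle \le 0$. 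Collecting these signs leaves $\tfrac{\mathrm{d}}{\mathrm{d}t}\|\hat{\theta}^-\|^2 \le 0$; integrating over $(0,s)$ and using that the prescribed $\hat{\theta}_0 \ge 0$ forces $\hat{\theta}_0^- = 0$, I would conclude $\|\hat{\theta}^-(\cdot,s)\| = 0$, i.e. $\hat{\theta} \ge 0$ almost everywhere.

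The step I expect to carry the real weight is controlling the feedback term, since this is the only ingredient genuinely absent from the classical maximum principle for the uncontrolled model. Its favorable sign rests on two facts acting in tandem: the nonnegativity $\eta \ge 0$ supplied by \Cref{thm:etanonnegative} (which is precisely why that lemma is placed immediately before this theorem), and the sign-preservation property \cref{eq:Phsign} of the interpolation operator, which guarantees $P_\hbar(\eta) \ge 0$ whenever $\eta \ge 0$. A secondary technical point is the density argument that legitimizes $(\hat{\theta}^-)^2$ as a test function in the elliptic equation, which is dispatched exactly as in \Cref{sec:appendix1}.
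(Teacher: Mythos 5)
Your proposal is correct and follows essentially the same route as the paper: the upper bound via \Cref{thm:etanonnegative} combined with the bound $\theta \le 1$ from \Cref{sec:appendix1}, and the lower bound by testing \cref{eq:davarform1} with $\psi = -\hat{\theta}^-$, repeating the density/elliptic-equation argument of \Cref{sec:appendix1} to reduce the advection--reaction terms to $\tfrac12\langle g+2q,(\hat{\theta}^-)^2\rangle \ge 0$, and controlling the feedback term through \Cref{thm:etanonnegative} together with the sign-preservation property \cref{eq:Phsign}. Your grouping of the $f$-term separately from the $g+2q$ term is only a cosmetic difference from the paper's combined expression $\langle f + q\hat{\theta}^- + \tfrac12 g\hat{\theta}^-, \hat{\theta}^-\rangle \ge 0$.
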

\begin{proof}
Since $\eta_0 \ge 0$, \Cref{thm:etanonnegative} implies $\hat{\theta}(\boldsymbol{x},t) \le \theta(\boldsymbol{x},t)$. Moreover, because \Cref{sec:appendix1} shows that $\theta(\boldsymbol{x},t) \le 1$, it follows that $\hat{\theta}(\boldsymbol{x},t) \le 1$.  

Next we show that $\hat{\theta}(\boldsymbol{x},t) \ge 0$. To proceed, we repeat the first part of the proof in \Cref{sec:appendix1} verbatim. In this case we write $\hat{\theta} = \hat{\theta}^+ - \hat{\theta}^-$, where $\hat{\theta}^{+} = \text{max}\{0,\hat{\theta} \}$ and $\hat{\theta}^{-} = \text{max}\{0,-\hat{\theta} \}$ use $\psi = -\hat{\theta}^-$ in \eqref{eq:davarform1} to get
\begin{equation}\label{eq:hinfbdtest}
 \frac{1}{2} \frac{\mathrm{d} }{\mathrm{d} t} \| \hat{\theta}^- \|^2 +A(\hat{\theta}^-, \hat{p},\hat{\theta}^-; \theta) = -\langle f- \mu P_{\hbar}(\hat{\theta}-\theta), \hat{\theta}^- \rangle,
\end{equation} 
and after redoing much of the steps in \Cref{sec:appendix1}, we arrive at
\begin{equation}\label{eq:bbq}
 \frac{1}{2} \frac{\mathrm{d} }{\mathrm{d} t} \| \hat{\theta}^{-} \|^2  + \langle f + q {\hat{\theta}^{-} }+ \frac{1}{2} g \hat{\theta}^{-}, \hat{\theta}^{-} \rangle + \langle \mu P_{\hbar}(\theta-\hat{\theta}), \hat{\theta}^- \rangle\le 0.
\end{equation}
As in \Cref{sec:appendix1}, assumption \cref{eq:infbdassumptions} implies $\langle f + q \theta^{-} +\frac{1}{2} g \theta^{-} , \theta^{-} \rangle \ge 0$. Furthermore, because \Cref{thm:etanonnegative} guarantees that $(\theta - \hat{\theta})\ge 0$, it follows by \cref{eq:Phsign} that   $P_\hbar(\theta - \hat{\theta}) \ge 0$, and thus $\langle \mu P_{\hbar}(\theta-\hat{\theta}), \hat{\theta}^- \rangle \ge0$.  Integration of \cref{eq:bbq} over $(0,s) \subseteq [0,T]$ and application of these two inequalites give
$\| \hat{\theta}^{-}(\cdot, s) \|_{2}^2 - \| \hat{\theta}^{-}(\cdot,0) \|_{2}^2 \le 0$.
By assumption $\hat{\theta}_0(\boldsymbol{x}) \ge 0$ for almost every $\boldsymbol{x} \in \Omega$ so it follows that $\hat{\theta}^-(\boldsymbol{x},s) = 0$ for almost every $(\boldsymbol{x},s) \in \Omega \times [0,T]$. Consequently, $\hat{\theta}(\boldsymbol{x},t) \ge 0$ for almost every $(\boldsymbol{x},t) \in \Omega \times [0,T]$. This completes the proof.
\qed
\end{proof}

At this stage, we are in a position to establish the uniqueness of $(\hat{\theta},\hat{p})$. 

\begin{theorem}
Under the assumptions in \Cref{thm:existence}  and \Cref{eq:hatthetabdd}, the solution $(\hat{\theta}, \hat{p})$ of the data assimilation model \cref{eq:davarform} is unique in the class  $L_T^{\infty}\left(L^2(\Omega)\right) \cap L_T^{2}\left(H^1_D\right) \times L_T^{\infty} (H^1_{D})$.
\end{theorem}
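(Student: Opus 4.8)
The plan is to establish uniqueness by a standard contradiction-free energy argument applied to the difference of two putative solutions. Suppose $(\hat{\theta}_1, \hat{p}_1)$ and $(\hat{\theta}_2, \hat{p}_2)$ are two solutions in the stated class sharing the same initial data $\hat{\theta}_0$. I would set $\eta = \hat{\theta}_1 - \hat{\theta}_2$ and $\boldsymbol{w} = \kappa(\hat{\theta}_1)\nabla \hat{p}_1 - \kappa(\hat{\theta}_2)\nabla \hat{p}_2$, subtract the two copies of \cref{eq:davarform}, and obtain a difference system structurally identical to \cref{eq:DifferenceProblem}, except that the feedback term now carries the full weight $-\mu\langle P_\hbar(\eta), \psi\rangle$ on both the nudging channel \emph{and} through the coupling, rather than comparing against the true solution $\theta$. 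The key structural point is that the nonlinear coupling term splits in the same way: $\hat{\theta}_1 \kappa(\hat{\theta}_1)\nabla\hat{p}_1 - \hat{\theta}_2\kappa(\hat{\theta}_2)\nabla\hat{p}_2 = \hat{\theta}_1 \boldsymbol{w} + \eta\, \kappa(\hat{\theta}_2)\nabla\hat{p}_2$.

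Next I would test the difference equation with $\psi = \eta$ to obtain the energy identity
\begin{equation}
\frac{1}{2}\frac{\mathrm{d}}{\mathrm{d}t}\|\eta\|^2 + D_*\|\nabla\eta\|^2 + \langle \eta\,\kappa(\hat{\theta}_2)\nabla\hat{p}_2, \nabla\eta\rangle + \langle q\eta,\eta\rangle \le -\mu\langle P_\hbar(\eta),\eta\rangle - \langle \hat{\theta}_1\boldsymbol{w},\nabla\eta\rangle,
\end{equation}
which mirrors \cref{eq:diffeqtest}. The three terms on the left beyond the dissipation are handled exactly as the $I_3$ estimate in \Cref{thm:convergence}: the density/integration-by-parts argument gives $\langle \eta\,\kappa(\hat{\theta}_2)\nabla\hat{p}_2,\nabla\eta\rangle = \tfrac12\langle g,\eta^2\rangle$, and combined with $\langle q\eta,\eta\rangle$ and \cref{eq:infbdassumptions} this is nonnegative. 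The feedback term is bounded as in \cref{eq:ctrbound}, absorbing $\tfrac{\mu c_0^2\hbar^2}{2}\|\nabla\eta\|^2$ into the dissipation via \cref{eq:muhbarbound}. The advection term $\langle\hat{\theta}_1\boldsymbol{w},\nabla\eta\rangle$ is the one requiring the Meyers/Lipschitz machinery: since $0\le\hat{\theta}_1\le 1$ almost everywhere by \Cref{eq:hatthetabdd}, I can bound $|\langle\hat{\theta}_1\boldsymbol{w},\nabla\eta\rangle|\le\|\boldsymbol{w}\|\,\|\nabla\eta\|$ exactly as in \cref{eq:mixedadvecbound}, then invoke \cref{eq:errw} of \Cref{thm:wconv} to control $\|\boldsymbol{w}\|$ by $\|\eta\|^{2/s}\|\nabla\eta\|^{(s-2)/s}$, and finish with the same Young's-inequality splitting in \cref{eq:ijkl}.

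The upshot is a differential inequality $\frac{\mathrm{d}}{\mathrm{d}t}\|\eta\|^2 \le \xi\|\eta\|^2$ of the form \cref{eq:etainequality}, with the same $\xi$ as in \Cref{thm:convergence}. Since $\eta_0 = \hat{\theta}_0 - \hat{\theta}_0 = 0$, Gr\"onwall's inequality forces $\|\eta(\cdot,t)\|^2 \le 0$ for all $t$, whence $\hat{\theta}_1 = \hat{\theta}_2$ almost everywhere. Uniqueness of $\hat{p}$ then follows immediately from the coercivity of $B(\cdot,\cdot;\hat{\theta})$ (the Lax--Milgram argument in \Cref{lem:elliptic}): with $\hat{\theta}_1=\hat{\theta}_2$ the two elliptic problems coincide and have the same unique solution. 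The main obstacle I anticipate is \emph{justifying the density/integration-by-parts step} that yields $\langle \eta\,\kappa(\hat{\theta}_2)\nabla\hat{p}_2,\nabla\eta\rangle = \tfrac12\langle g,\eta^2\rangle$: this requires approximating $\eta^2$ by admissible test functions and using that $\hat{p}_2$ satisfies its own elliptic equation, which is exactly why the uniqueness theorem must be placed \emph{after} \Cref{eq:hatthetabdd} — the bound $0\le\hat{\theta}_1\le 1$ is what legitimizes treating the advection coefficient as bounded and lets the argument of \Cref{sec:appendix1} carry over verbatim.
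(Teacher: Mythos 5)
Your proposal is correct and follows essentially the same route as the paper's proof: form the difference system for $\hat{\eta}=\hat{\theta}_1-\hat{\theta}_2$ and $\hat{\boldsymbol{w}}$ using the same splitting of the nonlinear coupling, reuse the estimates of \Cref{thm:convergence} (with \Cref{eq:hatthetabdd} supplying $0\le\hat{\theta}_1\le 1$ for the advection term and the density argument giving $\tfrac12\langle g,\hat{\eta}^2\rangle$), and conclude by Gr\"onwall since the initial difference vanishes. The only cosmetic difference is the final pressure step, where you invoke Lax--Milgram coercivity of $B(\cdot,\cdot;\hat{\theta})$ directly, whereas the paper applies \cref{eq:erru} of \Cref{thm:wconv}; both yield $\hat{p}_1=\hat{p}_2$ immediately once $\hat{\theta}_1=\hat{\theta}_2$.
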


\begin{proof}
To the contrary, assume each $(\hat{\theta}_j, \hat{p}_j)$, $j=1, 2$ is the solution to \cref{eq:davarform}. Designating $\hat{\eta} =\hat{\theta}_1-\hat{\theta}_2$, and $\hat{\boldsymbol{w}} = \kappa( \hat{\theta}_1)  \nabla \hat{p}_1- \kappa( \hat{\theta}_2)  \nabla \hat{p}_2$, they are governed by
\begin{equation}\label{eq:Differencehat}
\hspace*{-0.15cm}\begin{cases}
~\displaystyle \langle \partial_t \hat{\eta}, \psi \rangle + \langle D \nabla \hat{\eta}, \nabla \psi \rangle + \langle \hat{\theta}_1 \hat{\boldsymbol{w}}, \nabla \psi\rangle 
+ \langle \hat{\eta} \kappa(\hat{\theta}_2) \nabla \hat{p}_2 ,\nabla \psi\rangle
+ \langle q \hat{\eta} , \psi\rangle \hspace*{-0.04cm} = \hspace*{-0.04cm}- \mu \langle P_\hbar(\hat{\eta}), \psi \rangle,\\
~ \langle \hat{\boldsymbol{w}}, \nabla \varphi \rangle = 0,\\
\displaystyle \langle \hat{\eta} (\cdot, 0),\psi \rangle = 0,
\end{cases}
\end{equation}
for every $\varphi \in H^1_{\text{D}}$ and $\psi \in H^1_{\text{D}}$. Notice the structural resemblance of \cref{eq:Differencehat} to \cref{eq:DifferenceProblem}, so the same estimation procedures done \Cref{thm:convergence} can be applied here. With the help of \Cref{eq:hatthetabdd} and those procedures, we arrive at
\begin{equation*}
\|\hat{\eta}(\cdot,t)\|^2 \le \|\hat{\eta}(\cdot,0)\|^2 \, e^{\xi t} = 0,
\end{equation*}
which infers that for almost every $(\boldsymbol{x},t) \in \Omega \times (0,T]$, $\hat{\eta}(\boldsymbol{x}, t)=0$, and thus $\hat{\theta}_1 = \hat{\theta}_2$ almost everywhere in $\Omega \times (0,T]$. Also, using \cref{eq:erru} in \Cref{thm:wconv},
\begin{align*}
\|\nabla(\hat{p}_1 - \hat{p}_2 \| \leq C_{\boldsymbol{w}} C(r) \, \| g \| \, \| \hat{\theta}_1 - \hat{\theta}_2 \|^{2/s}  \, \| \nabla (\hat{\theta}_1 - \hat{\theta}_2) \|^{(s-2)/s},
\end{align*}
which implies that $\|\nabla(\hat{p}_1 - \hat{p}_2)\|=0$. Since $p_j(\cdot,t)  \in H^1_\text{D}$, this means that $\hat{p}_1 = \hat{p}_2$ almost everywhere in  $\Omega \times (0,T]$. The proof is complete.
\qed
\end{proof}

%--------------------------------------------------------------------------------------------------
\section{A Numerical Implementation of the Data Assimilation}\label{sec:Numerical_Implementation}
%--------------------------------------------------------------------------------------------------
\setcounter{equation}{0}
\setcounter{figure}{0}

\subsection{Setting of the Computational Domain and the Assimilation Data}
Description of the numerical implementation is focused on performing the data assimilation methodology on a rectangular domain $\Omega = (0,L_1) \times (0,L_2)$ such as depicted on \cref{fig:disco}. In this $\Omega$, it is assumed that there is a set $\{ {\bfs x}_i \}_{i=0}^{N_{\text{sp}}} \subset \Omega$ of spatial measurement locations with a maximum distance $\hbar$, over which certain functional values of $\theta$ are known. In particular, we assume that for every ${\bfs x}_i$ there exists a bounded linear functional $\gamma_i : H^1_{\rm D}(\Omega) \to \mathbb{R}$. The set of these functional is the ground for constructing the interpolation operator $P_{\hbar}: H^1_{\rm D}(\Omega) \to \text{span} \{ \phi_{\hbar,i}: i=0,\cdots, N_{\text{sp}} \}$:
\begin{equation}
	P_{\hbar}(z) = \sum_{i=0}^{N_{\text{sp}}} \gamma_i(z) \phi_{\hbar,i},
\end{equation}
where $\phi_{\hbar,i} : \overline{\Omega} \to [0,1]$ is the usual nodal bilinear basis function over the partition of $\Omega$ caused by $\{ {\bfs x}_i \}_{i=0}^{N_{\text{sp}}}$, such that $\phi_{\hbar,i}(\boldsymbol{x}_j) = \delta_{ij}$. This kind of interpolation operator has been well studied and can be shown to satisfy \cref{eq:sparsebound} under sufficient regularity of the measured function and the measurement locations \cite{2008:Brenner}. An example of such a $\gamma_i$ is the average value of the function at ${\bfs x}_i$:
$$
\gamma_i(z) = \frac{1}{\omega_i} \int_{\omega_i} z(\boldsymbol{x}) \, {\rm d} {\bfs x},
$$
where $\omega_i \subset \Omega$ is a certain region associated with ${\bfs x}_i$.
\begin{figure}\label{fig:disc}
	\vspace*{-0.5cm}
	\subfloat[]{\label{fig:disco} \includegraphics[width=0.32\columnwidth]{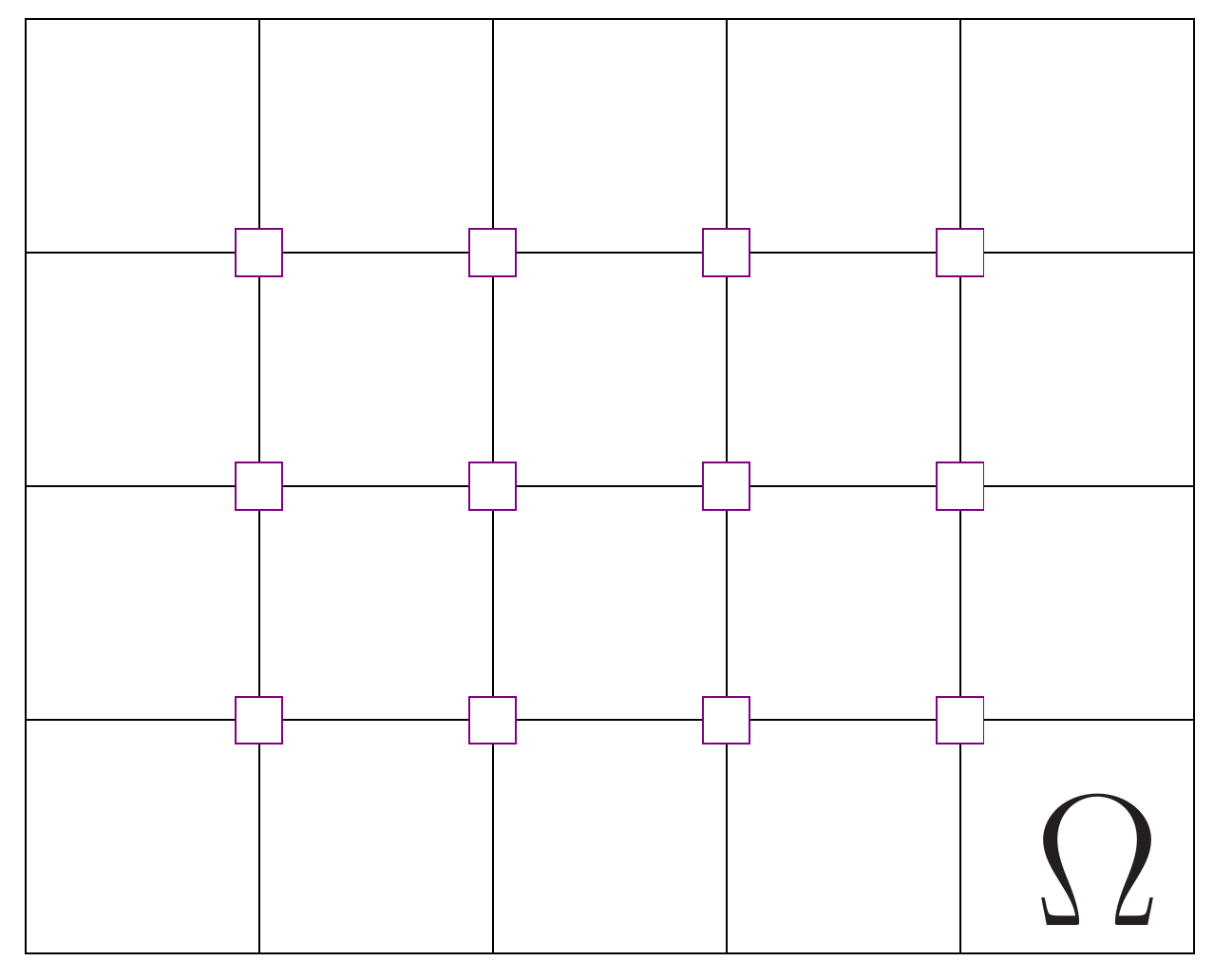}}
	\subfloat[]{\label{fig:disct} \includegraphics[width=0.32\columnwidth]{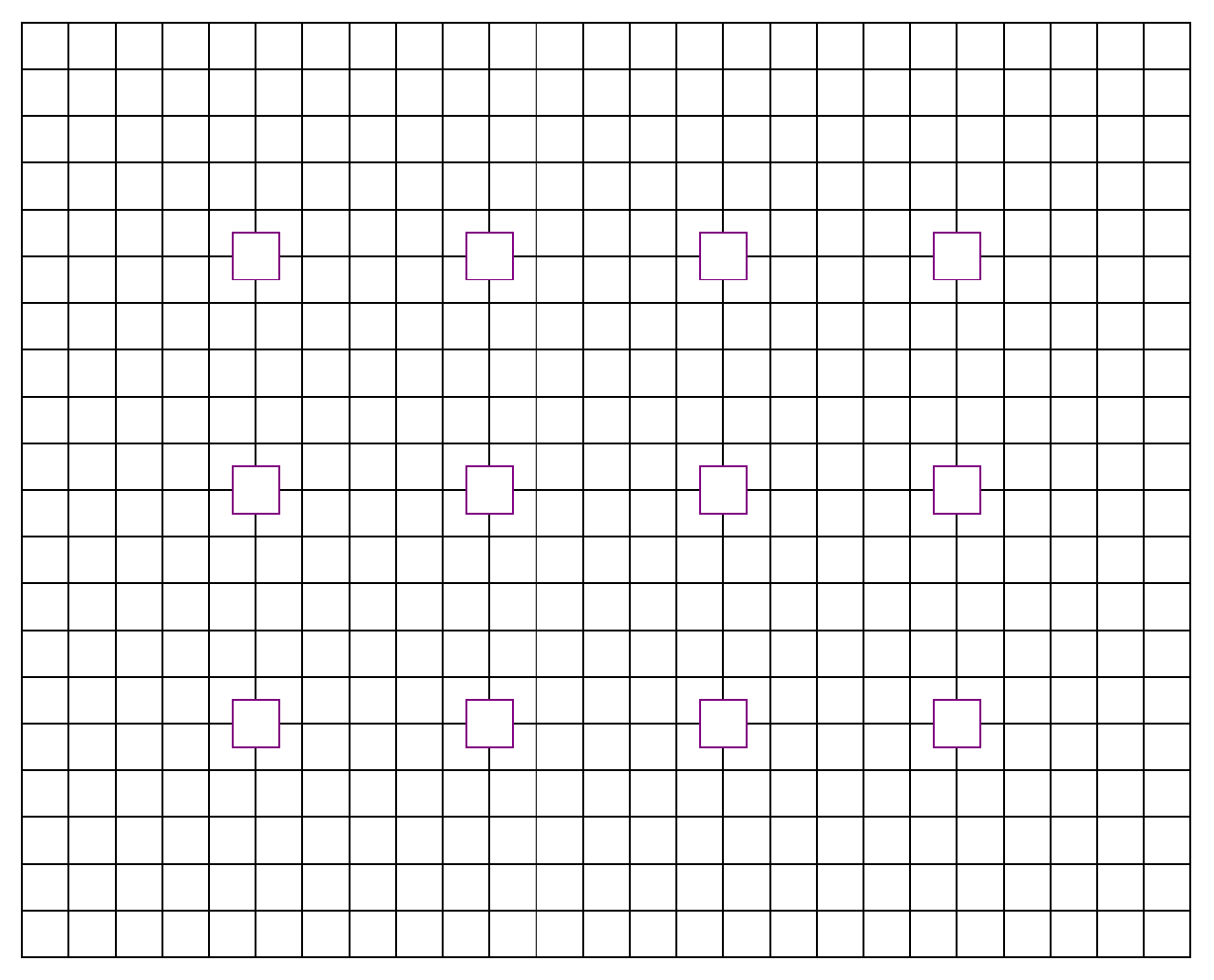}}
	\subfloat[]{\label{fig:discd} \includegraphics[width=0.32\columnwidth]{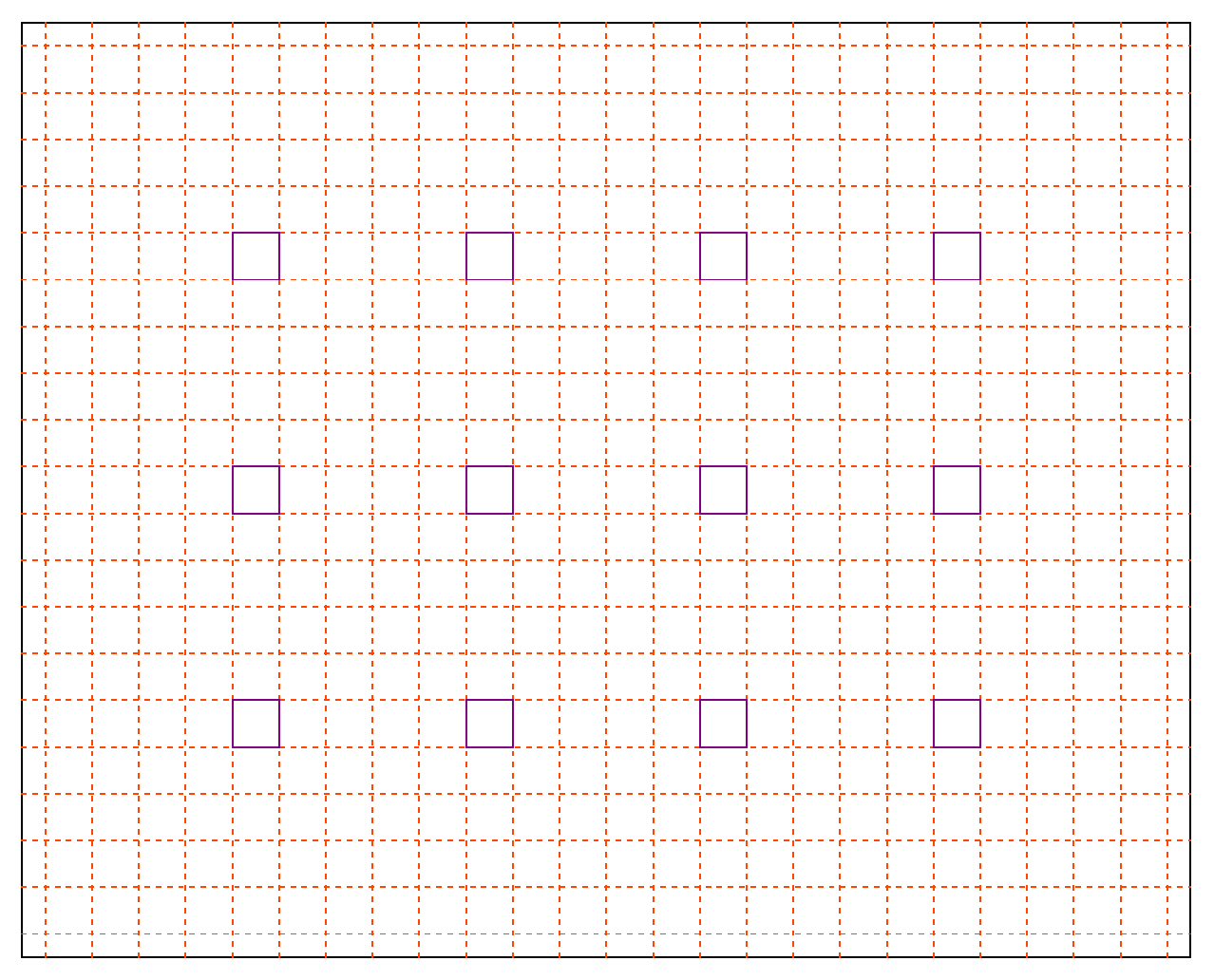}}
	\caption{An example rectangular domain $\Omega$ with sparse data measurement locations (left), the associated triangulation of the domain into rectangular element $\mathcal{T}_h$ (middle), and a corresponding dual mesh of control volumes $\mathcal{D}_h$ (right).}
\end{figure}

\subsection{Numerical Approximation Strategy}
The numerical approximation that is utilized for simulation of the data assimilation adopts a strategy described in \cite{dg18jomp}, which develops an algorithm for simulating a standard two-phase flow and transport model.
The domain $\Omega$ is discretized into a collection of nonoverlapping rectangles 
$\tau \in \mathcal{T}_h$ such that $\Omega = \bigcup_{\tau\in\mathcal{T}_h} \tau$ (see \cref{fig:disct}), where $h=\max_{\tau\in\mathcal{T}_h} h_\tau$ and $h_\tau$ is the diameter of $\tau$. Let $\mathcal{Z}_h$ be the set of vertices in $\Omega \setminus \Gamma_{\rm D}$ as a result of the partition $\mathcal{T}_h$.
On this $\mathcal{T}_h$, the continuous piecewise bilinear and the discontinuous piecewise bilinear  finite element spaces are respectively defined as
\begin{equation}\label{chpt2_eq:femspace}
	\begin{aligned}
		V_{h} &= \{ v_h\in C(\Omega) \cap H^1_\text{D}:  ~v_h |_{\tau} \text{ is bilinear } \forall \ \tau \in  \mathcal{T}_{h} \}
		= \text{span} \{ \phi_{\bfs \zeta}, ~ {\bfs \zeta} \in \mathcal{Z}_h \},\\
				V_{{\rm d},h} &= \{ v_h\in L^2(\Omega) :  ~v_h |_{\tau} \text{ is bilinear } \forall \ \tau \in  \mathcal{T}_{h} \} =
				\text{span} \{ \phi_{{\bfs \zeta},\tau}, ~{\bfs \zeta} \in \mathcal{Z}_\tau, \tau \in \mathcal{T}_h \},
	\end{aligned}
\end{equation}
where $\phi_{{\bfs \zeta}} : \overline{\Omega} \to [0,1]$ is the usual continuous piecewise linear Lagrangian finite element basis functions and  $\phi_{{\bfs \zeta},\tau} = \phi_{\bfs \zeta} {\bfs 1}_{\tau}$, with ${\bfs 1}_\tau : \overline{\Omega} \to [0,1]$ being the characteristic function of $\tau$. In the above, $\mathcal{Z}_\tau \subset \mathcal{Z}_h$ is the set of vertices in $\tau$. Associated with every vertex $\bfs \zeta \in \mathcal{Z}_h$, denote by $\omega_{\bfs \zeta}$ the control volume whose construction is according to the illustration in \cref{fig:mycv}. The set of such control volumes is denoted by $\mathcal{D}_h$ (see \cref{fig:discd}). 
For $v_h \in V_{{\rm d},h}$, we set $I_\tau v_h$ as a piecewise constant function over $\tau$, which is defined by
$$
I_\tau v_h = \displaystyle \sum_{{\bfs \zeta} \in \mathcal{Z}_\tau} v_h({\bfs \zeta}) {\bfs 1}_{{\bfs \zeta},\tau},
$$
where ${\bfs 1}_{{\bfs \zeta},\tau} : \overline{\Omega} \to [0,1]$ is the characteristic function of the polygonal $\omega_{\bfs \zeta} \cap \tau$ (see Figure~\ref{fig:mycv}).
\begin{figure}
	\centering
	\includegraphics[scale=0.5]{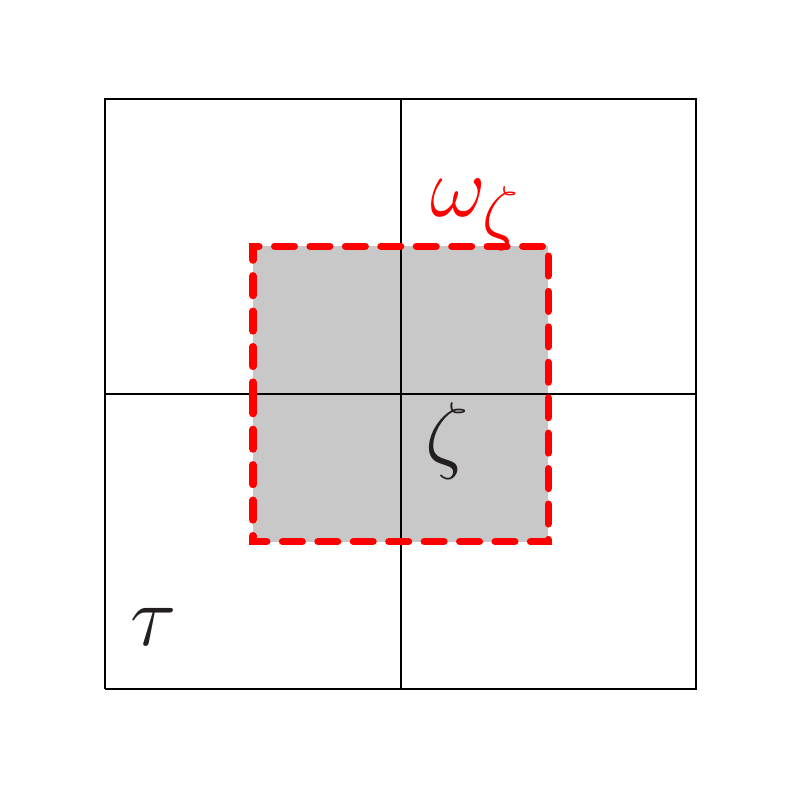}
	\caption{Construction of a control volume $\omega_{\bfs \zeta}$ (the shaded rectangle) for a vertex $\bfs \zeta$ that is shared by four rectangular elements $\tau$. The shaded intersection of a $\tau$ and $\omega_{\bfs \zeta}$ is denoted by $\omega_{\bfs \zeta} \cap \tau$.}
	\label{fig:mycv}
\end{figure}
The semidiscrete approximation of the data assimilation is  to find $(\hat{\theta}_h(t), \hat{p}_h(t), \hat{\Psi}_{{\rm d},h}(t)) \in V_h \times V_h \times V_{{\rm d}, h}$ that is governed by

\begin{subnumcases}{\label{eq:SDDA}}
	\displaystyle \frac{\rm d}{{\rm d} t} \int_{\omega_{\bfs \zeta}}  \hat{\theta}_h  \, \mathrm{d} \boldsymbol{x} - \int_{\partial \omega_{\bfs \zeta}} \left( D \nabla \hat{\theta}_h + \hat{\theta}_{h}  \kappa(\hat{\theta}_h) \nabla \hat{\Psi}_{{\rm d},h}   \right) \cdot \boldsymbol{n} \, \mathrm{d} \ell
	+ \int_{\omega_{\bfs \zeta}} q \hat{\theta}_h  \, \mathrm{d} \boldsymbol{x} \nonumber \\
	\displaystyle \hspace*{0.8cm}= \int_{\omega_{\bfs \zeta}} \left(f - \mu  P_{\hbar}(\hat{\theta}_h -\theta) \right) \, \mathrm{d} \boldsymbol{x} ~ \forall \omega_{\bfs \zeta} \in \mathcal{D}_h, \label{eq:SDDA1}\\
	\displaystyle B(\hat{p}_h,\varphi_h;\hat{\theta}_h) = \langle g, \varphi_h \rangle ~\forall \varphi_h \in V_h, \label{eq:SDDA2}\\
	\displaystyle B_h(\hat{\Psi}_{{\rm d},h},\varphi_{{\rm d},h};\hat{\theta}_h) = R_h(\hat{p}_h, \varphi_{{\rm d},h} ; \hat{\theta}_h) ~\forall \varphi_{{\rm d},h} \in V_{{\rm d},h}, \label{eq:SDDA3}
\end{subnumcases}
where
\begin{align*}
	B_h(v, w ; z) &= \sum_{\tau \in \mathcal{T}_h}  \sum_{{\bfs \zeta} \in \mathcal{Z}_\tau} -\big \langle \kappa(z)  \nabla v  \cdot \boldsymbol{n},  I_\tau w \big \rangle_{\partial \omega_{\bfs \zeta} \cap \tau}, \\
	R_h(v, w ; z) &= \sum_{\tau \in \mathcal{T}_h}\big \langle \{\kappa(z) \nabla v \} \cdot \boldsymbol{n}, I_\tau w - w\big \rangle_{\partial \tau} + \langle g, I_\tau w - w \rangle_\tau + \langle \kappa(z) \nabla v, \nabla w \rangle_\tau,
\end{align*}
and $\boldsymbol{n}$ is the usual outward unit normal vector. In the forms above, for $e = \partial \tau \cap \partial \tau' \subset \partial \tau$, $\{ \boldsymbol{w} \}$ is understood as $(\boldsymbol{w} |_\tau + \boldsymbol{w} |_{\tau'})/2$ on $e$. The formulation \eqref{eq:SDDA} assumes a zero initial condition. 

A closer inspection of the system \eqref{eq:SDDA} shows that it digresses from the original data assimilation model \cref{eq:davarform} in two ways. First, the equation to approximate $\hat{\theta}$, i.e., \eqref{eq:SDDA1} utilizes a finite volume element method, whose variational formulation is different from \eqref{eq:davarform1}. This choice is made due to the method's suitability to approximate solutions to problems that are derived from conservation principle. The second equation \eqref{eq:SDDA2} is exactly the same as in \eqref{eq:davarform2}, except the former is posed on the finite dimensional space $V_h$.

The third equation \eqref{eq:SDDA3} requires a proper description. Due to its global formulation, cf. \eqref{eq:SDDA3}, $\hat{p}_h(t)$ does not satisfy the mass balance property, i.e.,
\begin{equation}
	\int_{\partial \omega} -\kappa(\hat{\theta}_h) \nabla \hat{p}_h \cdot {\bfs n} \, {\rm d} \ell - \int_\omega g \, {\rm d} x \ne 0, ~\forall \, \omega \in \Omega. 
\end{equation}
In many numerical simulations of problems that are derived from conservation principle, it is crucial for the approximations to possess such a mass balance property. The subsystem \eqref{eq:SDDA3} is meant to produce a surrogate of $\hat{p}_h(t) \in V_h$, which is denoted by $\hat{\Psi}_{{\rm d}, h}(t) \in V_{{\rm d},h}$ that satisfies
\begin{equation} \label{eq:lcku}
	\int_{\partial \omega_{{\bfs \zeta}}} -\kappa(\hat{\theta}_h) \nabla \hat{\Psi}_{{\rm d},h}(t) \cdot {\bfs n} \, {\rm d} \ell = \int_{\omega_{{\bfs \zeta}}} g \, {\rm d} x, ~\forall \, \omega_{{\bfs \zeta}} \in \mathcal{D}_h. 
\end{equation}
Thus, \eqref{eq:SDDA3} is a step to produce a locally conservative normal flux $-\kappa (\hat{\theta}_h) \nabla \hat{\Psi}_{{\rm d},h}(t) \cdot {\bfs n}$ in the sense of \cref{eq:lcku}. We refer to \cite{dg18jomp} for an extensive discussion on how this property is established.

Obviously the system in \eqref{eq:SDDA} is coupled and nonlinear so a standard implementation will result in a set of nonlinear algebraic equations governing the following functions:
\begin{equation}
	\begin{aligned}
	\hat{\theta}_h(t) = \sum_{{\bfs \zeta}\in \mathcal{Z}_h} \alpha_{\bfs \zeta}(t) \phi_{\bfs \zeta},~~
    \hat{p}_h(t) = \sum_{{\bfs \zeta} \in \mathcal{Z}_h} \beta_{\bfs \zeta}(t) \phi_{\bfs \zeta},~~\hat{\Psi}_{{\rm d},h}(t) = \sum_{\tau \in \mathcal{T}_h} \sum_{{\bfs \zeta} \in \mathcal{Z}_\tau} \delta_{{\bfs \zeta}, \tau}(t) \phi_{{\bfs \zeta},\tau}.
	\end{aligned}
\end{equation}
The nonlinear algebraic system must be solved by some iterative techniques at every time level. Formal discussions of the construction and techniques for efficiently solving such an algebraic system can be found in many texts (see for example \cite{chen:2006,dg18jomp}). However, there exists a fast-slow relation between the pressure and concentration profiles \cite{aziz:1979}. Taking advantage of this relation can improve the efficiency of the numerical method for solving the above system. This improved efficiency is realized by calculating $\hat{p}_h$ and $\hat{\Psi}_{{\rm d},h}$ on a coarse time partition of $(0,T]$, denoted by $\mathcal{I} = \{t_0, t_1, \cdots, t_M\}$, and computing $\hat{\theta}_h$ at a set of finer time levels $\mathcal{J}_n = \{s_{0,n}, s_{1,n} \cdots, s_{m,n} \}$ that further partitions $(t_{n-1}, t_n]$ for every $n=1,2,\cdots,M$. In this setting, $s_{0,n} = t_{n-1}$ and $s_{m,n} = t_{n}$. Moreover, this separation of time scales allows for a decoupling of the pressure and its surrogate (second and third equations in \eqref{eq:SDDA}) from the concentration (first equation in \eqref{eq:SDDA}). In fact, the second and third equation in \eqref{eq:SDDA}) is also completely decoupled and $\hat{\Psi}_{{\rm d},h}(t_n)$ can be independently solved for every $\tau \in \mathcal{T}_h$ such that a locally conservative normal flux $-\kappa (\hat{\theta}_h(t_{n-1})) \nabla \hat{\Psi}_{{\rm d},h}(t_n) \cdot {\bfs n}$ in the sense of \cref{eq:lcku} is made available. This quantity in turn is fed into equation of the approximate concentration.
In this way, the utilization of iterative techniques to solve \cref{eq:SDDA} can be completely avoided. The complete picture of the time marching on $\mathcal{J}_n$ is listed in Algorithm~\ref{alg:timemarch}. A trapezoidal rule is used to approximate the time integration in that algorithm along with an application of upwinding scheme for the term $\hat{\theta}_h \kappa (\hat{\theta}_h(t_{n-1})) \nabla \hat{\Psi}_{{\rm d},h}(t_n) \cdot \boldsymbol{n}$. This algorithm is then called for every $n=1,2,\cdots,M$.
\begin{algorithm}  
\caption{Time Integration for Data Assimilation on $\mathcal{J}_n$}
\label{alg:timemarch}
\begin{algorithmic}
\State Let $\hat{\theta}_h(t_{n-1}), \hat{p}_h(t_{n-1})$ be available and set $\hat{\theta}_h(s_{0,n}) = \hat{\theta}_h(t_{n-1})$.
\State Set $\kappa_{n-1} = \kappa(\hat{\theta}_h(t_{n-1}))$ and find $\hat{p}_h(t_n) \in V_h$ satisfying
\begin{equation*}
\displaystyle B(\hat{p}_h(t_n),\varphi_h;\hat{\theta}_h(t_{n-1})) = \langle g, \varphi_h \rangle ~\forall \varphi_h \in V_h.
\end{equation*}
\State Find $\hat{\Psi}_{{\rm d},h}(t_n) \in V_{{\rm d},h}$ satisfying
\begin{equation*}
		\displaystyle B_h(\hat{\Psi}_{{\rm d},h}(t_n),\varphi_{{\rm d},h};\hat{\theta}_h(t_{n-1})) = R_h(\hat{p}_h(t_n), \varphi_{{\rm d},h} ; \hat{\theta}_h(t_{n-1})) ~\forall \varphi_{{\rm d},h} \in V_{{\rm d},h},
\end{equation*}
where $\hat{\Psi}_{{\rm d},h}(t_n)$ satisfies the mass balance
\begin{equation*}
	\int_{\partial \omega_{{\bfs \zeta}}} \kappa_{n-1} \nabla \hat{\Psi}_{{\rm d},h}(t_n) \cdot {\bfs n} \, {\rm d} \ell = \int_{\omega_{{\bfs \zeta}}} g \, {\rm d} x, ~\forall \, \omega_{{\bfs \zeta}} \in \mathcal{D}_h. 
\end{equation*}
\For{$j = 1, 2, \cdots, m$}
\smallskip
\State Find $\hat{\theta}_h(s_{j,n}) \in V_h$ satisfying
\begin{equation*}
\begin{aligned}
\displaystyle 
\int_{\omega_{{\bfs \zeta}}}  (\hat{\theta}_h(s_{j,n}) - \hat{\theta}_h(s_{j-1,n}))  \, \mathrm{d} \boldsymbol{x} - \int_{s_{j-1,n}}^{s_{j,n}} \int_{\partial \omega_{{\bfs \zeta}}} \left( D \nabla \hat{\theta}_h + \hat{\theta}_h \kappa_{n-1} \nabla \hat{\Psi}_{{\rm d},h}(t_n) \right) \cdot \boldsymbol{n} \, \mathrm{d} \ell  {\rm d} t\\
\displaystyle \hspace*{0.8cm} + \int_{s_{j-1,n}}^{s_{j,n}} \int_{\omega_{{\bfs \zeta}}} q \hat{\theta}_h  \, \mathrm{d} \boldsymbol{x} {\rm d} t
= \int_{s_{j-1,n}}^{s_{j,n}} \int_{\omega_{{\bfs \zeta}}} \left(f - \mu  P_{\hbar}(\hat{\theta}_h -\theta) \right) \, \mathrm{d} \boldsymbol{x} {\rm d} t ~ \forall \omega_{{\bfs \zeta}} \in \mathcal{D}_h.
\end{aligned}
\end{equation*}
\EndFor
\State Set $\hat{\theta}_h(t_n) = \hat{\theta}_h(s_{m,n})$.  
\end{algorithmic}
\end{algorithm}

The convergence analysis of the proposed algorithm \eqref{eq:SDDA1}-\eqref{eq:SDDA3} is based upon obtaining a priori estimates on the approximation $(\hat{\theta}_h, \hat{p}_h,\hat{\Psi}_{{\rm d},h})$. Similar estimates are obtained  
in Section \ref{sec:Data_Assimilation_Algorithm} for the spectral Galerkin approximation. The actual convergence analysis of the above numerical scheme is far from trivial and is outside the scope of the current investigation. We will disseminate this analysis in a subsequent future work. We refer to \cite{AM-OS} where a different algorithm was used, more specifically a combination of mixed finite element and finite volume methods was implemented and a convergence analysis was proved.

%--------------------------------------------------------------------------
\section{Numerical Validation Study}\label{sec:Examples}
%--------------------------------------------------------------------------
\setcounter{equation}{0}
\setcounter{figure}{0}
The following four examples serve as a numerical validation of the proposed methodology. In the first two examples the relation between the choice of the relaxation parameter, $\mu$, and the corresponding convergence rates of the proposed method are explored by simulating synthetic model problems. In the third example the convergence rates obtained for various choices of the sparse data interpolation length scale, $\hbar$, are explored when an analytic solution to the model problem is entirely unknown. Estimates for the quality of the solutions obtained using the data assimilation algorithm are obtained by comparing to a numerical simulation of the model problem where the initial condition is known. The last example explores the applicability of the methodology for a saltwater intrusion model. Parameter values for the simulation are chosen to respect the physical setting under consideration. Since the previous analysis presented in this paper allows for the choice of $\hat{\theta}_0$ to be made arbitrarily, two types of initial guess are considered. In Examples 1 and 2 the initial guess $\hat{\theta}_0 = 0$ is used. In Examples 3 and 4, a bilinear interpolation of the sparse data $P_\hbar(\theta_0)$ is used as the initial condition for $\hat{\theta}_h$.

\subsection{Example 1}\label{sec:example1}
This example illustrates the validity of \Cref{thm:convergence}. Chiefly, we attempt to showcase the predicted asymptotically stable long run behavior of the error estimates for the assimilated solution. The setting is as follows: $\Omega = [0,1]^2$, $\Gamma_{\text{D}} = \partial \Omega$, $D = 1$, $q = 0$, $-\kappa(\theta) \nabla p = (v(\theta), v(\theta))$, where $v(\theta) = (1+\theta)^{-1}$, $\theta_0(\boldsymbol{x}) = (x_1-{x_1}^2)(x_2-{x_2}^2)$. The function $f$ is chosen such that ${\theta(\boldsymbol{x},t) = (x_1-{x_1}^2)(x_2-{x_2}^2)e^{-t}}$.
 Notice that this example does not require solving for $p$ since the Darcy velocity is directly expressed as a function of $\theta$. The function $\hat{\theta}_h$ is obtained based on the discretization of $\Omega$ into $100 \times 100$ rectangular elements ($h = 0.01$). The interpolation function $P_\hbar$ was constructed using a fixed sparse grid of $10 \times 10$ rectangular elements ($\hbar = 0.1$). A uniform fine time scale of $s_{i+1,n}-s_{i,n} = 0.002$ was chosen with updates to the Darcy velocity occurring at every $10^{\mathrm{th}}$ fine time step. As a metric, the relative difference 
\begin{equation}\label{eq:relativefunc}
 \text{R}(\hat{\theta}_h; t) = 100 \times \frac{\| \hat{\theta}_h(\cdot,t) - \theta(\cdot,t)\|_{2}}{\| \theta(\cdot,t)\|_{2}} \, \%
 \end{equation}
 is calculated at each coarse time step. Plots of values of $\text{R}(\hat{\theta}_h; t)$ for various choices of $\mu$ are shown in \cref{fig:mu1,fig:mu3}. It is worth noting that $\text{R}(\hat{\theta}_h ; 0) = 100 \%$ since the initial condition $\hat{\theta}_0 = 0$ was used in the simulation. 

\begin{figure}[H]\label{figures}
\subfloat[]{\label{fig:mu1} \includegraphics[width=0.32\columnwidth]{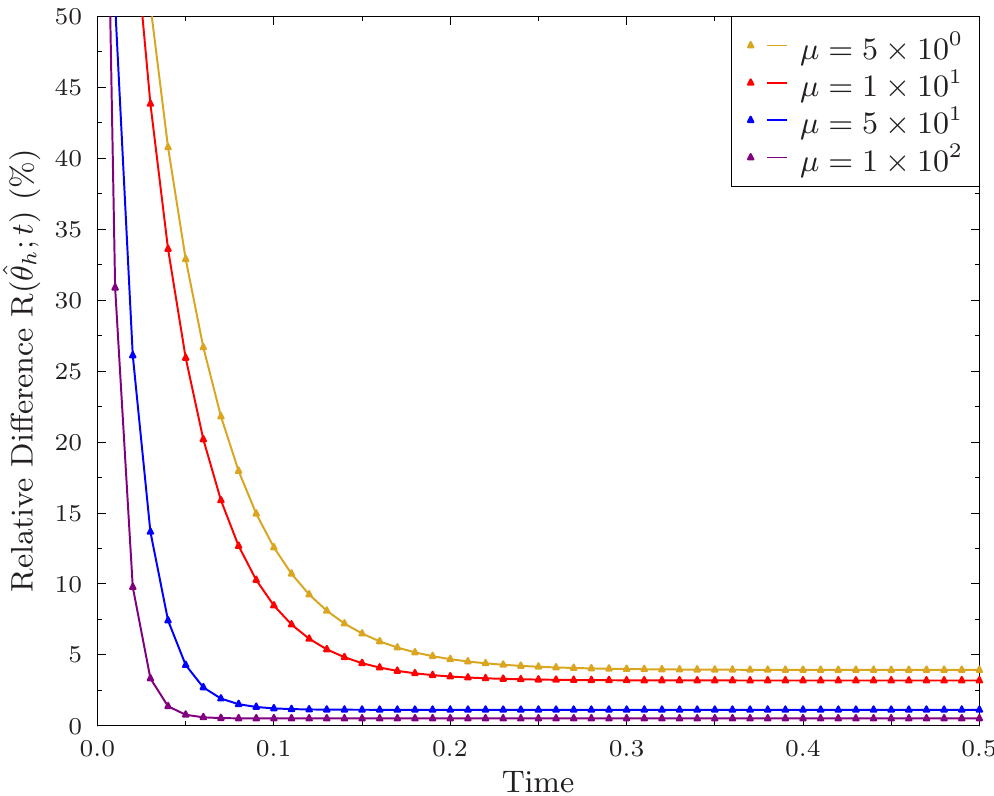}}
%\subfloat[]{\label{fig:mu2} \includegraphics[width=0.31\columnwidth]{figures/example1/error_2-eps-converted-to.pdf}}
\subfloat[]{\label{fig:mu3} \includegraphics[width=0.32\columnwidth]{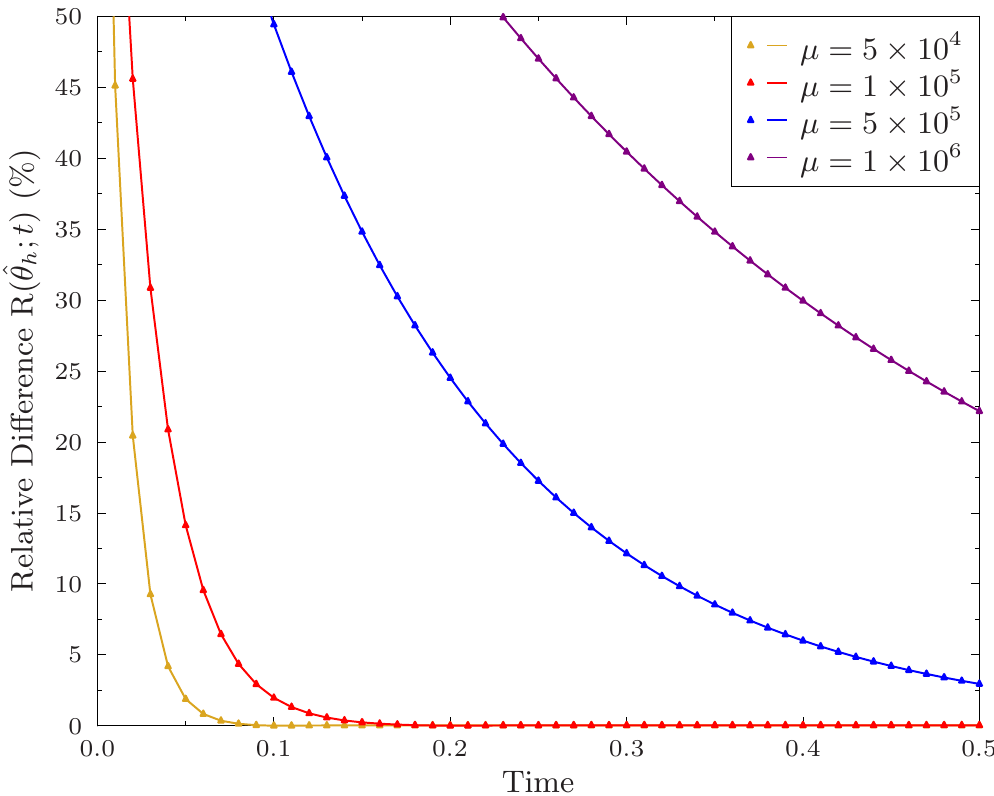}}
\subfloat[]{\label{fig:single} \includegraphics[width=0.32\columnwidth]{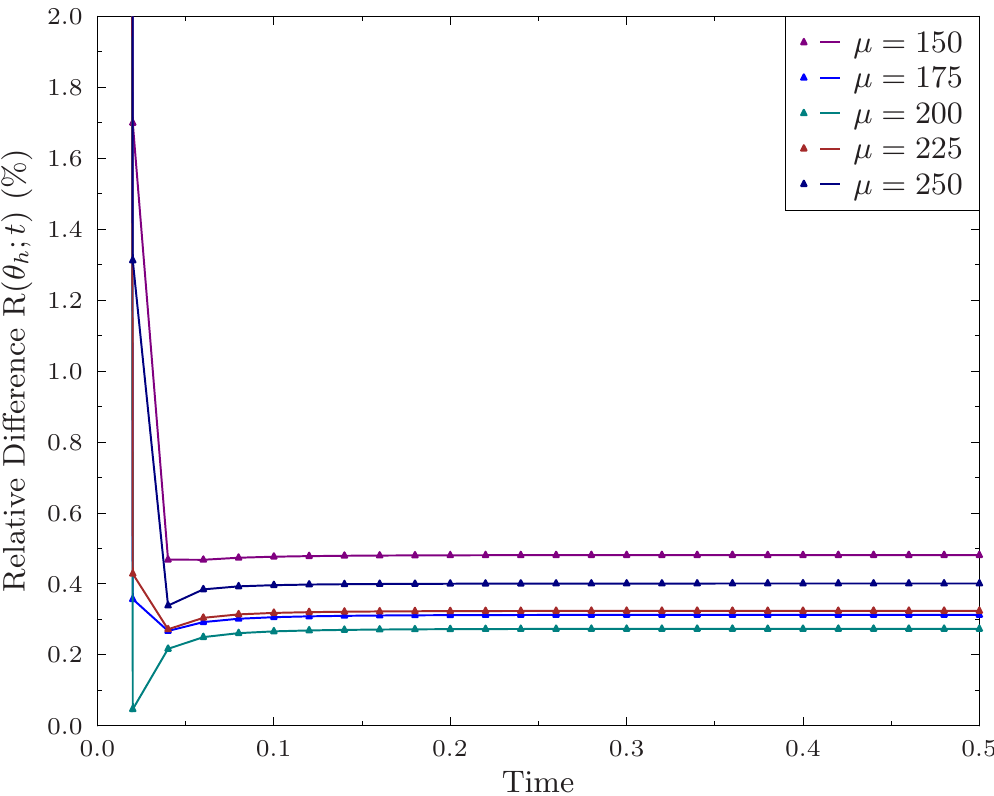}}
\caption{Relative Difference $\text{R}(\hat{\theta}_h;t)$ for various $\mu$: Plots (a) and (b) are for Example 1 with $\hbar = 10 h$, plot (c) is for Example 2 with $\hbar = 5 h$.}
\end{figure}

\begin{figure}[t]
\centering
\subfloat[]{\label{fig:force} \includegraphics[width=0.32\columnwidth]{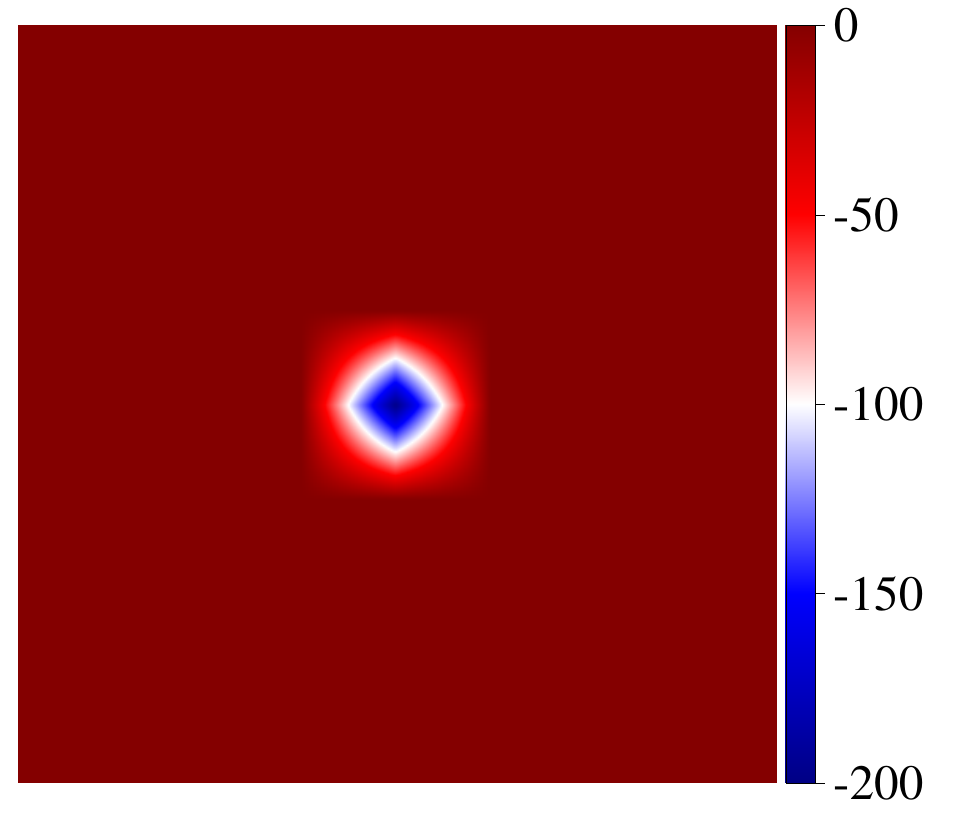}}
\subfloat[]{\label{fig:perm} \includegraphics[width=0.32\columnwidth]{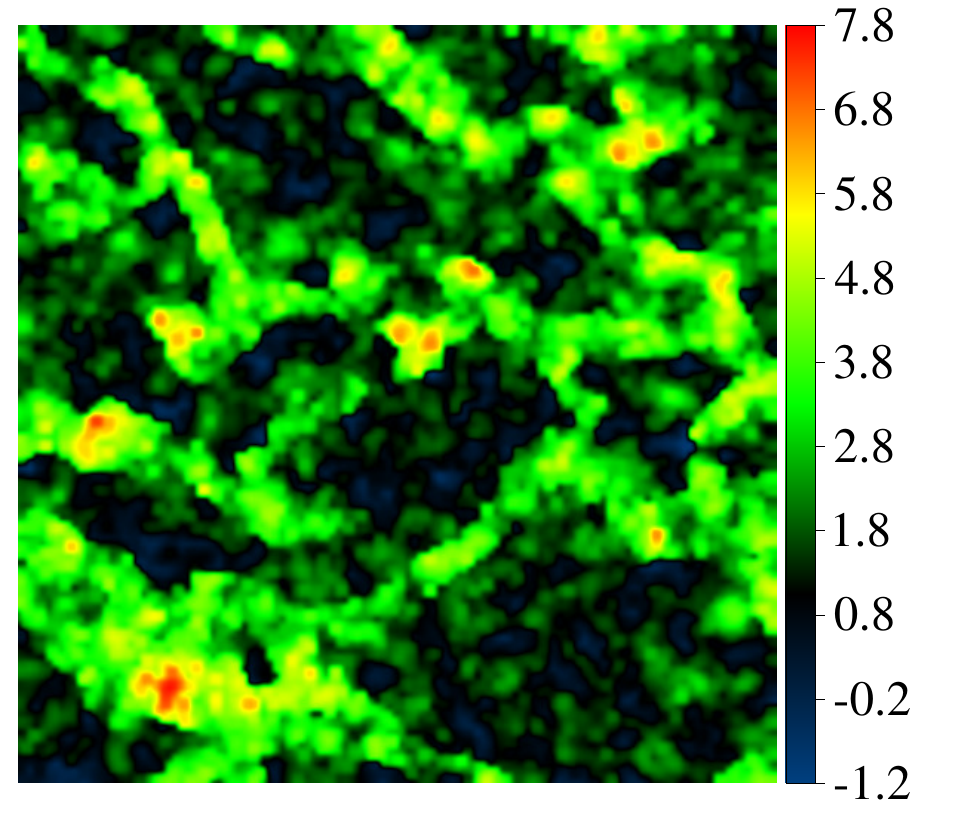}}
\subfloat[]{\label{fig:initcon} \includegraphics[width=0.32\columnwidth]{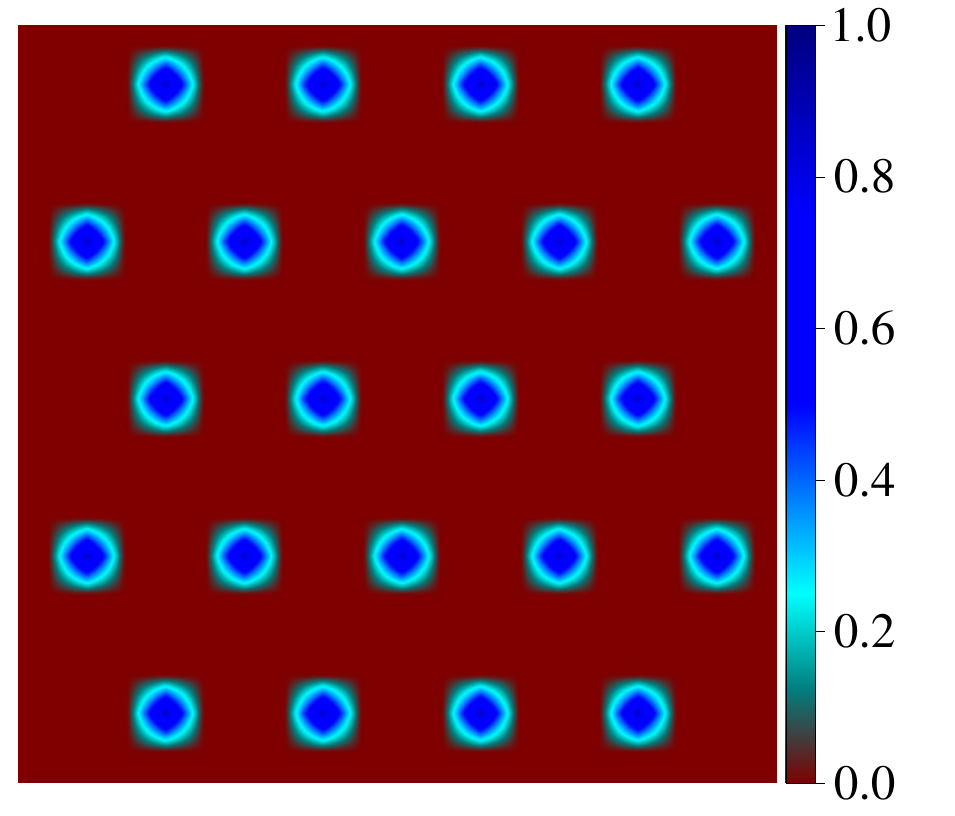}}
\caption{Input data in Example 3 (\Cref{sec:example3}): (a) $g(\boldsymbol{x})$,  (b) $\ln(k(\boldsymbol{x}))$, (c) $\theta_0(\boldsymbol{x})$}
\end{figure}

As a consequence of \Cref{thm:convergence} and when $h$ is sufficiently resolved, an asymptotic bound on $\|\hat{\theta}_h - \theta\|$ can be obtained if conditions on $\mu$ are satisfied. Specifically, combining requirement \cref{eq:muhbarbound} and the condition that $\xi$, defined in \Cref{thm:convergence}, be negative one finds that this asymptotic behavior is obtained if
\begin{equation} \label{eq:perfmu}
2\left(\tilde{C}_{\boldsymbol{w}}+\tilde{C}_{\text{gn}} \right) \,  \| g \|^s - q_*  <  \mu < \frac{D_*}{c_0^2 \hbar}.
\end{equation}
That is, there exists a so-called Goldilocks Zone for choosing the value of $\mu$ to guarantee the desired convergence. The precise value of each of these bounds are unavailable due to lack of information about  the constants of the a priori estimates.   However, the convergence results obtained in \cref{fig:mu1,fig:mu3} are consistent with \Cref{thm:convergence}. Namely, the two figures indicate that there is a specific range of values for $\mu$ for which faster convergence of $\hat{\theta}_h$ to $\theta$ can be achieved. When $\mu$ is chosen within this range, an exponential rate of decrease towards an asymptoticly stable value is observed.  These results also suggest the existence of an optimal value of $\mu$. If the value of $\mu$ is increased or decreased away from this optimal value, the numerical results indicate an increase in the asymptote of the error.

\subsection{Example 2}\label{sec:example2}
This example illustrates the assumption a case of decoupling between $p$ and $\theta$, namely that $\kappa$ is independent of $\theta$. The setting is as follows: $\Omega = [0,1]^2$, $\Gamma_{\text{D}} = (0,x_2) \cup (1,x_2)$, $\Gamma_{\text{N}} = \partial \Omega \backslash \Gamma_{\text{D}}$,  $\kappa = D = 1$, $g = q = 0$, $\theta_0(\boldsymbol{x}) = (x_1-{x_1}^2)$. The function $f$ is chosen such that
 $(p, \theta) = (1-x_1, (x_1 - x_1^2) e^t)$ (i.e., $p(0,x_2) = 1$). The function $\hat{\theta}_h$ is obtained based on the discretization of $\Omega$ into $50 \times 50$ rectangular elements ($h = 0.02$).  The interpolation function $P_\hbar$ was constructed using a fixed sparse grid of $10 \times 10$ rectangular elements ($\hbar = 0.1$).
 A fine time scale of $s_{i+1,m}-s_{i,m} = 0.02$ while the Darcy's velocity is calculated only once before the time marching due the decoupling. Notice that in this decoupling scenario, $\hat{p} = p$.

Plots of $\text{R}(\hat{\theta}_h;t)$ for this example are shown in \cref{fig:single}. Note that for this case Notably, all choices of $\mu$ lead to the convergence of $\hat{\theta}_h$  to $\theta$ in only a few time steps. Furthermore, the resulting asymptotic behavior of the solution can be attributed to the discretization errors of the approximation. Specifically, using the same numerical scheme used to compute $\hat{\theta}_h$ and the known initial condition, an approximation of the true solution was found to have a asymptotic $\text{R}(\theta_h,t)$ value of approximately $0.89\%$. Noting that $\text{R}(\hat{\theta}_h,t)$ converges to an asymptotic value which is less than $0.89\%$ one can conclude that the theoretical convergence of the assimilation procedure has been numerically validated. 

\subsection{Example 3}\label{sec:example3}
The condition \cref{eq:perfmu} suggests an interplay between the choice of $\mu$ and $\hbar$ and the asymptotic behavior of the quality of $\hat{\theta}$. That is, the range of optimal choices for $\mu$ can be adjusted by varying the choice of $\hbar$. We investigate this dependency in the following example by fixing the value of $\mu$ and simulating an example problem using various choices of $\hbar$.

This example is posed in $\Omega = [0,1]^2$ with $\Gamma_{\text{D}} = \partial \Omega$, $D = 0.01$, and $q = f = 0$.
The spatial profile of $g$ is depicted in \cref{fig:force}, and $\kappa(\theta) = k(\boldsymbol{x}) (1-\theta+\theta/16)^{-4}$ where $k(\boldsymbol{x})$ is the heterogeneous profile depicted in log scale in \cref{fig:perm}. The structure of the initial condition $\theta_0$ is shown in \cref{fig:initcon}. The domain $\Omega$ is discretized into $240 \times 240$ rectangular elements. The results are obtained by using $s_{i+1,n}-s_{i,n} = 0.0004$, with the Darcy velocity being updated at a coarse time scale $t_{n+1} - t_n = 0.002$. Sparse data is assumed to be available at each coarse time step, with a bilinear interpolation being performed to update the assimilated concentration at the finest time scale. Since a closed form of the analytic true solution is not available, a numerical solution is used as a reference. That is, we use $\theta = \theta_h$ to obtain estimates of $\text{R}(\hat{\theta}_h,t)$ defined by \cref{eq:relativefunc}. A comparison of the solution obtained with the data assimilation algorithm to the quality of the sparse data used to run the algorithm is obtained using the metric
\begin{equation}
\widetilde{R}(\hat{\theta}_h; t) = 100 \times \frac{\| \hat{\theta}_h(\cdot,t) - P_\hbar\left(\theta_h(\cdot,t)\right)\|_{2}}{\| \theta_h(\cdot,t)\|_{2}} \, \%
\end{equation} 

\begin{figure}[t]
\centering
\begin{minipage}[c]{0.325\columnwidth}
\centering
\subfloat[]{\label{fig:e2rerror}\includegraphics[width=\columnwidth]{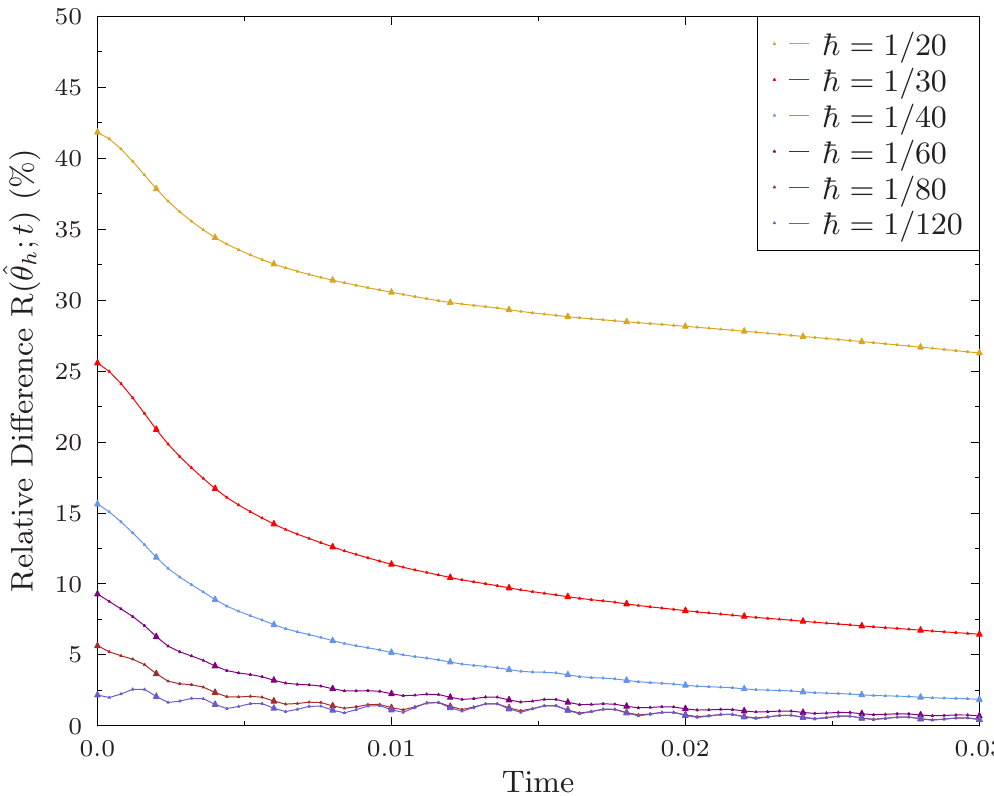}}
\end{minipage}
\begin{minipage}[c]{0.325\columnwidth}
\centering
\subfloat[]{\label{fig:e2ierror}\includegraphics[width=\columnwidth]{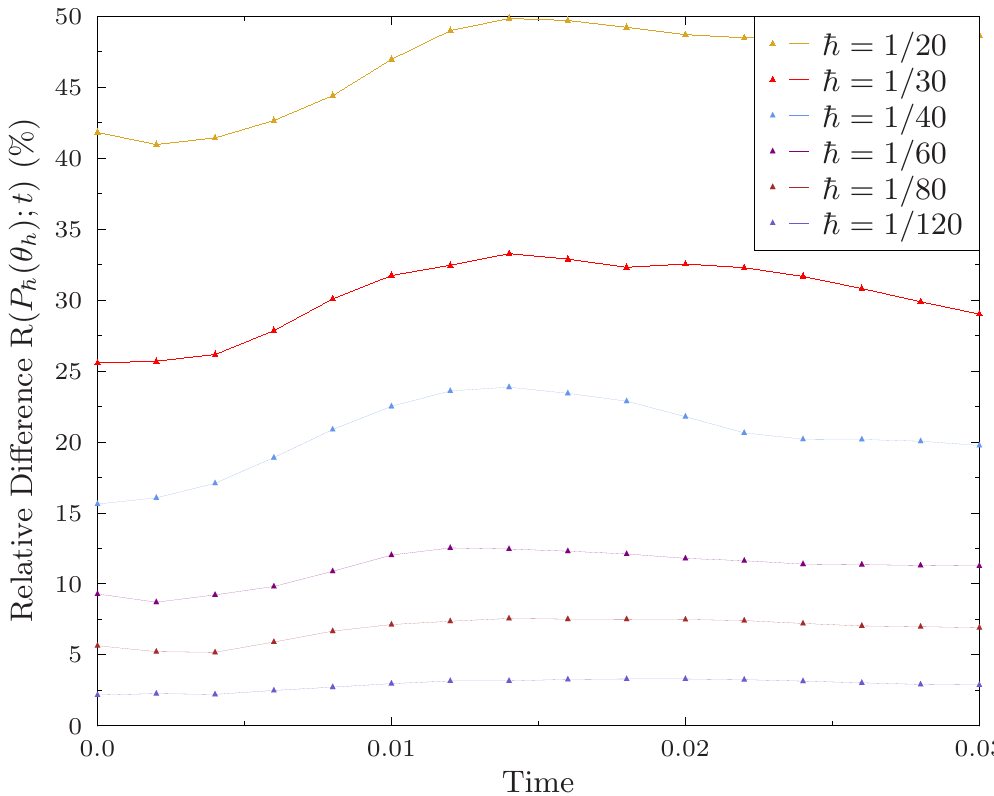}}
\end{minipage}
\begin{minipage}[c]{0.325\columnwidth}
\centering
\subfloat[]{\label{fig:e2ferror}\includegraphics[width=\columnwidth]{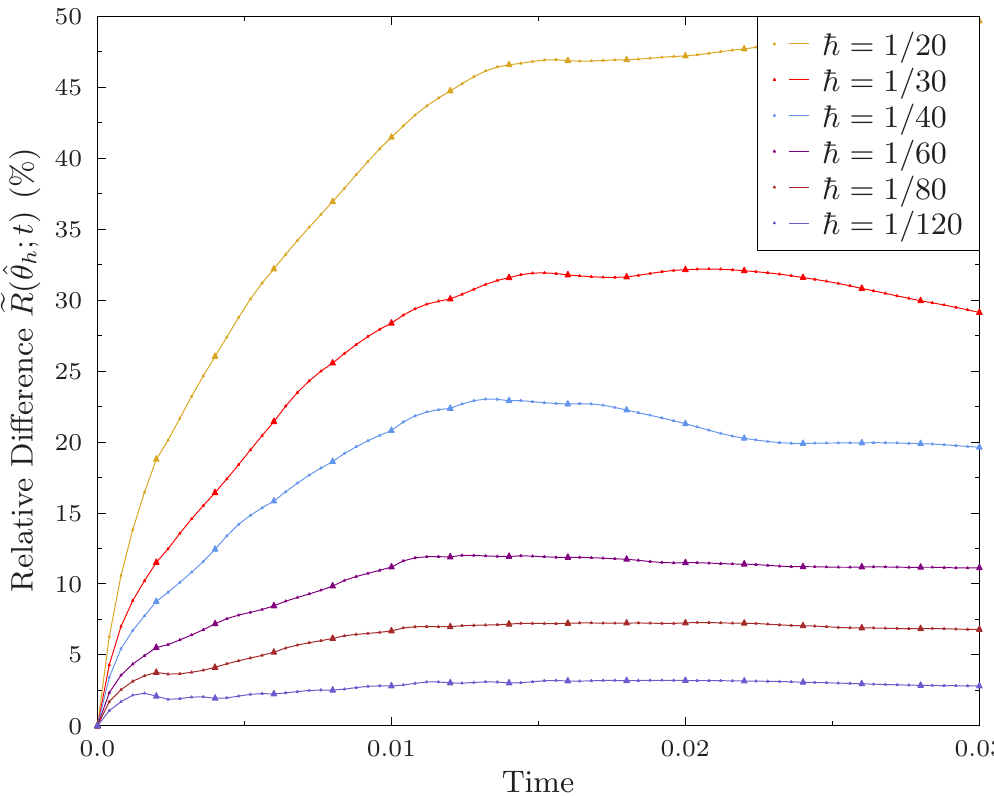}}
\end{minipage}
\caption{Example 3: \cref{fig:e2rerror} shows $\text{R}(\hat{\theta}_h; t)$, the relative difference of the solution obtained using the data assimilation algorithm for various choices of $\hbar$; \cref{fig:e2ierror} shows $\text{R}(P_\hbar(\theta_h); t)$, the quality of a bilinear interpolation of the sparse sample data, at coarse time steps for various choices of $\hbar$; \cref{fig:e2ferror} shows the behavior of relative difference over time, $\tilde{\text{R}}(\hat{\theta}_h; t)$, of the solution obtained using the data assimilation algorithm compared to a bilinear interpolation of the sparse data measurements for various choices of $\hbar$.}
\label{fig:example2}
\end{figure}

\begin{figure}[!t]
\centering
\includegraphics[width=1.0\columnwidth]{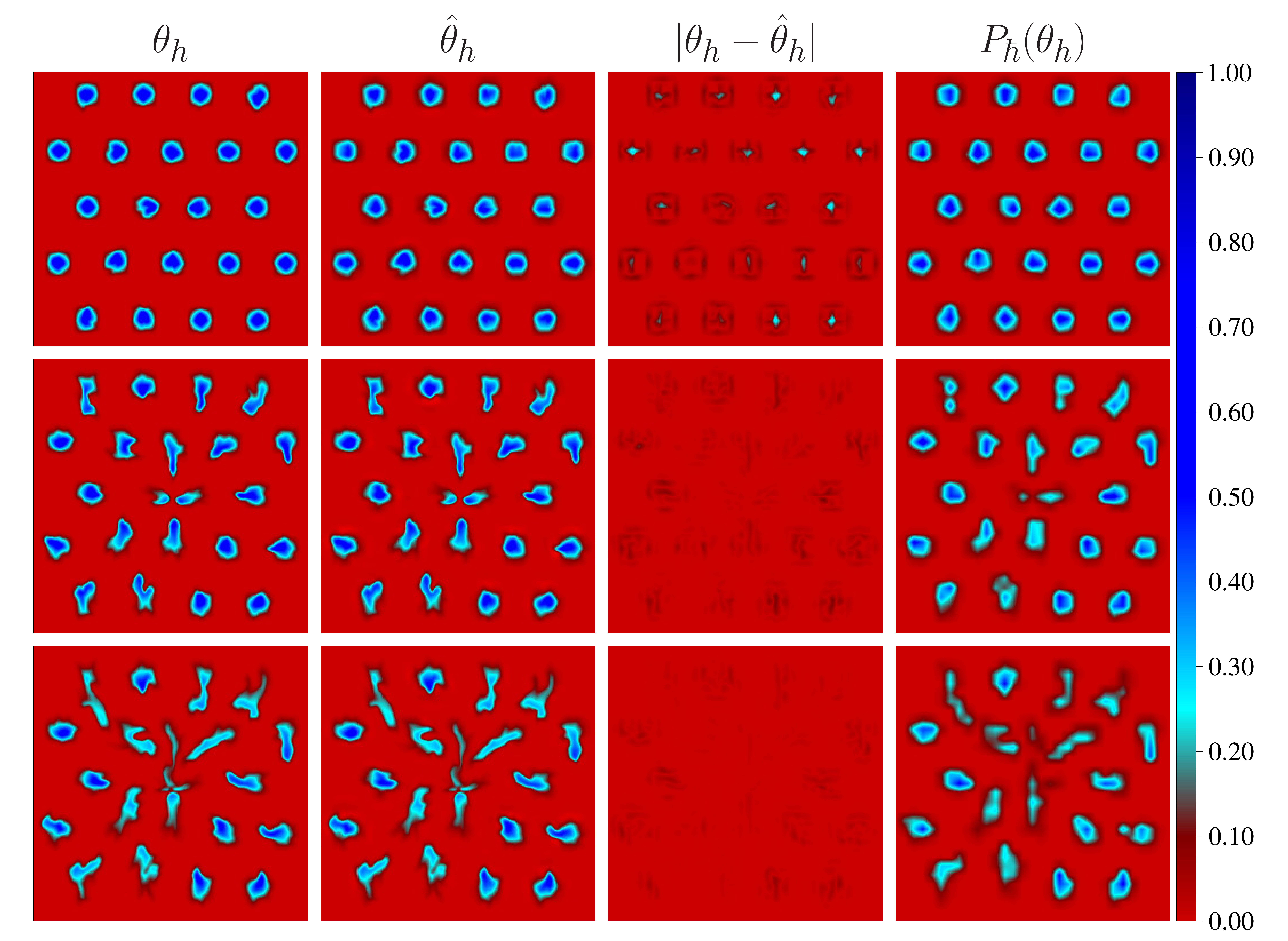}
\caption{Example 3: A comparison of reference concentration against $\hat{\theta}_h$, the magnitude of their difference and a bilinear interpolation of the sparse data, $P_\hbar(\theta_h)$, at time levels $0.002, 0.012$, and $0.024$ (arranged from top to bottom) is given. Values $\hbar = 1/30$ and $\mu = 1000$ were used to generate the profiles presented.}
\label{fig:e2conc}
\end{figure}

In \cref{fig:e2rerror} plots of $\text{R}(\hat{\theta}_h,t)$ for various choices of $\hbar$ are given with respect to time. In all of the examples presented the asymptotic convergence predicted by \Cref{thm:convergence} can be observed. However, there are some subtitles that merit discussion. 

If $\hbar$ is chosen to be too large, the quality of solutions obtained with the data assimilation algorithm is degraded. This result fits with intuition since an increase in the size of $\hbar$ corresponds to a decrease in the number of sparse data measurements used in the simulation. This implies that quality of the assimilated solution may be poor if insufficient information about the true solution is available. Correspondingly, as the choice of $\hbar$ is decreased the accuracy of the data assimilation algorithm is improved. 

As the value of $\hbar$ is decreased small fluctuations in the relative difference at fine time steps can be observed. We suspect that these fluctuations are a result of the interpolation of the sparse data in time. As part of our assumptions for the model we assume that sparse data measurements are available only at each coarse time step and perform a linear interpolation of this data at fine time steps. In \cref{fig:e2rerror} the value of $\text{R}(\hat{\theta}_h,t)$ at coarse time steps is highlighted using larger symbols. After each coarse time step it can be observed that the quality of the assimilated solution begins to improve before becoming less accurate with each fine time step. Essentially, at each coarse time step an update to the sparse data measurements pushes the assimilated solution back towards the true solution.   

By comparing \cref{fig:e2rerror} to \cref{fig:e2ierror,fig:e2ferror} some insight to the relationship between the assimilated solution and the quality of the sparse data used to construct it can be obtained. In \cref{fig:e2ierror} the relative difference between the interpolation of the sparse data, $P_{\hbar}(\theta)$, used to find the assimilated solution and the true solution are given for various choices of $\hbar$. Naturally, as $\hbar$ is decreased (i.e. more interpolation points are used) the interpolation becomes more accurate. Furthermore, if $\hbar$ is on the order of $h$ one observes that the quality of this interpolation and the assimilated solution are similar. However, when the value of $\hbar$ is large one observes that the assimilated solution is significantly more accurate than the bilinear interpolation used to construct it. \Cref{fig:e2ferror} highlights the differences between solutions obtained with the data assimilation algorithm and the interpolation of the sparse data used to run the data assimilation algorithm. Since the $P_\hbar(\theta_0)$ is used as the initial condition for $\hat{\theta}$ the value of $\widetilde{R}(\hat{\theta}_h; 0) = 0$. Then as time increases the difference between the two increases as the assimilated solution better approximates the true model solution. A visual comparison of these differences for a fixed choice of $\hbar$ is provided in \cref{fig:e2conc}.
 
 \begin{figure}[!t]
\centering
\begin{minipage}[c]{0.325\columnwidth}
\centering
\subfloat[]{\label{fig:e3rerror}\includegraphics[width=\columnwidth]{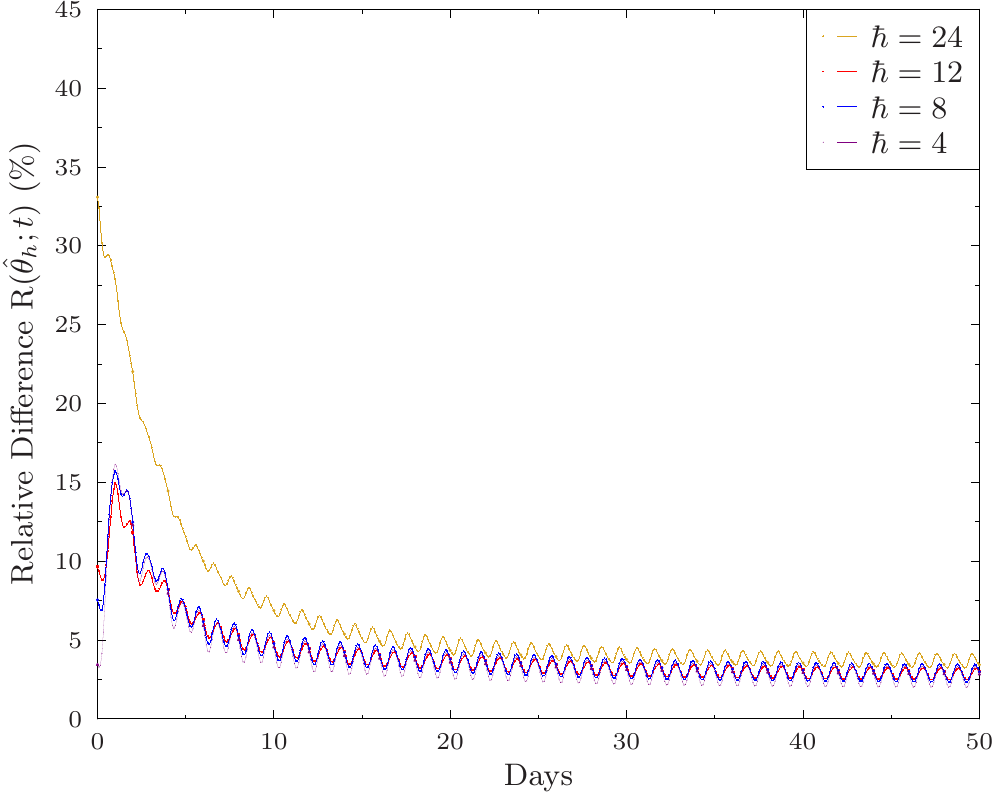}}
\end{minipage}
\begin{minipage}[c]{0.325\columnwidth}
\centering
\subfloat[]{\label{fig:e3ierror}\includegraphics[width=\columnwidth]{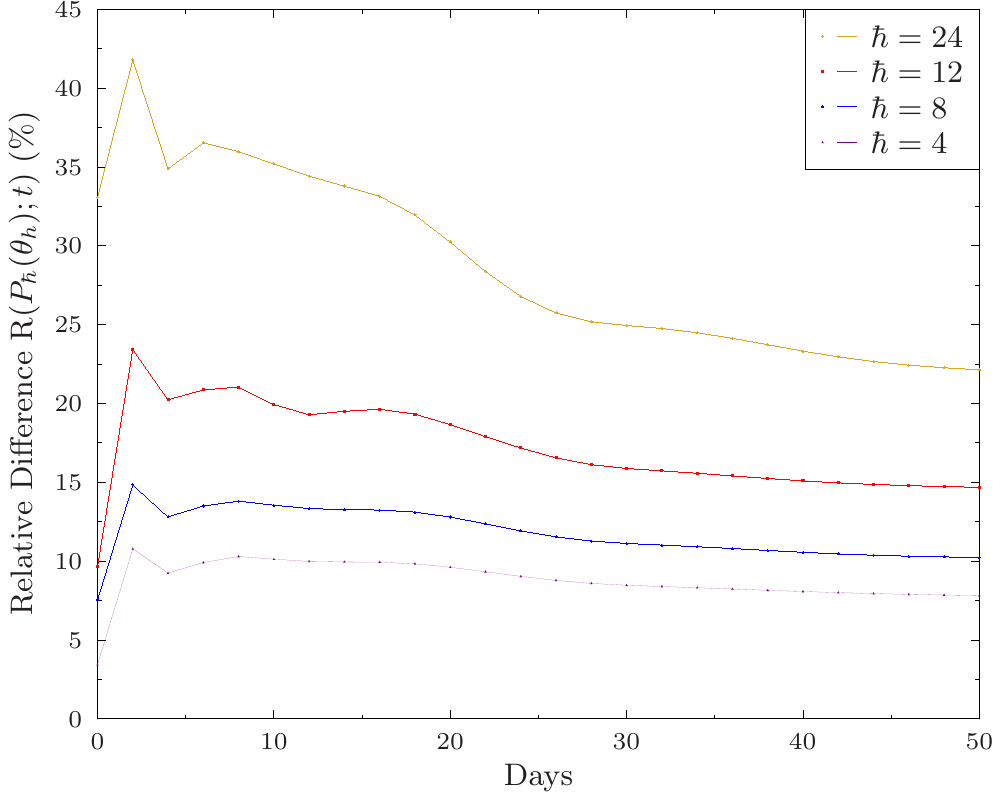}}
\end{minipage}
\begin{minipage}[c]{0.325\columnwidth}
\centering
\subfloat[]{\label{fig:e3ferror}\includegraphics[width=\columnwidth]{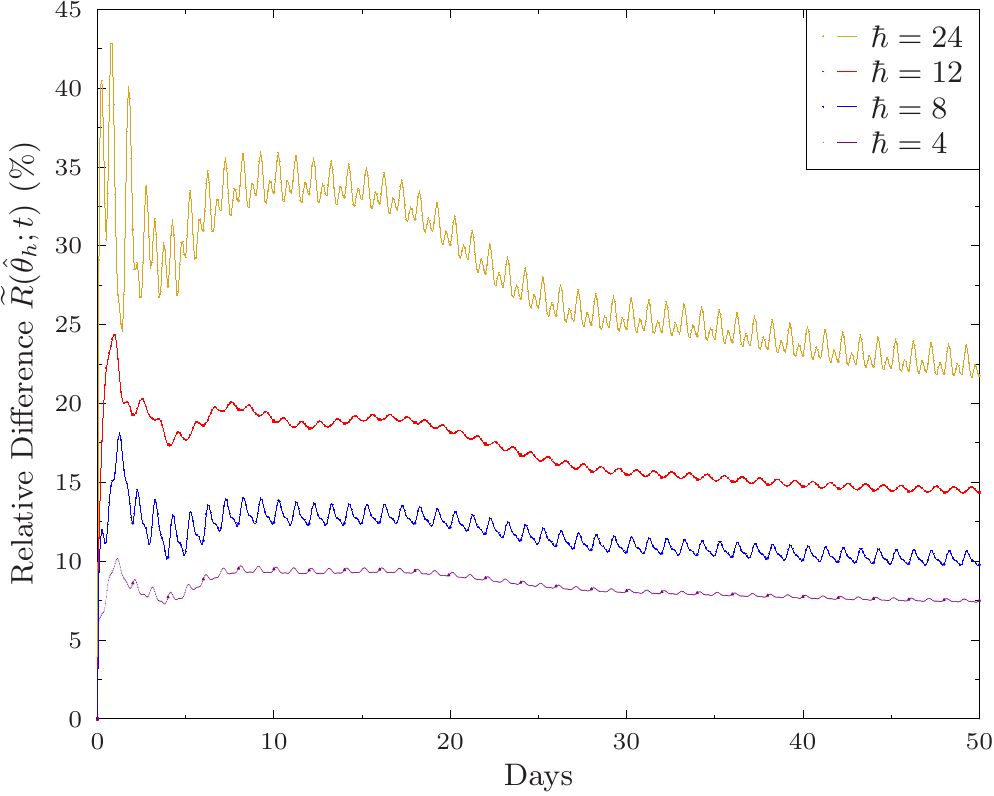}}
\end{minipage}
\caption{Example 4: \cref{fig:e3rerror} shows $\text{R}(\hat{\theta}_h; t)$, the relative difference of the solution obtained using the data assimilation algorithm for various choices of $\hbar$; \cref{fig:e3ierror} shows $\text{R}(P_\hbar(\theta_h); t)$, the quality of a bilinear interpolation of the sparse sample data, at coarse time steps for various choices of $\hbar$; \cref{fig:e3ferror} shows the behavior of relative difference over time, $\tilde{\text{R}}(\hat{\theta}_h; t)$, of the solution obtained using the data assimilation algorithm compared to a bilinear interpolation of the sparse data measurements for various choices of $\hbar$.}
\label{fig:example3}
\end{figure}

\subsection{Example 4}\label{sec:example4}   
In this example we consider an application of the proposed methodology for a model of the intrusion of saltwater into a fresh water aquifer. The mathematical model considered here is governed by \cref{eq:modelproblem}. The problem is posed on a square domain with side length of $240$ meters. The relative permeability is is given by $\kappa(\theta) = k(\boldsymbol{x}) / \nu(\theta)$, where the intrinsic permeability $k(\boldsymbol{x})$ is a scaled version of the profile in \cref{fig:perm} such that $k \in [10^{-9}, 10^{-7}]$. The fluid viscosity $\nu(\theta)$ is modeled with the quarter power mixing rule \cite{1963:Koval}, written as
\begin{equation}
\nu(\theta) = \left(\displaystyle \frac{\theta}{\sqrt[4]{\mu_s}} + \frac{1-\theta}{\sqrt[4]{\mu_w}}\right)^{-4},
\end{equation}
where $\mu_s = 0.00108 \,\text{Pa} \cdot \text{s}$ is the viscosity of the saltwater (or contaminant), and $\mu_w = 0.001 \,\text{Pa} \cdot \text{s}$ is the viscosity of the water (or resident liquid). The diffusion coefficient is $D = 0.00001$. Injection and discharge wells with similar structures to the one used in Example 3 are placed at points $(190,190)$ and $(50,50)$, respectively. The maximum value of $q_\text{in}$ is $0.0005~\text{s}^{-1}$, while for $q_\text{\text{out}}$ is $0.002~\text{s}^{-1}$. At the injection well, $\tilde{\theta}(t) = 0.45 \sin(B t) + 0.5$ where $B$ is chosen so that the period of $\tilde{\theta}(t)$ is one day. The behavior of this injection well is meant to mimic an intrusion of saltwater into the aquifer that depends on the tide of an ocean. Further, initially there are two pockets of saltwater already present in the upper left and lower right hand corners of $\Omega$. Discretization of $\Omega$ utilizes $240 \times 240$ rectangular elements, while the coarse time step is 2 days and fine time step is 2 hours. Updates to the Darcy velocity and the sparse data measurements are made at the end of each coarse time step. As in Example 3, the data assimilation methodology is fed with a numerical solution that is used as for the sparse data measurements and as a reference true solution. 

The same estimates used to evaluate the quality of the assimilated solution in Example 3 are also obtained for this numerical example and are presented in \cref{fig:example3} and \cref{fig:e3conc}. In \cref{fig:example3} a periodicity in the quality of the solution can be observed during the fine time steps. This source of error can be attributed almost entirely to the operator splitting. Since the source term has a period of one day, and each coarse time step has a length of two days the extracted Darcy velocities fail to capture the effect of periodicity of the injection well. When this source of error is taken into account many of the same comparisons and conclusions discussed in Example 3 can be made for for this example as well.

\begin{figure}[!t]
\centering
\includegraphics[width=1.0\columnwidth]{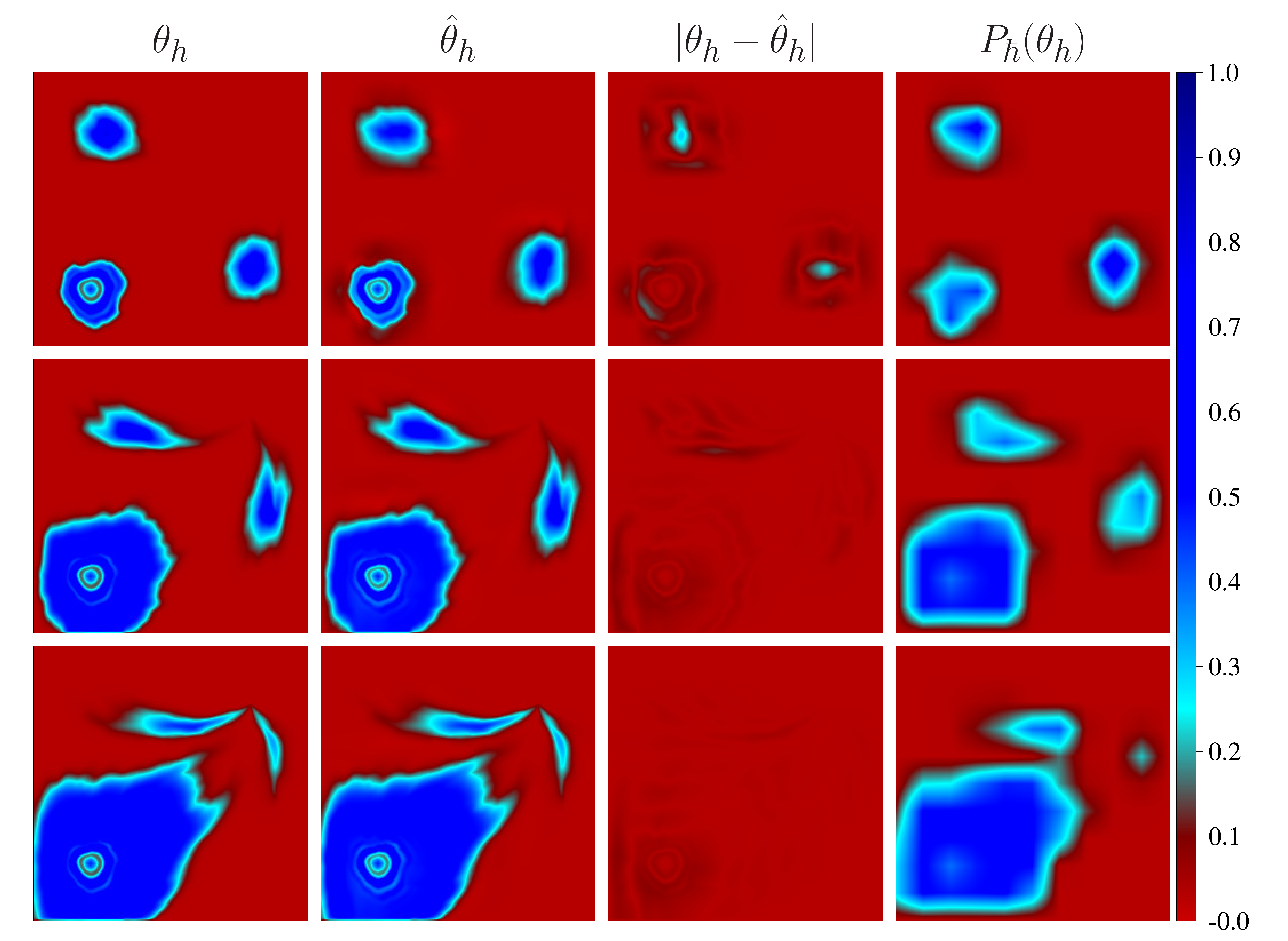}
\caption{Example 4: A comparison of reference concentration against $\hat{\theta}_h$, the magnitude of their difference and a bilinear interpolation of the sparse data, $P_\hbar(\theta_h)$ at time levels $0.002$, $0.012$, and $0.024$ (arranged from top to bottom). Values $\hbar = 40$ m and $\mu = 0.00001$ were used to generate the profiles.}
\label{fig:e3conc}
\end{figure}

\section{Conclusion}\label{sec:conclusion}
\setcounter{equation}{0}
\setcounter{figure}{0}
In this paper we have adapted the framework of a continuous data assimilation algorithm for dynamical systems to a model for the miscible flow of two fluids in a porous medium. The novelty of the proposed methodology is that it provides a way for obtaining accurate approximations of the true solution when nothing is known about the initial conditions of the system. Specifically, in situations where it is reasonable to assume that sparse spatial measurements of the concentration of either fluid can periodically be measured at sparse spatial locations this method can be used. A formal analysis of both the existence of weak solutions to the data assimilation model, and estimates for its associated error were performed. A series of numerical simulations was conducted to validate the conclusion of the mathematical analysis. It was demonstrated through these numerical studies that there exists a range of values for the relaxation parameter which can lead to a desirable rate of convergence. This range of values can be be further adjusted by adjusting the length scale of the sparse data measurements.

In our numerical experiments it can be observed that there exists an optimal choice for this parameter within its bounds. A direction for possible future work would be the development of a systematic way to determine a priori estimates for this optimal value. Furthermore, an introduction of stochastic noise into the sparse data measurements could be used to quantify the effect that measurement errors would have on the assimilated solution. Lastly, in the model considered it is assumed that the diffusion coefficient is only spatially dependent. More realistic models of miscible flow assume that the diffusion coefficient is also velocity dependent. It is suspected that similar estimates to the ones found here can be obtained for the more general model. However, a formal analysis must be performed to determine the convergence behavior of the continuous data assimilation algorithm in this setting.

\bibliographystyle{plain}
\bibliography{references}

\appendix
\setcounter{equation}{0}
\section{\texorpdfstring{$L^{\infty}$ Bounds on the True Concentration}{TEXT}}\label{sec:appendix1}
We begin by assuming that \cref{as:kappa,as:dif,as:g,as:f,as:q,as:init} are satisfied and that there exists a $\theta \in L^{\infty}_T (L^2(\Omega)) \cap L_T^{2} (H^1_{\text{D}})$ that satisfies \cref{eq:genvarform}. We write $\theta = \theta^+ - \theta^-$, where $\theta^{+} = \text{max}\{0,\theta\}$ and $\theta^{-} = \text{max}\{0,-\theta\}$ and
$$
\nabla \theta^+ =
\begin{cases}
\nabla \theta  &\text{ if } \theta>0,\\
0 & \text{ if }  \theta\le 0,
\end{cases}
\text{ and }
\nabla \theta^- =
\begin{cases}
0 &\text{ if } \theta>0,\\
-\nabla \theta & \text{ if }  \theta\le 0.
\end{cases}
$$

Since
$$\langle \theta^+ - \theta^-, -\theta^- \rangle = \| \theta^- \|^2
\text{ and }
\langle D\nabla (\theta^+ - \theta^-), -\nabla \theta^- \rangle = \langle D \nabla \theta^-, \nabla \theta^- \rangle,
$$
we may use $-\theta^{-}(\cdot, t)$ as a test function in \eqref{eq:subnuma} to get
\begin{equation}\label{eq:infbdtest}
 \frac{1}{2} \frac{\mathrm{d} }{\mathrm{d} t} \| \theta^- \|_{2}^2 +A(\theta^-, p,\theta^-; \theta) = -\langle f, \theta^- \rangle.
\end{equation} 
As a consequence of the positivity of $D$ outlined in \cref{as:dif}, \cref{eq:infbdtest} yields
\begin{equation}\label{eq:infbdtest2}
 \frac{1}{2} \frac{\mathrm{d} }{\mathrm{d} t} \| \theta^- \|_{2}^2  
 + \langle \theta^{-} \kappa(\theta) \nabla p , \nabla \theta^{-} \rangle
 + \langle f +  q \theta^{-},\theta^- \rangle  \le 0.
\end{equation}

Let now $\{ \theta_\ell \} \subset H^1_\text{D}$ be smooth and $\| \theta_\ell - \theta^- \|_1 \to 0$ as $\ell \to \infty$. Recent investigations confirming the existence of such a sequence can be seen for example in \cite{MR1884727,MR2261637,MR2852305}. By Cauchy-Schwarz inequality,
\begin{equation} \label{eq:A3}
\begin{aligned}
\big  | \langle \theta_\ell \kappa(\theta) \nabla p, \nabla \theta_\ell \rangle - 
\langle \theta^- \kappa(\theta) \nabla p, \nabla \theta^- \rangle \big |
&= \big  | \langle (\theta_\ell - \theta^-) \kappa(\theta) \nabla p, \nabla \theta_\ell \rangle + 
\langle \theta^- \kappa(\theta) \nabla p, \nabla (\theta_\ell - \theta^-) \rangle \big |\\
&\le \kappa^* \left( \| (\theta_\ell - \theta^-) \nabla p \| \, \| \nabla \theta_\ell \| +
\| \theta^-  \nabla p \| \, \| \nabla (\theta_\ell - \theta^-) \| \right).
\end{aligned}
\end{equation}
By H\"older's inequality and applying Meyer's estimate to $p$ and Sobolev embedding $H^1(\Omega) \hookrightarrow L^s(\Omega)$ for $s>2$ (see for example p. 85 of \cite{200379}),
\begin{equation} \label{eq:A4}
\begin{aligned}
&\| (\theta_\ell - \theta^-) \nabla p \| \le \| \theta_\ell - \theta^- \|_{0,s} \, \| \nabla p \|_{0,r} \le 
C_\text{emb} C(r) \| g \| \, \| \theta_\ell - \theta^- \|_1, \text{ and }\\
& \| \theta^-  \nabla p \| \le \| \theta^- \|_{0,s} \, \| \nabla p \|_{0,r} \le 
C_\text{emb} C(r) \| g \| \, \| \theta^- \|_1, \text{ where } \frac{1}{r} + \frac{1}{s} = \frac{1}{2}.
\end{aligned}
\end{equation}
Putting \cref{eq:A4} back to \cref{eq:A3} yields
\begin{equation}
\big  | \langle \theta_\ell \kappa(\theta) \nabla p, \nabla \theta_\ell \rangle - 
\langle \theta^- \kappa(\theta) \nabla p, \nabla \theta^- \rangle \big | \le
\kappa^* C_\text{emb} C(r) \| g \|  \, \| \theta_\ell \|_1 \, \| \theta_\ell - \theta^- \|_1,
\end{equation}
from which it is deduced that
\begin{equation} \label{eq:A6}
\big  | \langle \theta_\ell \kappa(\theta) \nabla p, \nabla \theta_\ell \rangle - 
\langle \theta^- \kappa(\theta) \nabla p, \nabla \theta^- \rangle \big | \to 0 \text{ as } \ell \to \infty.
\end{equation}
Moreover, due to its smoothness, $\nabla (\theta_\ell^2) = 2 \theta_\ell \nabla \theta_\ell$ and since $p$ satisfies \eqref{eq:subnumb}, one finds that 
\begin{equation}\label{eq:advecboundplus}
\langle \theta_\ell \kappa({\theta}) \nabla {p},  \nabla \theta_\ell \rangle =\frac{1}{2} \langle \kappa({\theta}) \nabla {p} , \nabla (\theta_\ell^2) \rangle = \frac{1}{2} \langle g, (\theta_\ell)^2 \rangle,
\end{equation}
where the last line is justified because $\theta_\ell^2 \in H^1_\text{D}$, which is the case  due to the smoothness of $\theta_\ell \in H^1_\text{D}$. Furthermore, by Cauchy-Schwarz inequality and Sobolev embedding $H^1(\Omega) \hookrightarrow L^s(\Omega)$ for $s>2$, 
\begin{equation}
\begin{aligned}
| \langle g, (\theta_\ell)^2 \rangle - \langle g, (\theta^-)^2 \rangle | &= 
| \langle g, (\theta_\ell + \theta^-)(\theta_\ell - \theta^-) \rangle  |\\
&\le \| g \| \, \| (\theta_\ell + \theta^-)(\theta_\ell - \theta^-) \|\\
&\le \| g \| \, \| (\theta_\ell + \theta^-) \|_{0,\zeta} \, \|\theta_\ell - \theta^- \|_{0,\nu}, \text{ with } \frac{1}{\zeta} + \frac{1}{\nu} = \frac{1}{2}\\
&\le C^2_\text{emb} \| g \| \, \| (\theta_\ell + \theta^-) \|_1 \, \|\theta_\ell - \theta^- \|_1,
\end{aligned}
\end{equation}
from which it is deduced that
\begin{equation} \label{eq:A9}
| \langle g, (\theta_\ell)^2 \rangle - \langle g, (\theta^-)^2 \rangle | \to 0 \text{ as } \ell \to \infty.
\end{equation}
Combination of \cref{eq:A6}, \cref{eq:advecboundplus}, and \cref{eq:A9} yields
\begin{equation} \label{eq:A10}
\langle \theta^{-} \kappa(\theta) \nabla p , \nabla \theta^{-} \rangle = \lim_{\ell \to \infty}
\langle \theta_\ell \kappa(\theta) \nabla p, \nabla \theta_\ell \rangle = \lim_{\ell \to \infty}
\langle g, (\theta_\ell)^2 \rangle = \langle g, (\theta^-)^2 \rangle.
\end{equation}
 Inserting \cref{eq:A10} into \cref{eq:infbdtest2} gives 
\begin{equation}\label{eq:boundeq}
 \frac{1}{2} \frac{\mathrm{d} }{\mathrm{d} t} \| \theta^{-} \|_{2}^2  + \langle f + q \theta^{-} + \frac{1}{2} g \theta^{-}, \theta^{-} \rangle \le 0.
\end{equation}
By assumption \cref{eq:infbdassumptions} we observe that $\langle f + q \theta^{-} +\frac{1}{2} g \theta^{-} , \theta^{-} \rangle \ge 0$.
By applying this fact to \cref{eq:boundeq}, we integrate the resulting inequality over $(0,s) \subseteq [0,T]$ to get
$\| \theta^{-}(\cdot, s) \|_{2}^2 - \| \theta^{-}(\cdot,0) \|_{2}^2 \le 0$.
By assumption $\theta_0(\boldsymbol{x})>0$ for almost every $\boldsymbol{x} \in \Omega$ so one deduces that $\theta^-(\boldsymbol{x},s) = 0$ for almost every $(\boldsymbol{x},s) \in \Omega \times [0,T]$. Consequently, $0 \le \theta(\boldsymbol{x},t)$ for almost every $(\boldsymbol{x},t) \in \Omega \times [0,T]$.

To establish the upper bound, notice that by setting $\theta = (\theta - 1) + 1 = \vartheta + 1$, it is obvious that $\partial_t \theta = \partial_t \vartheta$ and 
\begin{equation}
A(1, p, \psi ; \theta) = \langle \kappa(\theta) \nabla p, \nabla \psi \rangle +
\langle q, \psi \rangle = \langle g+q,\psi \rangle,
\end{equation}
so that
\begin{equation} \label{eq:A13}
A(\theta,p,\psi ; \theta) = A(\vartheta, p, \psi ; \theta) + A(1, p, \psi ; \theta)  = A(\vartheta, p, \psi ; \theta) + \langle g+q,\psi \rangle.
\end{equation}
 Thus, in a similar fashion to what is done before, we use $\psi = \vartheta^+ = \text{max} \{ \vartheta, 0 \} \in H^1_\text{D}$ in \cref{eq:subnuma} and use \cref{eq:A13} to get
\begin{equation}
 \frac{1}{2} \frac{\mathrm{d} }{\mathrm{d} t} \| \vartheta^+ \|_{2}^2 +A(\vartheta^+, p,\vartheta^+; \theta) + \langle g+q,\vartheta^+ \rangle = \langle f, \vartheta^+ \rangle,
\end{equation} 
and thus
\begin{equation} \label{eq:A15}
\frac{1}{2} \frac{\mathrm{d} }{\mathrm{d} t} \| \vartheta^+ \|_{2}^2   + \langle
q \vartheta^{+} + \frac{1}{2} g \vartheta^{+}, \vartheta^{+} \rangle +
 \langle g + q  - f , \vartheta^+ \rangle \le 0.
\end{equation} 
By \cref{eq:infbdassumptions},
$\langle
q \vartheta^{+} + \frac{1}{2} g \vartheta^{+}, \vartheta^{+} \rangle +
 \langle g + q  - f , \vartheta^+ \rangle  \ge 0$, which after integration of \cref{eq:A15} over $(0,s) \subseteq [0,T]$ gives $\| \vartheta^+(\cdot, s) \|_{2}^2 - \| \vartheta^+(\cdot,0) \|_{2}^2 \le 0$.
By assumption $\theta_0(\boldsymbol{x})<1$ for almost every $\boldsymbol{x} \in \Omega$ so one deduces that $\vartheta^+(\dot,s) = 0$ for all $s \in [0,T]$. Consequently, $\theta(\boldsymbol{x},t) \le 1$ for almost every $(\boldsymbol{x},t) \in \Omega \times [0,T]$.

\end{document}